\documentclass[11pt, oneside]{article}
\usepackage{amsmath}
\usepackage{amssymb}
\usepackage{latexsym}
\usepackage{theorem}
\usepackage{mathrsfs}
\usepackage[all]{xy}
\usepackage{url}

\mathsurround=1pt

\setlength{\unitlength}{1cm}

\setlength{\textheight}{22cm}
\setlength{\voffset}{-1cm}

\setlength{\textwidth}{14.5cm}
\setlength{\oddsidemargin}{7.5mm}
\setlength{\evensidemargin}{0mm}

\linespread{1}
\setlength{\parindent}{0cm}
\addtolength{\parskip}{2mm}

\DeclareMathOperator{\cof}{-cof}
\DeclareMathOperator{\inj}{-inj}
\DeclareMathOperator{\pic}{Pic}

\newcommand{\smashprod}{\wedge}

\newcommand{\co}{\colon \!}

\newcommand{\zz}{{ \mathbb{Z} }}

\newcommand{\qq}{{ \mathbb{Q} }}

 \DeclareMathOperator{\id}{Id}

\DeclareMathOperator{\h}{H} 
\DeclareMathOperator{\Ho}{Ho}

\newcommand{\abcat}{{ \mathcal{G} }}

\DeclareMathOperator{\homSSS}{Hom}
\renewcommand{\hom}{\homSSS}
\DeclareMathOperator{\LimSSS}{lim}
\renewcommand{\lim}{\LimSSS}


\newtheorem{theorem}{Theorem}[section]
\newtheorem{proposition}[theorem]{Proposition}
\newtheorem{corollary}[theorem]{Corollary}
\newtheorem{lemma}[theorem]{Lemma}

\newtheorem{definition}[theorem]{Definition}

\newtheorem{rmk}[theorem]{Remark}

\newenvironment{proof}[1][Proof]{\vskip -0.25cm \textbf{#1} }
{\hfill \rule{0.5em}{0.5em}}




\title{Monoidality of Franke's Exotic Model}
\author{David Barnes\footnote{Partially supported by NSERC}$\phantom{a}^1$\\ 
\footnotesize{The University of Western Ontario} \\ 
\footnotesize{Department of Mathematics} \\ 
\footnotesize{London, Ontario N6A 5B7} \\ 
\footnotesize{Canada} \\
\footnotesize{d.j.barnes@shef.ac.uk} 
\and 
Constanze Roitzheim\thanks{Supported by EPSRC grant EP/G051348/1,  corresponding author}  \\
\footnotesize{The University of Glasgow} \\
\footnotesize{Department of Mathematics} \\ 
\footnotesize{University Gardens, Glasgow G12 8QW} \\ 
\footnotesize{United Kingdom} \\ 
\footnotesize{c.roitzheim@maths.gla.ac.uk} }
\date{July 26, 2011}

\begin{document}
\maketitle
\begin{abstract}
\noindent We discuss the monoidal structure on Franke's algebraic
model for the $K_{(p)}$-local stable homotopy category at odd primes
and show that its Picard group is isomorphic to the integers. \\
\phantom{f}\\
\noindent {MSC: 55P42; 55P60; 55U35} \\
{Keywords: Stable Homotopy Theory; Localizations; Model Categories}
\end{abstract}

\section*{Introduction}

\footnotetext[1]{Current Address: The University of Sheffield, Hicks Building, Sheffield  S3 7RH, United Kingdom}

A well-established method to study the structure of the stable homotopy category $\Ho(\mathcal{S})$ is chromatic filtration. This concerns Bousfield localisation with respect to homology theories $E(n)$, $n \in \mathbb{N}$ for a fixed prime $p$ absent from notation. The resulting ``chromatic layers'' $\Ho(L_n \mathcal{S})$ provide a better and better approximation of $\Ho(\mathcal{S})$ at $p$ as $n$ increases.

For $n=1$, $E(1)$ is the Adams summand of $p$-local complex $K$-theory, so $\Ho(L_1 \mathcal{S})$ is the $K_{(p)}$-local stable homotopy category. Since $K$-theory has been studied extensively, we can make use of a wealth of tools to study this first chromatic layer. Startlingly, the behaviour of $\Ho(L_1 \mathcal{S})$ at odd primes differs significantly from $p=2$. For $p=2$ all higher homotopy structure of $\Ho(L_1 \mathcal{S})$ is encoded in its triangulated structure, meaning that $\Ho(L_1 \mathcal{S})$ is \emph{rigid} at $p=2$, see \cite{Roi07}. As a consequence, $\Ho(L_1 \mathcal{S})$ at $p=2$ cannot be described by an algebraic model. For odd primes however, the situation is completely different.

For odd primes Jens Franke constructed an algebraic model category $C^{(T,N)}(\mathcal{A})$ whose homotopy category $D^{(T,N)}(\mathcal{A})$ is equivalent to $\Ho(L_1 \mathcal{S})$, \cite{Fra96}. But the underlying models $L_1 \mathcal{S}$ and $C^{(T,N)}(\mathcal{A})$ are not Quillen equivalent by \cite{Fra96} and \cite{Roi08}. As $C^{(T,N)}(\mathcal{A})$ has such a different homotopical behaviour from the standard model $L_1 \mathcal{S}$, it is called an \emph{exotic model}.

Although some general differences between $p=2$ and $p>2$ have been studied in \cite[Section 6]{Roi07}, it is still mysterious how this exotic model came along. Furthermore, it is still almost entirely unknown for which $n$ and $p$,  $\Ho(L_n \mathcal{S})$ possesses exotic models and if it does, how many. It is not even known if Franke's model is the only algebraic model in its range.

One structural tool that has not yet been made use of is monoidality. We are going to focus on it in this paper. After defining a monoidal product on 
$C^{(T,N)}(\mathcal{A})$, we find that Franke's model structure on this category is not monoidal. This means that it does not induce a monoidal structure on $D^{(T,N)}(\mathcal{A})$. So our goal is to construct a new model structure on $C^{(T,N)}(\mathcal{A})$ that is Quillen equivalent to Franke's model while also being compatible with the monoidal product.

Furthermore, we want the newly defined derived product $\wedge^L_{\mathbb{P}\mathcal{I}}$ on $D^{(T,N)}(\mathcal{A})$ to interact reasonably with the smash product $\wedge^L$ on $\Ho(L_1 \mathcal{S})$. Let
\[
\mathcal{R}: D^{(T,N)}(\mathcal{A}) \longrightarrow \Ho(L_1 \mathcal{S})
\]
denote Franke's triangulated equivalence. In \cite{Gan07}, Nora Ganter constructed a natural isomorphism 
\[
\mathcal{R}(C_* \otimes^L_{E(1)_*} D_*) \cong \mathcal{R}(C_*) \wedge^L \mathcal{R}(D_*).
\]
However, that paper does not accurately define the non-derived tensor
product of quasi-periodic cochain complexes so the definition of 
the derived tensor product is not complete. 
 
Our construction is compatible with the assumptions needed for Ganter's result. Hence, it closes a gap in \cite{Gan07} and so allows us to use the isomorphism
\[
\mathcal{R}(C_* \wedge^L_{\mathbb{P}\mathcal{I}} D_*) \cong \mathcal{R}(C_*) \wedge^L \mathcal{R}(D_*)
\]
to relate our new monoidal product $\wedge^L_{\mathbb{P}\mathcal{I}}$ to $\wedge^L$. It should be noted that it is unknown if $\mathcal{R}$ is associative for $p>5$ and it is definitely not associative for $p=3$.

Even further, combining this with work of Hovey and Sadofsky \cite{HovSad99}, we can read off the Picard group of $D^{(T,N)}(\mathcal{A})$. We hope that our results contribute to understanding the concept of exotic models in the future. 

\subsubsection*{Organisation}
In Section \ref{adjoints} we revise the concept of quasi-periodic chain complexes $C^{(T,N)}(\mathcal{G})$. Here, $\mathcal{G}$ denotes a Grothendieck abelian category, $T$ a self-equivalence of $\mathcal{G}$ and $N\ge 0$ the periodicity index. Quasi-periodic chain complexes form the basis of Franke's construction. In particular, they are chain complexes in $\mathcal{G}$. We recall how to create model structures on 
$C^{(T,N)}(\mathcal{G})$ using the forgetful functor to chain complexes $C(\mathcal{G})$. 

In Section \ref{modules}, we explain how $C^{(T,N)}(\mathcal{G})$ can be described as a category of modules over a ring object in $C(\mathcal{G})$. The ring object will be the ``periodified'' unit $\mathbb{P}\mathcal{I}$. 

Section \ref{hopfalgebroids} recalls some definitions and properties about comodules over Hopf algebroids. They are used in Section \ref{Franke}, which concerns Franke's category. Here, we specify the Grothendieck abelian category $\mathcal{A}$, the self-equivalence $T$ and period $N=1$. The abelian category $\mathcal{A}$ is equivalent to $E(1)_*E(1)$-comodules. We then describe the resulting model structure and some of its properties. 

In Franke's case, $\mathcal{A}$ does not have enough projectives, only enough injectives. But the injective model structure is not monoidal. A step towards a solution is the \emph{relative projective model structure} described in Section \ref{relativeprojective}. This was first introduced by Christensen and Hovey in \cite{ChrHov02}. The induced model structure on $C^{(T,1)}(\mathcal{A})$ is monoidal but is not Quillen equivalent to Franke's model as it does not have enough weak equivalences.

We use the above to construct a \emph{quasi-projective model structure} in Section \ref{quasiprojective}. For the construction, we formally add weak equivalences to the relative projective model structure. Eventually we arrive at a model category that is Quillen equivalent to Franke's model and is a monoidal model category. 

Finally, in Section \ref{Picard}, we relate our result to Ganter's theorem and compute the Picard group of the exotic model, $\pic(D^{(T,1)}(\mathcal{A}))$. We further place it in context with other results about the $E(n)$-local stable homotopy category, hopefully shedding some light on the existence of exotic models versus rigidity. 

\bigskip
We would like to thank Andy Baker, Dan Christensen, Nora Ganter and Uli Kr\"ahmer for motivating
discussions.

\section{Quasi-periodic chain complexes}\label{adjoints}

We use $\abcat$ for a general abelian category and reserve
$\mathcal{A}$ for Franke's category of Section \ref{Franke}. For
model structure purposes we will usually assume that $\abcat$ is a
Grothendieck abelian category, which is our reason for choosing this
letter. We will always assume that we have a self-equivalence $T \co
\abcat \to \abcat$ and we further assume that $\abcat$ has all small
limits and colimits.

In this section, we introduce the category $C^{(T,N)}(\abcat)$ of
quasi-periodic chain complexes of period $N$, i.e. chain complexes
with values in $\abcat$ that are periodic
up to a ``twist'' by $T$. Given a model
structure on chain complexes $C(\abcat)$, we are then going to
discuss how $C^{(T,N)}(\abcat)$ inherits a model structure from
$C(\abcat)$.

\begin{definition}
The category, $C^{(T,N)}(\abcat)$, of \textbf{quasi-periodic chain complexes} (or
\textbf{twisted chain complexes}) in $\abcat$,
has objects the class of chain complexes $C(\abcat)$ in $\abcat$
which have a specified isomorphism $ \alpha_C: T(C_*)
\longrightarrow C[N]_* $. A morphism is then a chain map which
commutes with the given isomorphisms as above.
\end{definition}

Here, $C[N]_*=C_{*-N}$, the differential on $C[N]$ is
$d_{C[N]}=(-1)^{N}d_C$. For further details on this category see
\cite[Example 1.3.3]{Fra96} or \cite[Subsection 2.2]{Roi08}.

The forgetful functor
$U: C^{(T,N)}(\abcat ) \longrightarrow C(\abcat )$
from quasi-periodic chain complexes to chain complexes on
$\abcat$ has both a left and a right adjoint. We are most
interested in the left adjoint, which we call
\textbf{periodification}.  Given a chain complex $M$, we define
$$\mathbb{P}M = \bigoplus\limits_{k \in\zz} T^k M[-kN].$$
Thus $\mathbb{P}M_n = \bigoplus\limits_{k \in\zz} T^k M_{n+kN}.$
The differential on summand $T^k M_{n+kN}$ is given by
$$
(-1)^{kN} T^k d_{n+kN} \co T^k M_{n+kN} \to T^k M_{n+kN-1}.
$$
This is a functor, the action on maps being to send $g$ to that map
which on level $n$ and summand $k$ is given by $T^k g_{n+kN}$. Furthermore $\mathbb{P}M$
is a quasi-periodic chain complex, the quasi-periodicity isomorphism is the following
composite.
$$
\begin{array}{rcl}
T\mathbb{P}M_n = T\bigoplus\limits_{k \in\zz} T^k M_{n+kN} & \cong &
\bigoplus\limits_{k \in\zz} T^{k+1} M_{n+kN}\\
& = & \bigoplus\limits_{k \in\zz} T^{k+1} M_{n+(k+1)N-N} \\
& = & \bigoplus\limits_{k \in\zz} T^{k+1} M[N]_{n+(k+1)N} \\
& = & \mathbb{P}M[N]_n \\
\end{array}
$$

\begin{lemma}
The functor $\mathbb{P}$ is the left adjoint to the forgetful
functor $U$ from quasi-periodic chains on $\abcat$ to chains on
$\abcat$.
\end{lemma}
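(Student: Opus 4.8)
The plan is to exhibit a natural bijection
\[
\hom_{C^{(T,N)}(\abcat)}(\mathbb{P}M, C) \cong \hom_{C(\abcat)}(M, UC)
\]
for every chain complex $M$ in $\abcat$ and every quasi-periodic chain complex $C$. First I would define the unit of the adjunction $\eta_M \co M \to U\mathbb{P}M$ to be the inclusion of the $k=0$ summand $T^0 M[0] = M$ into $\mathbb{P}M = \bigoplus_{k \in \zz} T^k M[-kN]$; one checks directly from the formula for the differential on $\mathbb{P}M$ that on the $k=0$ summand the differential is $d_M$, so $\eta_M$ is a chain map, and naturality in $M$ is immediate from the description of $\mathbb{P}$ on morphisms. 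The claim is then that precomposition with $\eta_M$ induces the desired bijection.

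The key step is to produce the inverse. Given a chain map $f \co M \to UC$, I would define $\widehat{f} \co \mathbb{P}M \to C$ on the $k$-th summand by the composite
\[
T^k M_{n+kN} \xrightarrow{\; T^k f_{n+kN} \;} T^k C_{n+kN} \xrightarrow{\; T^k(\alpha_C^{-1})^{?} \;} C_n,
\]
where the second map is built by iterating (and inverting) the quasi-periodicity isomorphism $\alpha_C \co T(C_*) \to C[N]_*$: for $k \geq 0$ one applies $\alpha_C$ repeatedly to identify $T^k C_{n+kN}$ with $C_n$, and for $k < 0$ one uses $\alpha_C^{-1}$. I would record once and for all the formula $\alpha_C^{(k)} \co T^k C_* \xrightarrow{\cong} C[kN]_*$ obtained by composing twists of $\alpha_C$, together with the relation $\alpha_C^{(k+1)} = \alpha_C^{(k)}[N] \circ T^k \alpha_C$ which keeps the signs consistent. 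Then I would verify three things: that $\widehat{f}$ is a chain map (here the sign $(-1)^{kN}$ in the differential on the $k$-th summand of $\mathbb{P}M$ is exactly what is needed to match the sign $(-1)^{N}$ appearing in $d_{C[N]}$ after applying $\alpha_C^{(k)}$), that $\widehat{f}$ commutes with the quasi-periodicity isomorphisms — which follows because the quasi-periodicity isomorphism of $\mathbb{P}M$ was defined precisely by shifting the summand index $k \mapsto k+1$, so that $\alpha_C^{(k)}$ and $\alpha_C^{(k+1)}$ are intertwined correctly — and that $f \mapsto \widehat{f}$ is natural in both variables.

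Finally I would check that the two assignments are mutually inverse. One direction is trivial: $\widehat{f} \circ \eta_M$ restricts $\widehat{f}$ to the $k=0$ summand, where $\alpha_C^{(0)} = \id$, giving back $f$. For the other direction, given $g \co \mathbb{P}M \to C$, the component of $g$ on the $k$-th summand is forced: quasi-periodicity compatibility of $g$ means $g$ commutes with the index-shift isomorphisms, so $g$ on summand $k$ equals $\alpha_C^{(k),-1} \circ T^k(g|_{k=0}) = \alpha_C^{(k),-1} \circ T^k(\widehat{g \circ \eta_M})$; hence $g = \widehat{g \circ \eta_M}$. I expect the main obstacle to be bookkeeping of signs and of the iterated twisting isomorphism $\alpha_C^{(k)}$ for negative $k$: one must be careful that the $(-1)^{kN}$ in $d_{\mathbb{P}M}$, the $(-1)^N$ in $d_{C[N]}$, and the direction in which $\alpha_C$ is iterated all fit together so that $\widehat{f}$ is genuinely a chain map and the two constructions are genuinely inverse. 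None of this is conceptually hard, but it is the only place where an error could creep in.
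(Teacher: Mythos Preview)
Your proposal is correct and follows essentially the same approach as the paper: both construct the bijection by restricting a quasi-periodic map $\mathbb{P}M \to C$ to the $k=0$ summand, and conversely extend a chain map $M \to C$ to the $k$-th summand via $T^k f$ followed by the iterated periodicity isomorphism $\alpha_C^{(k)}$. The only cosmetic difference is that you package the restriction direction as precomposition with an explicit unit $\eta_M$, and in your final paragraph the map should be $\alpha_C^{(k)}$ rather than its inverse (matching your own earlier formula for $\widehat{f}$).
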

\begin{proof}
Let $f \co \mathbb{P}M \to X$ be a quasi-periodic chain map. Let
$f^n_k$ be the map from the $k$-summand of $\mathbb{P}M_n$ to $X_n$,
so
$
f^n_k \co T^k M_{n+kN} \to X_n.
$
The collection $f^n_0 \co M_n \to X_n$ defines a chain map $\hat{f}
\co M \to X$.

For the inverse, let $g \co M \to X$ be a chain map.
Define a collection $g^n_k$ by the following composite, where the
second map is coming from the quasi-periodic structure of $X$.
$$
T^k M_{n+kN} \xrightarrow{T^k g_{n+kN}} T^k X_{n+kN}
\xrightarrow{(\alpha_X)^k} X_n
$$
We then define a map $\tilde{g} \co \mathbb{P}M \to X$, which on
summand $k$ is $g^n_k$. To see that $\tilde{g}$ is a quasi-periodic map, we
note that the following diagram commutes
\[
\xymatrix{ T\mathbb{P} M \ar[r]^{T\tilde{g}}\ar[d]^{\alpha_{\mathbb{P}M}} & TX \ar[d]^{\alpha_X} \\
\mathbb{P} M[N] \ar[r]^{\tilde{g}[N]} & X[N] }
\]
because the diagram below commutes.
$$
\xymatrix@R=20pt@C=50pt{ TT^k M_{n+kN} \ar[r]^{TT^k g_{n+kN}}
\ar[d]^= & TT^k X_{n+kN} \ar[r]^{(\alpha_X)^k} \ar[d]^= &
T X^{n}  \ar[d]^{\alpha_X} \\
T^{k+1} M_{n+kN} \ar[r]^{T^{k+1} g_{n+kN}} & T^{k+1} X_{n+kN}
\ar[r]^{(\alpha_X)^{k+1}} &
X_{n-N}  \\
}
$$
It is simple to check that $\tilde{g}$ is compatible with the
differentials. That $\hat{\tilde{g}}=g$ is immediate. One can also
check that $\tilde{\hat{f}}=f$, by noting that the diagram below
commutes. This holds since $f$ is quasi-periodic and the top path is
$\tilde{\hat{f}}_k^n$ whereas the lower path is $f^n_k$.
$$
\xymatrix{ T^k M_{n+kN} \ar[r]^{T^k f^{n-kN}_0} \ar[d]^= &
T^k X_{n+kN}  \ar[d]^{(\alpha_X)^{k}} \\
T^k M_{n+kN} \ar[r]^{f^{n}_k}  &
X^{n}  \\
}$$
\end{proof}

A  dual argument will show that the forgetful functor
has a right adjoint $\mathbb{R}$, which sends a chain complex $M$
to the quasi-periodic chain complex
$\mathbb{R}M = \prod_{k \in\zz} T^k M[-kN]$.

\begin{proposition}\label{create}
Assume that there is a cofibrantly generated model structure on $C(\abcat)$
and that $T$ is a left Quillen functor.
Then the forgetful functor
\[
U \co C^{(T,N)}(\abcat) \longrightarrow C(\abcat)
\]
creates a model structure on $C^{(T,N)}(\abcat)$. That is, there is a model structure on
the category of quasi-periodic chain complexes, $C^{(T,N)}(\abcat)$,
 where a map $f$ is a weak equivalence or a
fibration if and only if $Uf$ is so in $C(\abcat)$.
\end{proposition}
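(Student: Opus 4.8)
The plan is to obtain the model structure on $C^{(T,N)}(\abcat)$ by \emph{transferring} the given cofibrantly generated model structure on $C(\abcat)$ along the adjunction $\mathbb{P} \dashv U$ of the previous lemma, using the standard lifting theorem for cofibrantly generated model categories. If $I$ and $J$ denote sets of generating cofibrations and generating trivial cofibrations of $C(\abcat)$, then $\mathbb{P}I$ and $\mathbb{P}J$ will be the generating cofibrations and trivial cofibrations of the new structure, provided three conditions hold: that $C^{(T,N)}(\abcat)$ is bicomplete, that $\mathbb{P}I$ and $\mathbb{P}J$ permit the small object argument, and that $U$ carries every relative $\mathbb{P}J$-cell complex to a weak equivalence in $C(\abcat)$.

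The first two conditions are essentially bookkeeping. For bicompleteness: since $T$ is an equivalence it preserves all small limits and colimits, and the shift $[N]$ is an automorphism of $C(\abcat)$, so a (co)limit of quasi-periodic chain complexes is formed by taking the (co)limit of the underlying complexes and equipping it with the (co)limit of the structure isomorphisms $\alpha_C$. Thus $U$ creates, hence preserves, all small (co)limits; in particular it preserves filtered colimits. Smallness then follows formally: for a domain $A$ of a map in $I$ or $J$ and a filtered system $\{X_\beta\}$ in $C^{(T,N)}(\abcat)$ one has $\hom(\mathbb{P}A, \colim X_\beta) \cong \hom(A, \colim U X_\beta) \cong \colim \hom(A, U X_\beta) \cong \colim \hom(\mathbb{P}A, X_\beta)$ by the adjunction, preservation of filtered colimits by $U$, and smallness of $A$; so $\mathbb{P}A$ is small and $\mathbb{P}I$, $\mathbb{P}J$ permit the small object argument.

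The substantive step is the acyclicity condition, and I would prove it by identifying the composite $U\mathbb{P}$. By construction $U\mathbb{P}M = \bigoplus_{k \in \zz} T^k M[-kN]$ naturally in $M$. Each $T^k$ preserves trivial cofibrations because $T$ is left Quillen, each shift $[-kN]$ is a Quillen automorphism of $C(\abcat)$ and so preserves trivial cofibrations, and an arbitrary coproduct of trivial cofibrations is a trivial cofibration in $C(\abcat)$; hence $U\mathbb{P}$ sends trivial cofibrations to trivial cofibrations, and in particular $U\mathbb{P}J$ consists of trivial cofibrations. Since $U$ preserves colimits it carries a relative $\mathbb{P}J$-cell complex to a transfinite composite of pushouts of maps in $U\mathbb{P}J$, that is, to a trivial cofibration of $C(\abcat)$, which is a weak equivalence. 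This is exactly the condition required, and it is the only point in the argument with genuine content: the explicit formula for $U\mathbb{P}$ and the observation that it preserves trivial cofibrations are where the hypothesis ``$T$ is left Quillen'' (together with the elementary fact that shifts are Quillen automorphisms of $C(\abcat)$) gets used.

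With the three conditions in hand, the lifting theorem delivers all the model category axioms, giving a cofibrantly generated model structure on $C^{(T,N)}(\abcat)$ in which $f$ is a weak equivalence exactly when $Uf$ is one, and in which fibrations are the maps with the right lifting property against $\mathbb{P}J$. By the adjunction $\mathbb{P} \dashv U$, a map $f$ has the right lifting property against $\mathbb{P}J$ (respectively $\mathbb{P}I$) if and only if $Uf$ has it against $J$ (respectively $I$), i.e. if and only if $Uf$ is a fibration (respectively a trivial fibration) of $C(\abcat)$; thus fibrations, like weak equivalences, are detected by $U$, and the model structure is indeed created by the forgetful functor. The main obstacle, as indicated, is verifying the acyclicity condition.
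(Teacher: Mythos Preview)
Your proof is correct and follows essentially the same route as the paper: both arguments invoke the standard transfer/lifting criterion for cofibrantly generated model structures (the paper cites \cite[Theorem 11.3.2]{hir03}), verify smallness of the domains of $\mathbb{P}I$ and $\mathbb{P}J$, and check that relative $\mathbb{P}J$-cell complexes map under $U$ to acyclic cofibrations in $C(\abcat)$. The only notable difference is in how colimit-preservation by $U$ is obtained: you argue directly that $U$ creates colimits because $T$ and $[N]$ are automorphisms, whereas the paper observes that $U$ also has a \emph{right} adjoint $\mathbb{R}$ (the product version of $\mathbb{P}$, introduced just after the previous lemma), so $U$ is a left adjoint and therefore preserves colimits automatically---a slightly slicker route to the same conclusion. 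Your treatment of the acyclicity step is in fact more explicit than the paper's, which simply asserts that ``$\mathbb{P}$ preserves acyclic cofibrations'' without unpacking the formula $U\mathbb{P} = \bigoplus_k T^k(-)[-kN]$ as you do.
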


\begin{proof} Let $I$ be the generating cofibrations and $J$ the
generating trivial cofibrations of $C(\abcat)$. Using the lifting
criterion of \cite[Theorem 11.3.2]{hir03}, one only needs to show
that $\mathbb{P}I$ and $\mathbb{P}J$ satisfy the small-object
argument and that a $\mathbb{P}J$-cell complex is a weak
equivalence. Since the forgetful functor is both a left and a right
adjoint, it preserves all colimits, thus $\mathbb{P}I$ and
$\mathbb{P}J$ satisfy the small-object argument. Since $\mathbb{P}$
preserves acyclic cofibrations, it follows that applying $U$ to a
$\mathbb{P}J$-cell complex gives an acyclic cofibration in
$C(\abcat)$. Thus the forgetful functor takes $\mathbb{P}J$-cell
complexes to weak equivalences.
\end{proof}

The above proposition implies that the adjoint functor pair $(\mathbb{P},U)$ is a
Quillen adjunction with $\mathbb{P}$ being the left and $U$ being
the right Quillen functor.
Note that this model structure on $C^{(T,N)}(\abcat)$ is also
cofibrantly generated with the generating cofibrations being the
image of the generating cofibrations in $C(\abcat)$ under the
functor $\mathbb{P}$, see \cite[Appendix 1]{Sch07b}.

\begin{lemma}
The functor $\mathbb{P}$ preserves quasi-isomorphisms.
\end{lemma}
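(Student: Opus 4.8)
The plan is to reduce the statement to two facts: that a self-equivalence of an abelian category is exact, and that in a Grothendieck abelian category homology commutes with direct sums.

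First I would record that, being an equivalence, $T \co \abcat \to \abcat$ is exact, hence so is each iterate $T^k$, and therefore $T^k$ commutes with the formation of homology: $H_n(T^k C_*) \cong T^k H_n(C_*)$ naturally. The shift $C_* \mapsto C[-kN]_*$ is exact as well; the sign $(-1)^{kN}$ it introduces on the differential alters a complex only up to a natural isomorphism, so it has no effect on homology. Consequently, if $f \co M_* \to M'_*$ is a quasi-isomorphism in $C(\abcat)$, then the map $T^k f[-kN] \co T^k M[-kN]_* \to T^k M'[-kN]_*$ is a quasi-isomorphism for every $k \in \zz$.

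Next I would use the explicit formula for $\mathbb{P}$. One sees from the description of the differential above that it carries the $k$-th summand $T^k M_{n+kN}$ of $\mathbb{P}M_n$ into the $k$-th summand $T^k M_{n+kN-1}$ of $\mathbb{P}M_{n-1}$, so that
\[
\mathbb{P}M_* \;=\; \bigoplus_{k \in \zz} \bigl( T^k M[-kN]_*,\ (-1)^{kN}T^k d \bigr)
\]
is a direct sum of subcomplexes, and $\mathbb{P}f$ is the corresponding direct sum $\bigoplus_{k \in \zz} T^k f[-kN]$ of the maps from the previous paragraph. It therefore remains only to show that a direct sum of quasi-isomorphisms is a quasi-isomorphism.

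This last point is where the standing assumption that $\abcat$ is Grothendieck is used: filtered colimits, and hence arbitrary direct sums, are exact, so $H_n(-)$ commutes with direct sums. Thus $H_n(\mathbb{P}f) \cong \bigoplus_{k \in \zz} H_n\bigl(T^k f[-kN]\bigr)$ is a direct sum of isomorphisms, hence an isomorphism, for every $n$, and so $\mathbb{P}f$ is a quasi-isomorphism. The only genuinely non-formal ingredient is the exactness of direct sums in $\abcat$; everything else is bookkeeping with twists and shifts, so that is the step I would flag as the crux.
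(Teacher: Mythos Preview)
Your argument is correct and follows the same route as the paper: $T$ and shift preserve quasi-isomorphisms, $\mathbb{P}$ is a direct sum of such functors, and a direct sum of quasi-isomorphisms is a quasi-isomorphism. You are in fact more careful than the paper, which simply asserts the last step without invoking exactness of direct sums; your explicit appeal to the Grothendieck (AB5, hence AB4) hypothesis is the right justification for that implicit step.
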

\begin{proof}
The functor $T$ is an equivalence, hence it preserves quasi-isomorphisms, as does
the shift functor. Since $\mathbb{P}$ is just an infinite direct sum of shifts and
applications of $T$, it preserves homology isomorphisms.
\end{proof}

One particular model structure that will be of interest
is the model structure created in \cite{Fra96}. It is
used on the algebraic model for the $K$-local stable homotopy
category. We will discuss this category in more detail in Section
\ref{Franke}. We add the assumption that $\abcat$ is a Grothendieck
abelian category so that we have cofibrant generation. Note that a
Grothendieck abelian category always has enough injectives
\cite[Corollary X.4.3]{Ste76} and every object is small
\cite[Proposition A.2.]{Hov01}.

\begin{definition}
A \textbf{Grothendieck abelian category} $\abcat$ is a cocomplete abelian category, where filtered colimits commute with finite limits. Further,
there is a \textbf{generator} $G$,  that is, $\abcat(G,-)$ is faithful.
\end{definition}

\begin{lemma}\label{injectivestructure}
If $\abcat$ is a Grothendieck abelian category, then there is a
cofibrantly generated model structure on $C^{(T,N)}(\abcat)$ with the
quasi-isomorphisms as the weak equivalences and the monomorphisms as the cofibrations.
We call this the \textbf{injective model structure}.
\end{lemma}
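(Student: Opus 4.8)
The plan is to obtain this model structure by transferring the injective model structure on ordinary chain complexes along the forgetful functor $U$, using Proposition \ref{create}, and then to identify the cofibrations. First recall that for a Grothendieck abelian category $\abcat$ the category $C(\abcat)$ carries the \emph{injective model structure}: a cofibrantly generated model structure whose weak equivalences are the quasi-isomorphisms and whose cofibrations are the monomorphisms (this uses precisely that $\abcat$ is Grothendieck, hence has enough injectives \cite{Ste76} and that every object is small \cite{Hov01}). Since $T$ is a self-equivalence it is exact, so it preserves monomorphisms and quasi-isomorphisms and is therefore a left Quillen functor for this structure. Proposition \ref{create} then applies and yields a cofibrantly generated model structure on $C^{(T,N)}(\abcat)$ in which $f$ is a weak equivalence (respectively a fibration) exactly when $Uf$ is; in particular the weak equivalences are the quasi-isomorphisms, as required, and the generating cofibrations are $\mathbb{P}I$.

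It remains to identify the cofibrations with the monomorphisms. I would first note that $C^{(T,N)}(\abcat)$ is itself a Grothendieck abelian category: kernels and cokernels are computed by $U$ (a cokernel of a quasi-periodic map inherits a quasi-periodic structure because $T$ is exact), $U$ preserves all limits and colimits since it has both adjoints, a generator is obtained by periodifying a generator of $\abcat$ and its shifts, and filtered colimits are exact because they are so in $C(\abcat)$. In particular $U$ is exact and faithful, hence conservative, so a map in $C^{(T,N)}(\abcat)$ is a monomorphism if and only if $Uf$ is. For one inclusion, $\mathbb{P}$ is exact --- it is assembled from the exact functors $T^{k}$, the shifts, and direct sums, which are exact in a Grothendieck category --- so it sends the generating cofibrations to monomorphisms; since monomorphisms in a Grothendieck abelian category are closed under coproducts, pushouts, transfinite composition and retracts, every cofibration of the transferred structure is a monomorphism.

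For the reverse inclusion I would show that every monomorphism $f\colon A\rightarrowtail B$ has the left lifting property against every trivial fibration $p$. Such a $p$ is an epimorphism whose kernel $K$ has underlying chain complex $UK$ an injective object of $C(\abcat)$, i.e.\ a contractible complex of injective objects of $\abcat$. The crucial input is that such a $K$ carries a contracting homotopy \emph{inside} $C^{(T,N)}(\abcat)$, that is, compatible with the twist $T$: decomposing the contractible injective complex $UK$ into disk complexes and tracking the $T$-twisted periodicity exhibits $K$ as a product of ``periodified disks'', each of which plainly admits such a homotopy. Hence $\underhom_{C^{(T,N)}(\abcat)}(M,K)$ is contractible for every $M$. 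The lift is then produced by the standard obstruction argument applied to the short exact sequence $0\to A\to B\to B/A\to 0$: since $UK$ is degreewise injective, the induced sequence of $\underhom$-complexes into $K$ is short exact, and the obstruction to a quasi-periodic lift lies in a homology group of $\underhom_{C^{(T,N)}(\abcat)}(B/A,K)$, which vanishes.

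The step I expect to be the main obstacle is exactly this last one. Proposition \ref{create} only formally determines the weak equivalences and fibrations; the substance of the lemma is that the cofibrations then come out to be \emph{all} monomorphisms rather than some proper subclass, as happens for projective-type transfers. This rests on the fact that the kernels of trivial fibrations stay contractible (indeed injective) compatibly with the twist $T$, and establishing that is the one place where the special structure of $C^{(T,N)}(\abcat)$, rather than a purely formal argument, is genuinely used.
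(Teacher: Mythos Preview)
Your lift of Beke's injective model structure on $C(\abcat)$ via Proposition~\ref{create} is exactly the paper's proof, which in fact consists only of those two steps and does not address why the lifted cofibrations coincide with all monomorphisms. Your supplementary argument for that identification therefore goes beyond what the paper provides; it is correct in outline, and the step you flag as the obstacle---producing a \emph{quasi-periodic} contracting homotopy on the kernel $K$ of a trivial fibration---can be made precise by choosing the homotopy freely in degrees $0,\dots,N-1$ and extending by periodicity via $\alpha_K$, the same device the paper later uses in the proof of Lemma~\ref{cofibrant}.
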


\begin{proof}
For $C(\abcat)$ this is \cite[Proposition 3.13]{Bek00}, which uses Theorem \ref{smith}.
We can lift this model structure to $C^{(T,N)}(\abcat)$ using the above results.
\end{proof}

At this point we would like to mention some parallels to topology. The main example of interest is the category $C^1(\mathcal{A})$, a special case of $C^{(T,N)}(\abcat)$. We introduce it in Section \ref{Franke}. It comes with an equivalence of triangulated categories
\[
\mathcal{R}: D^1(\mathcal{A}) \longrightarrow \Ho(L_1\mathcal{S}).
\]
Here, $D^1(\mathcal{A})$ is the homotopy category of $C^1(\mathcal{A})$ with the injective model structure. Further, $\Ho(L_1\mathcal{S})$ denotes the $K$-local stable homotopy category at $p>2$. The chain complex corresponding to the sphere $L_1 S^0$ under $\mathcal{R}$ is exactly $\mathbb{P}\mathcal{I}$. So in our construction, we would like the periodified unit $\mathbb{P}\mathcal{I}$ to play the role of a localised sphere.

The stable homotopy category itself is not equivalent to a category of chain complexes. However, the reader might find the constructions in this section similar to the following adjunction
\[
L_1 = -\wedge^L L_1 S^0:  \Ho(\mathcal{S}) \raisebox{-0.1\height}{$\overrightarrow{\longleftarrow}$} \Ho(L_1\mathcal{S}) :U.
\]
Localisation with respect to $K_{(p)}$ equals smashing with $L_1 S^0$, which is left adjoint to the forgetful functor.

\section{Quasi-periodic chain complexes as modules}\label{modules}

We describe the category of quasi-periodic chain
complexes as a category of modules over a specifically chosen
monoid. It gives a nice description of the monoidal
structure of $C^{(T,N)}(\abcat)$.

We now assume that $\abcat$ is a closed symmetric monoidal
category with tensor product $\otimes$ and unit $\mathcal{I}$. Here
$\abcat$ has both a tensor product and an internal homomorphism
object $F(-,-)$, which are related by the usual adjunction. In this case we
must make further assumptions on $T$. We want $T$ to behave like
$N$-fold suspension, in particular we do not require $T$ to be a
monoidal functor.

\begin{definition}\label{compatible}
We say that $T$ is \textbf{compatible} with the monoidal structure if
there is a natural isomorphism of functors
$$m \co T \to T\mathcal{I} \otimes (-).$$ 
\end{definition}
If $T$ is compatible with the monoidal structure then for any $X$ and $Y$ there is a natural isomorphism $T(X \otimes Y) \to TX \otimes Y$. 

\begin{lemma}\label{functioncompatible}
If $T$ is compatible with the monoidal structure, so is $T^n$, for any $n \in \zz$.
There are natural isomorphisms
$$
TF(X,Y) \cong F(T^{-1}X,Y) \cong F(X,TY).
$$
\end{lemma}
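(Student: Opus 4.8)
The plan is to prove both statements by repeatedly invoking the natural isomorphism $m \co T \to T\mathcal{I} \otimes (-)$ and the closed symmetric monoidal structure on $\abcat$. First I would treat the compatibility of $T^n$. For $n \geq 0$ this is an easy induction: given that $T^{n-1}$ is compatible, so there is a natural isomorphism $T^{n-1}(-) \cong T^{n-1}\mathcal{I} \otimes (-)$, one composes with $m$ applied to $T^{n-1}(-)$ to get $T^n(-) = T(T^{n-1}(-)) \cong T\mathcal{I} \otimes T^{n-1}(-) \cong T\mathcal{I} \otimes T^{n-1}\mathcal{I} \otimes (-) \cong T^n\mathcal{I} \otimes (-)$, where the last step uses the associativity constraint and naturality of $m$ at $\mathcal{I}$ to identify $T\mathcal{I} \otimes T^{n-1}\mathcal{I}$ with $T(T^{n-1}\mathcal{I}) = T^n\mathcal{I}$. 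For $n < 0$ one uses that $T$ is an equivalence: writing $n = -m$ with $m > 0$, the functor $T^n$ is inverse to $T^m$, and since $T^m(-) \cong T^m\mathcal{I}\otimes(-)$ is an equivalence, its inverse must be isomorphic to $F(T^m\mathcal{I},-)$ — but to express it as $T^n\mathcal{I}\otimes(-)$ one observes $T^n\mathcal{I} = T^n\mathcal{I}$ is invertible for the tensor product (its inverse object is $T^m\mathcal{I}$, since $T^m\mathcal{I}\otimes T^n\mathcal{I} \cong T^{m+n}\mathcal{I} = \mathcal{I}$ by the $n\geq 0$ case applied appropriately), so tensoring with it is an equivalence with inverse tensoring with $T^m\mathcal{I}$, hence isomorphic to $T^n$.

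Next I would deduce the internal-hom isomorphisms. The key input is that $T\mathcal{I}$ is an invertible object: $T\mathcal{I}\otimes T^{-1}\mathcal{I} \cong \mathcal{I}$ by the first part. For an invertible object $L$ with inverse $L^{-1}$, tensoring with $L$ is left adjoint to tensoring with $L^{-1}$, but it is also left adjoint to $F(L,-)$; by uniqueness of adjoints $F(L,-) \cong L^{-1}\otimes(-)$. Applying this with $L = T^{-1}\mathcal{I}$ (so $L^{-1} = T\mathcal{I}$) gives $F(T^{-1}X, Y) \cong F(T^{-1}\mathcal{I}\otimes X, Y) \cong F(T^{-1}\mathcal{I}, F(X,Y)) \cong T\mathcal{I}\otimes F(X,Y) \cong TF(X,Y)$, where the first iso uses compatibility ($T^{-1}X \cong T^{-1}\mathcal{I}\otimes X$), the second is the tensor-hom adjunction, and the last is compatibility of $T$ again. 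For the other isomorphism, $F(X, TY) \cong F(X, T\mathcal{I}\otimes Y)$; here one needs that $F(X, L\otimes Y) \cong L\otimes F(X,Y)$ for $L$ invertible, which follows again by an adjunction/Yoneda argument: $\hom(Z, F(X,L\otimes Y)) \cong \hom(Z\otimes X, L\otimes Y) \cong \hom(L^{-1}\otimes Z\otimes X, Y) \cong \hom(L^{-1}\otimes Z, F(X,Y)) \cong \hom(Z, L\otimes F(X,Y))$, naturally in $Z$.

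The main obstacle is not any single deep step but rather assembling the coherence: one must be careful that the isomorphisms produced from $m$ and the associativity/symmetry constraints actually compose correctly, in particular that $T^n\mathcal{I}$ behaves as a genuine invertible object with $T^{-n}\mathcal{I}$ as inverse, and that the adjunction-uniqueness arguments are applied to the correct variance. Once invertibility of $T\mathcal{I}$ is established cleanly, the hom-isomorphisms are formal consequences of the Picard-type calculus for invertible objects in a closed symmetric monoidal category, so I would state invertibility of $T\mathcal{I}$ as the pivotal intermediate fact and derive everything else from it together with Definition \ref{compatible}.
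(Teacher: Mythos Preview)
Your argument is correct. The paper actually states this lemma without proof, so there is no approach to compare against; your plan---establishing invertibility of $T\mathcal{I}$ (and more generally $T^n\mathcal{I}$) via the compatibility isomorphism $m$ and the fact that $T$ is a self-equivalence, and then deducing the internal-hom identities from the standard Picard calculus in a closed symmetric monoidal category---is a clean and complete way to fill in the omitted details.
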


From now on we assume that $T$ is compatible with the monoidal structure. 
We can think of $\mathcal{I}$ as an object of $C(\abcat)$ concentrated in degree zero.
We show that $\mathbb{P}\mathcal{I}$ is a monoid in $C(\abcat)$ and that
the category of quasi-periodic chain complexes is isomorphic to
the category of $\mathbb{P}\mathcal{I}$-modules.

\begin{proposition}
The category of quasi-periodic chain complexes, $C^{(T,N)}(\abcat)$,
is isomorphic to the category of $\mathbb{P}\mathcal{I}$-modules in $C(\abcat)$.
\end{proposition}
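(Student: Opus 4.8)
The plan is to exhibit an explicit isomorphism of categories in both directions, using the periodification functor $\mathbb{P}$ and its monoid structure. First I would establish that $\mathbb{P}\mathcal{I} = \bigoplus_{k \in \zz} T^k\mathcal{I}[-kN]$ is a monoid in $C(\abcat)$. The unit $\mathcal{I} \to \mathbb{P}\mathcal{I}$ is the inclusion of the $k=0$ summand. For the multiplication $\mathbb{P}\mathcal{I} \otimes \mathbb{P}\mathcal{I} \to \mathbb{P}\mathcal{I}$, I would use compatibility of $T$ with the monoidal structure (Lemma \ref{functioncompatible}): on the summand indexed by $(j,k)$ we get $(T^j\mathcal{I}[-jN]) \otimes (T^k\mathcal{I}[-kN]) \cong T^{j+k}\mathcal{I}[-(j+k)N]$ via the natural isomorphisms $T^j(X \otimes Y) \cong T^jX \otimes Y$ and symmetry, together with $\mathcal{I} \otimes \mathcal{I} \cong \mathcal{I}$; this lands in the $(j{+}k)$-summand. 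Associativity and unitality follow from those of $\otimes$ in $\abcat$ together with coherence of $m$; sign bookkeeping from the shifts $[-kN]$ must be checked to be consistent, and this is the one genuinely fiddly point.

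Next I would show a $\mathbb{P}\mathcal{I}$-module structure on a chain complex $M$ is the same data as a quasi-periodicity isomorphism. Given a $\mathbb{P}\mathcal{I}$-module structure $\mathbb{P}\mathcal{I} \otimes M \to M$, restricting to the $k=1$ summand $T\mathcal{I} \otimes M[-N] \to M$ and using $T\mathcal{I} \otimes M \cong TM$ (from Definition \ref{compatible}) and shifting gives a map $TM \to M[N]$; the module axioms force this to be an isomorphism, because the $k=-1$ summand supplies the inverse and the associativity axiom says the composites are identities. Conversely, given $\alpha_C \colon TC \to C[N]$, iterating gives isomorphisms $T^kC \cong C[kN]$ for all $k \in \zz$, and these assemble (using again $T^k\mathcal{I} \otimes C \cong T^kC$) into an action $\mathbb{P}\mathcal{I} \otimes C \to C$; one checks the module axioms from the coherence of the $\alpha$'s. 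These two constructions are mutually inverse essentially by the adjunction $(\mathbb{P}, U)$ of the earlier lemma: the free $\mathbb{P}\mathcal{I}$-module on a chain complex $M$ is $\mathbb{P}\mathcal{I} \otimes M = \mathbb{P}M$, and the counit/unit of the free--forgetful adjunction for modules matches the counit/unit of $(\mathbb{P}, U)$.

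Finally I would check that these assignments are functorial and inverse to each other on morphisms: a chain map compatible with the $\alpha$'s is exactly a $\mathbb{P}\mathcal{I}$-linear map, which is immediate once the correspondence on objects is set up, since both conditions are ``commutes with the action of each summand $T^k\mathcal{I}$''. The main obstacle I anticipate is not conceptual but organisational: keeping the signs $(-1)^{kN}$ in the differentials (as specified in the definition of $\mathbb{P}M$) consistent with the Koszul signs coming from $\otimes$ of shifted complexes, so that the monoid multiplication is an actual chain map and the module action squares commute on the nose rather than up to sign. I expect this to work out precisely because the signs in the definition of $\mathbb{P}M$ were chosen for exactly this purpose, but it requires careful indexing.
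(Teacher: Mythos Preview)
Your proposal is correct and follows essentially the same route as the paper: both directions use the identification $T^k\mathcal{I}\otimes X \cong T^kX$ to translate a $\mathbb{P}\mathcal{I}$-action into a family of maps $\phi(k)\co T^kX \to X[kN]$, and then unitality ($\phi(0)=\id$) together with associativity ($\phi(k)\phi(l)=\phi(k+l)$, so $\phi(1)\phi(-1)=\id$) forces $\phi(1)$ to be the desired periodicity isomorphism. Your plan is in fact slightly more thorough than the paper's own proof, which leaves the monoid structure on $\mathbb{P}\mathcal{I}$ and the morphism-level check implicit.
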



\begin{proof}
First of all, we prove that if $X$ is a quasi-periodic cochain
complex, then $X$ has a natural action of $\mathbb{P}\mathcal{I}$.
We start by writing out level $n$ of $\mathbb{P}\mathcal{I} \otimes X$,
the tensor product in the category of chain complexes:
$$
(\mathbb{P}\mathcal{I} \otimes X)_n = \bigoplus_{k \in \zz } T^k \mathcal{I}[-kN] \otimes X_{n+kN}
\cong \bigoplus_{k \in \zz } T^k X_{n+kN} \cong \mathbb{P}X_n
$$
the action map is then induced by the structure map of $X$ and the fold map
$$
\bigoplus_{k \in \zz } T^k X_{n+kN} \to \bigoplus_{k \in \zz } X_n \to X_n.
$$
It is easy to check that this map is associative and unital, the
unit of $\mathbb{P}\mathcal{I}$ being the obvious inclusion
$\mathcal{I} \to \mathbb{P}\mathcal{I}$. We note that the
differential of $$(\mathbb{P}\mathcal{I} \otimes X)_n \cong
\bigoplus_{k \in \zz } T^k X_{n+kN}$$ is given by $(-1)^{kN} T^k
d_{n+kN}$, which tells us that the differentials are compatible with
the action $\mathbb{P}\mathcal{I} \otimes X \to X$.

For the converse we prove that if $Y$ is a
$\mathbb{P}\mathcal{I}$-module, then $Y$ is a quasi-periodic chain
complex.
The action map of $Y$ takes the following form:
$$
(\mathbb{P}\mathcal{I} \otimes X)_n
\cong \bigoplus_{k \in \zz } T^k Y_{n+kN}
\to Y_n.
$$
Let $$\phi(k)_n \co T^k Y_{n+kN} \to Y_n$$ be the $k$ component of the above map.
We can assemble these to obtain a map $$\phi(k) \co T^k Y \to Y[kN].$$ We need to see that $\phi(1)$ is an isomorphism, it will be our periodicity map. Since the action map is unital, $\phi(0)$ is the identity. Associativity of the action shows that $$\phi(l+k) = \phi(k)\phi(l),$$ in particular $$\phi(1) \phi(-1) = \phi(0) = \phi(-1) \phi(1),$$ so $\phi(1)$ is an isomorphism.
\end{proof}

Another consequence of our computations is the following, which says that $\mathbb{P}$ is compatible with the monoidal structure.

\begin{corollary}
There is an isomorphism of quasi-periodic chain complexes
$\mathbb{P}\mathcal{I} \otimes X \cong \mathbb{P}X$, which is
natural in $X$.
\end{corollary}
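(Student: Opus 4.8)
The plan is to read the isomorphism straight off the levelwise computation performed in the proof of the preceding proposition, and then to check that it is compatible with the differentials and with the quasi-periodicity data, so that it lands in $C^{(T,N)}(\abcat)$ rather than merely in $C(\abcat)$.

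First I would observe that the chain of isomorphisms
$$(\mathbb{P}\mathcal{I}\otimes X)_n = \bigoplus_{k\in\zz} T^k\mathcal{I}[-kN]\otimes X_{n+kN} \;\cong\; \bigoplus_{k\in\zz} T^k X_{n+kN} = \mathbb{P}X_n$$
used in that proof does not require $X$ to be quasi-periodic: it only uses the definition of the tensor product in $C(\abcat)$ together with the fact that the $k$-th summand of $\mathbb{P}\mathcal{I}$ sits in chain degree $-kN$, and the middle isomorphism is the compatibility isomorphism $T^k\mathcal{I}\otimes(-)\cong T^k(-)$ of Lemma \ref{functioncompatible}, applied one summand at a time. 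Hence it is available for an arbitrary chain complex $X$, and each constituent map is natural in $X$. Compatibility with differentials was already recorded at the end of that proof — on the $k$-th summand both sides carry the differential $(-1)^{kN}T^k d_{n+kN}$ — so we obtain an isomorphism $\mathbb{P}\mathcal{I}\otimes X\cong\mathbb{P}X$ in $C(\abcat)$, natural in $X$.

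It then remains to upgrade this to an isomorphism in $C^{(T,N)}(\abcat)$. Via the isomorphism between $C^{(T,N)}(\abcat)$ and the category of $\mathbb{P}\mathcal{I}$-modules established in the proposition above, the object $\mathbb{P}\mathcal{I}\otimes X$ becomes quasi-periodic through the $\mathbb{P}\mathcal{I}$-action given by left multiplication on the first factor; I would trace that proof to write the resulting quasi-periodicity isomorphism down explicitly. The key point is that multiplying by the canonical generator of the chain-degree-$(-N)$ summand $T\mathcal{I}$ of $\mathbb{P}\mathcal{I}$ carries the $k$-th summand of $\mathbb{P}\mathcal{I}\otimes X$ isomorphically onto the $(k+1)$-th one, and under the summand identification of the previous paragraph this is precisely the reindexing $T^{k+1}X_{n+kN} = T^{k+1}X[N]_{n+(k+1)N}$ appearing in the definition of $\alpha_{\mathbb{P}X}$ in Section \ref{adjoints}. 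So the two quasi-periodicity isomorphisms correspond, and the chain isomorphism above is in fact an isomorphism of quasi-periodic chain complexes.

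An alternative and more structural route is uniqueness of adjoints: the proposition above identifies the functor forgetting a $\mathbb{P}\mathcal{I}$-module structure with the forgetful functor $U\colon C^{(T,N)}(\abcat)\to C(\abcat)$, so both $\mathbb{P}\mathcal{I}\otimes(-)$ (free $\mathbb{P}\mathcal{I}$-module) and $\mathbb{P}$ (periodification, by the earlier lemma) are left adjoint to $U$ and are therefore naturally isomorphic. In either approach the only part that needs genuine care — and what I expect to be the main obstacle — is the bookkeeping of the third paragraph: keeping the powers of $T$, the internal shifts $[-kN]$, and the signs aligned when comparing the explicit module action on $\mathbb{P}\mathcal{I}\otimes X$ with the explicit definition of $\alpha_{\mathbb{P}X}$. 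There is no conceptual difficulty here, only a real risk of an index or sign slip.
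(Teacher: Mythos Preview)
Your proposal is correct and follows the paper's approach: the paper gives no separate proof but simply states the corollary as ``another consequence of our computations'' in the preceding proposition, and your write-up is precisely an explicit unwinding of that remark. Your additional care with the quasi-periodicity data and the alternative via uniqueness of left adjoints to $U$ are both sound and go beyond what the paper spells out.
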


Thus we have shown that the category of quasi-periodic chain complexes and
$\mathbb{P}\mathcal{I}$-modules are isomorphic. We can also think of
$\mathbb{P}$ as an monad on the category of
chain complexes of objects of $\abcat$, we can then
describe $C^{(T,N)}(\abcat)$ as the category of modules over this monad.
We make no use of this monad description.

A result by Schwede and Shipley states that if a monoidal model category satisfies the \textbf{monoid axiom} \cite[Definition 3.3]{SchShi00}, then there is an induced model structure on the category of modules over a fixed commutative monoid. We apply this result to our case and arrive at the following.

\begin{proposition}\label{modulemodel}
Assume that there is a model structure on $C(\abcat)$ which is cofibrantly generated, monoidal and satisfies the monoid axiom. Then the category of $\mathbb{P}\mathcal{I}$-modules has a cofibrantly generated model structure where the weak equivalences and fibrations are
the underlying weak equivalences and fibrations.
The generating cofibrations and acyclic cofibrations are given by
applying $\mathbb{P}\mathcal{I} \otimes (-)$ to the generating cofibrations and acyclic cofibrations
of $C(\abcat)$.
\end{proposition}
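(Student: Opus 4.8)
The plan is to deduce this from the theorem of Schwede and Shipley on modules over a monoid in a cofibrantly generated monoidal model category satisfying the monoid axiom, \cite[Theorem 4.1]{SchShi00}. The proof of the preceding proposition exhibits $\mathbb{P}\mathcal{I}$ as a monoid in $C(\abcat)$, and by hypothesis the chosen model structure on $C(\abcat)$ is cofibrantly generated, monoidal and satisfies the monoid axiom, so all the hypotheses of that theorem hold. Invoking it yields a cofibrantly generated model structure on $\mathbb{P}\mathcal{I}$-modules in which $f$ is a weak equivalence or fibration exactly when $Uf$ is, where $U$ is the forgetful functor; and since the generating (acyclic) cofibrations of the module structure are by construction obtained by applying the free module functor $\mathbb{P}\mathcal{I} \otimes (-)$, the left adjoint of $U$, to those of $C(\abcat)$, the final assertion comes for free.

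If one prefers to carry out the argument directly rather than cite it, it runs through the same transfer principle used in the proof of Proposition \ref{create}, now along the adjunction $\big(\mathbb{P}\mathcal{I} \otimes (-),\, U\big)$. Using the lifting criterion \cite[Theorem 11.3.2]{hir03}, I would need to check that $\mathbb{P}\mathcal{I} \otimes I$ and $\mathbb{P}\mathcal{I} \otimes J$ permit the small object argument and that $U$ carries every relative $(\mathbb{P}\mathcal{I} \otimes J)$-cell complex to a weak equivalence. The first point is automatic: $U$ has both a left and a right adjoint, namely the free module functor $\mathbb{P}\mathcal{I} \otimes (-)$ and the cofree module functor $F(\mathbb{P}\mathcal{I}, -)$, hence preserves all colimits, so smallness of the relevant domains transfers across the adjunction from the cofibrant generation of $C(\abcat)$.

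The genuinely non-formal step — and the one I expect to be the main obstacle — is showing that $U$ sends a relative $(\mathbb{P}\mathcal{I} \otimes J)$-cell complex to a weak equivalence; this is precisely the role of the monoid axiom. One analyses a single cobase change in $\mathbb{P}\mathcal{I}$-modules of a map $\mathbb{P}\mathcal{I} \otimes j$ with $j \in J$ along an attaching map, and checks that, after forgetting to $C(\abcat)$, it is a cobase change of a map of the form $j \otimes (-)$ tensored with a $\mathbb{P}\mathcal{I}$-module, and therefore lies in the class of maps singled out by the monoid axiom. Transfinite composites of such maps remain in that class, which the monoid axiom declares to consist of weak equivalences, so relative $(\mathbb{P}\mathcal{I} \otimes J)$-cell complexes become weak equivalences under $U$. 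Once this is in place, the remaining lifting and retract axioms for the transferred structure are formal, which completes the proof.
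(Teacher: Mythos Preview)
Your proposal is correct and takes essentially the same approach as the paper: the paper simply states, immediately before the proposition, that this is an application of the Schwede--Shipley result on modules over a monoid in a monoidal model category satisfying the monoid axiom, and cites \cite[Theorem 4.1]{SchShi00} for the monoidal consequences in the subsequent corollary. Your first paragraph already matches this, and your additional direct transfer argument via \cite[Theorem 11.3.2]{hir03} is a faithful unpacking of what Schwede--Shipley do, so there is nothing to correct.
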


\begin{corollary}\label{modulemodelcor}
This model category on $C^{(T,N)}(\abcat)$ is precisely the model category of quasi-periodic chain complexes with the lifted model structure. Furthermore, since $\mathbb{P}\mathcal{I}$ is a commutative monoid, this model category is monoidal and satisfies the monoid axiom, with monoidal product given as the tensor over $\mathbb{P}\mathcal{I}$: $X \otimes_{\mathbb{P}\mathcal{I}} Y$.
\end{corollary}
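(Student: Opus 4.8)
The plan is to treat the two assertions of the corollary separately. For the first assertion, I would observe that the preceding proposition identifies the underlying category of $\mathbb{P}\mathcal{I}$-modules in $C(\abcat)$ with $C^{(T,N)}(\abcat)$, so it suffices to check that the two model structures on this common category agree. Proposition \ref{modulemodel} defines the weak equivalences and fibrations of the $\mathbb{P}\mathcal{I}$-module structure to be those maps $f$ for which $Uf$ is a weak equivalence, respectively a fibration, in $C(\abcat)$; Proposition \ref{create} defines the lifted model structure by exactly the same condition. Since a model structure is determined by its class of weak equivalences together with its class of fibrations, the two structures coincide. As a consistency check one can also compare the generating (trivial) cofibrations: Proposition \ref{modulemodel} takes these to be $\mathbb{P}\mathcal{I} \otimes (-)$ applied to the generators of $C(\abcat)$, the remark following Proposition \ref{create} takes them to be $\mathbb{P}(-)$ applied to the same generators, and the preceding corollary supplies the natural isomorphism $\mathbb{P}\mathcal{I} \otimes X \cong \mathbb{P}X$.

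For the second assertion I would first record why $\mathbb{P}\mathcal{I}$ is a commutative monoid in $C(\abcat)$. Writing $\mathbb{P}\mathcal{I} = \bigoplus_{k \in \zz} T^k\mathcal{I}[-kN]$, compatibility of $T$ with the monoidal structure (Definition \ref{compatible}, extended to all powers by Lemma \ref{functioncompatible}) supplies canonical isomorphisms $T^i\mathcal{I} \otimes T^j\mathcal{I} \cong T^{i+j}\mathcal{I}$, and these assemble into the multiplication already used in the proof that $\mathbb{P}\mathcal{I}$ is a monoid. The remaining point is that this multiplication is compatible with the symmetry of $C(\abcat)$, i.e.\ that $\mathbb{P}\mathcal{I}$ is graded-commutative; this is a check of the Koszul signs coming from the shifts $[-kN]$ against the symmetry isomorphisms for the powers of $T$, and it is the only genuinely computational step of the argument.

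With $\mathbb{P}\mathcal{I}$ a commutative monoid, I would then invoke the theorem of Schwede and Shipley \cite{SchShi00}: if $\mathcal{C}$ is a cofibrantly generated monoidal model category satisfying the monoid axiom and $R$ is a commutative monoid in $\mathcal{C}$, then the category of $R$-modules is again a cofibrantly generated monoidal model category satisfying the monoid axiom, with monoidal product $- \otimes_R -$. Applying this with $\mathcal{C} = C(\abcat)$ — which by the standing hypotheses of Proposition \ref{modulemodel} is cofibrantly generated, monoidal and satisfies the monoid axiom — and $R = \mathbb{P}\mathcal{I}$ yields the claim, the monoidal product on $C^{(T,N)}(\abcat)$ being $X \otimes_{\mathbb{P}\mathcal{I}} Y$ as asserted. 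The one part that requires care, and which I expect to be the main obstacle, is the sign bookkeeping needed to verify graded-commutativity of $\mathbb{P}\mathcal{I}$; everything else is either a formal comparison of model structures or a direct appeal to \cite{SchShi00}.
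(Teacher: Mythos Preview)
Your proposal is correct and follows essentially the same approach as the paper, which simply cites \cite[Theorem 4.1]{SchShi00}. You spell out more detail---the comparison of weak equivalences and fibrations for the first assertion, and the commutativity check for $\mathbb{P}\mathcal{I}$---where the paper leaves these implicit, but the core argument is identical.
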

This follows from \cite[Theorem 4.1]{SchShi00}.

\section{Comodules over Hopf algebroids}\label{hopfalgebroids}

We are going to recall some definitions, conventions and basic
properties about comodules over a Hopf algebroid as this is the abelian category we are most interested in. We refer to
\cite{Hov04} and \cite[Appendix B.3]{Rav92} for more details.

Let $k$ be a commutative ring. Then a \textbf{Hopf algebroid} is a
pair $(A,\Gamma)$ of commutative $k$-algebras such that for every
$k$-algebra $B$, the pair
\[
(\hom_{k-alg}(A,B),\hom_{k-alg}(\Gamma,B))
\]
forms a groupoid (i.e. a small category where every morphism is an
isomorphism) with $\hom_{k-alg}(A,B)$ being the objects and
$\hom_{k-alg}(\Gamma,B)$ being the morphisms. This means that there
are structure maps
\begin{itemize}
\item $\Delta: \Gamma \longrightarrow \Gamma \otimes_A \Gamma$
(coproduct, inducing composition of morphisms)
\item $c: \Gamma \longrightarrow \Gamma$ (conjugation, inducing
inverses)
\item $\epsilon: \Gamma \longrightarrow A$ (augmentation, inducing
identity morphisms)
\item $\eta_R: A \longrightarrow \Gamma$ (right unit, inducing
target)
\item $\eta_L: A \longrightarrow \Gamma$ (left unit, inducing
source)
\end{itemize}
satisfying certain conditions. Note that $\eta_R$ and $\eta_L$ make
$\Gamma$ into an $A$-bimodule. By $\otimes_A$ we mean the tensor product
of $A$-bimodules.

For technical reasons we are going to consider only \textbf{flat
Adams Hopf algebroids}, i.e. Hopf algebroids $(A,\Gamma)$ where
$\Gamma$ is a filtered colimit of finitely generated projectives
over $A$. The main topological example we have in mind are Hopf
algebroids of the form $(R_*, R_*R)$ where $R$ is a topologically
flat commutative ring spectrum.

\begin{definition}
A \textbf{$(A,\Gamma)$-comodule} is a left $A$-module $M$ together
with a map
\[
\psi_M: M \longrightarrow \Gamma \otimes_A M
\]
satisfying a coassociativity and counit condition.
\end{definition}
The category
$(A,\Gamma)\mbox{-comod}$ is a cocomplete abelian category
\cite[Lemma 1.1.1]{Hov04}.
It is also a closed monoidal category with symmetric monoidal
product $\wedge$ with unit $A$. For two $(A,\Gamma)$-comodules
$M$ and $N$, $M\wedge N$ denotes the tensor product of $M$
and $N$ as left $A$-modules ($A$ is
assumed to be commutative). The comodule structure map is then given
by
\[
M \otimes N \xrightarrow{\psi_M \otimes \psi_N} (\Gamma \otimes_A M)
\otimes (\Gamma \otimes_A N) \xrightarrow{\gamma} \Gamma \otimes_A
(M \otimes N)
\]
where $$\gamma((x \otimes m) \otimes_A (y \otimes n)) = xy \otimes_A
(m \otimes n),$$ see \cite[Lemma 1.1.2]{Hov04}. As for the closed
structure, the right adjoint of the monoidal product $\wedge$ is
denoted by $F(-,-)$. It is left exact in the first variable and
right exact in the second one. If $M$ is finitely presented over
$A$, then $$F(M,N) \cong {A\mbox{-mod}}(M,N)$$ as $A$-modules, but
$F(M,N)$ is generally not isomorphic to $(A,\Gamma)$-comod$(M,N)$.
For more properties of $F$ see \cite[Subsection 1.3]{Hov04}.

When one has a flat Adams Hopf algebroid, the category of comodules
is a Grothendieck abelian category by \cite[Propositions 1.4.1 and 1.4.4]{Hov04}.
This property will be important for Sections \ref{relativeprojective} and
\ref{quasiprojective}.

By $C(A,\Gamma)$ we mean the category of chain complexes in
$(A,\Gamma)$-comodules. There are two model structures on
$C(A,\Gamma)$ as described in \cite{Hov04}, namely the relative
projective model which we consider in Section \ref{relativeprojective}
and the homotopy model structure. The homotopy model
structure is a Bousfield localisation of the relative model
structure and has various technical advantages over the latter.
However, for our purposes it is more appropriate to consider the relative model
structure. Let us summarise a few properties.

\begin{theorem}[Hovey]
The relative projective model structure on $C(A,\Gamma)$ for a flat Adams Hopf algebroid
$(A,\Gamma)$ is cofibrantly (and finitely) generated, proper, stable and monoidal.
The cofibrations are precisely the degreewise split
monomorphisms whose cokernel is a complex of relative projectives
with no differential.
It satisfies the monoid axiom and if $X$ is cofibrant and $f$ is a projective
equivalence, then $X \wedge f$ is a projective equivalence.
\end{theorem}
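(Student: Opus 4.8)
The statement compiles several results of \cite{Hov04}, built on the relative homological algebra of \cite{ChrHov02}, so the plan is to indicate which ingredient supplies each clause and to flag where the real work sits. The relevant projective class $\mathcal{P}$ in $(A,\Gamma)\mbox{-comod}$ consists of the retracts of extended comodules $\Gamma \otimes_A M$ with $M$ a projective $A$-module; the associated $\mathcal{P}$-monomorphisms and $\mathcal{P}$-epimorphisms are then exactly the $A$-split ones. Because $(A,\Gamma)$ is a flat Adams Hopf algebroid, $\Gamma$ is a filtered colimit of finitely generated projective $A$-modules, so $\mathcal{P}$ has a \emph{set} of small generators, namely the $\Gamma \otimes_A P$ with $P$ finitely generated projective over $A$. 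Feeding this projective class into the Christensen--Hovey machine produces a cofibrantly, indeed finitely, generated model structure on $C(A,\Gamma)$: the generating cofibrations and generating trivial cofibrations are the evident $S^{n-1} \to D^n$ and $0 \to D^n$ maps built from the $\Gamma \otimes_A P$, the weak equivalences are the $\mathcal{P}$-equivalences — maps $f$ with $\hom_{C(A,\Gamma)}(\Gamma \otimes_A P, f)$ a quasi-isomorphism of abelian groups for every such $P$ — and the Adams hypothesis is exactly what gives finite generation. The characterisation of the cofibrations as degreewise split monomorphisms with cokernel a complex of relative projectives carrying the zero differential then falls out by the standard retract argument, since those are precisely the objects the generating cofibrations attach.

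For properness I would argue directly from this description: in the abelian category $C(A,\Gamma)$ the cofibrations and trivial cofibrations are $($degreewise split$)$ monomorphisms, and the fibrations and trivial fibrations are $($degreewise split$)$ epimorphisms, so the pushout squares defining left properness and the pullback squares defining right properness are bicartesian, and bicartesian squares preserve $\mathcal{P}$-equivalences because the detecting functors $\hom_{C(A,\Gamma)}(\Gamma \otimes_A P, -)$ are exact on such squares. Stability is the usual observation for an abelian model structure on unbounded chain complexes: the shift functor is a Quillen self-equivalence, so the homotopy category is triangulated and $C(A,\Gamma)$ is stable.

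The monoidal clauses carry the content, and I expect the pushout-product axiom, the monoid axiom, and the final sentence to be the main obstacle. The product is $\wedge = \otimes_A$ on underlying $A$-modules with unit $A$, and the crux is the computation that the smash of an extended comodule $\Gamma \otimes_A M$ with an arbitrary comodule is again extended, on a flat $A$-module — and flat on a projective when $M$ is projective; this is where flatness of $\Gamma$ over $A$ enters essentially. Granting this, and using that cofibrations have cokernels with zero differential, the pushout-product of two generating cofibrations is again of the required form and is trivial as soon as one factor is, so the pushout-product axiom holds; the unit axiom is immediate since $A$ is a retract of $\Gamma \otimes_A A$. The monoid axiom follows from the same flatness computation, since tensoring a complex of relative projectives with zero differential against a $\mathcal{P}$-acyclic complex stays $\mathcal{P}$-acyclic, and this passes to pushouts and transfinite compositions of maps of the form $(\text{generating trivial cofibration}) \wedge (\text{anything})$. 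The last sentence is the flatness-of-cofibrant-objects statement: a cofibrant $X$ has underlying $A$-module complex assembled from the $\Gamma \otimes_A (\text{projective } A\text{-module})$, each flat over $A$ because $\Gamma$ is, so $X \wedge (-) = U(X) \otimes_A U(-)$ is exact on underlying $A$-modules, which — together with the Adams hypothesis controlling the $\hom_{C(A,\Gamma)}(\Gamma \otimes_A P, -)$ that define the $\mathcal{P}$-equivalences — is enough to conclude that $X \wedge f$ is a projective equivalence whenever $f$ is one. Throughout, the genuine difficulty is organising the flatness bookkeeping so that ``stays inside $\mathcal{P}$'' and ``stays exact'' hold simultaneously; the remainder is routine within the Christensen--Hovey framework, and the full details are carried out in \cite{Hov04}.
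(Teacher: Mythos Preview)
The paper does not supply its own proof of this theorem; it is recorded as a summary of Hovey's results from \cite{Hov04}, with no argument given beyond the attribution. Your sketch is therefore already more detailed than what the paper offers, and it correctly identifies \cite{ChrHov02} and \cite{Hov04} as the load-bearing references and the Christensen--Hovey machine as the engine.

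One substantive discrepancy is worth flagging. You generate the projective class from extended comodules $\Gamma\otimes_A M$ with $M$ projective over $A$, and take your small generators to be the $\Gamma\otimes_A P$ with $P$ finitely generated projective. The paper (see Section~\ref{relativeprojective} and the generating sets $I$, $J$ displayed after Theorem~\ref{Grelmodel}) and Hovey in \cite{Hov04} instead generate $\mathcal{P}$ from the set $\mathcal{S}$ of \emph{dualisable} comodules --- those whose underlying $A$-module is finitely generated projective. These two descriptions are related but not obviously identical, and several of the monoidal arguments in \cite{Hov04} are run directly with dualisables, where they are immediate: closure of the generators under $\wedge$ is automatic for dualisables, the flatness-of-cofibrants argument reduces to $DP\wedge(-)\cong F(P,-)$, and the final clause of the theorem is precisely \cite[Proposition~2.1.4]{Hov04}. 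The Adams hypothesis then enters exactly as the statement that the dualisables form a generating \emph{set} for the comodule category, which is what makes Theorem~\ref{Grelmodel} apply. Your sketch would match both Hovey and the present paper more closely if you replaced the extended comodules by dualisables throughout; as written, you should at least verify that the two choices yield the same projective class, since your claim that the associated $\mathcal{P}$-epimorphisms are ``exactly the $A$-split ones'' depends on which adjunction (cofree versus free) the extended comodule functor realises.
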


\section{Franke's exotic model}\label{Franke}

For a spectrum $E$, the $E$-local stable homotopy category is
obtained from the stable homotopy category by formally inverting
those maps that induce isomorphisms in $E_*$-homology. The
resulting category is especially sensitive towards phenomena related
to $E_*$. For certain special homology theories this is an
important structural tool for studying the stable homotopy category
itself.

Jens Franke used quasi-periodic chain complexes to give an algebraic
description of $\Ho(L_1 \mathcal{S})$, the $K$-local stable homotopy
category at an odd prime $p$. Equivalently, one can consider the $E(1)$-local
stable homotopy category for the $p$-local Adams summand $E(1)$.
We briefly recall Franke's result, the abelian
categories and the self-equivalences used in it.

\begin{theorem}[Franke]
There is an equivalence of categories
\[
\mathcal{R}: D^{2p-2}(\mathcal{B}) \longrightarrow \Ho(L_1
\mathcal{S})
\]
where $D^{2p-2}(\mathcal{B})$ denotes the derived category of
quasi-periodic chain complexes over the abelian category
$\mathcal{B}$ and $\Ho(L_1 \mathcal{S})$ the $E(1)$-local stable
homotopy category. Further, there is a natural isomorphism
\[ E(1)_*(\mathcal{R}(C)) \cong
\bigoplus\limits_{i=0}^{2p-3} H_i(C)[i].\]
\end{theorem}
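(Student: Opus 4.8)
The plan is to deduce the statement from Franke's general reconstruction machinery for triangulated categories carrying a sufficiently sparse Adams-type spectral sequence (his ``systems of triangulated diagram categories''). The input data are: the triangulated category $\Ho(L_1\mathcal{S})$; the homology theory $E(1)_*(-)$, whose associated $E(1)$-based Adams spectral sequence has $E_2$-page $\mathrm{Ext}_{E(1)_*E(1)}(E(1)_*, E(1)_*X)$; the Grothendieck abelian category $\mathcal{B}$ of $\mathbb{Z}$-graded $E(1)_*E(1)$-comodules; the self-equivalence $T$ of $\mathcal{B}$ given by the internal shift of degree $2p-2$, which is compatible with the closed symmetric monoidal structure in the sense of Definition~\ref{compatible} because it is induced by the periodicity class $v_1 \in E(1)_{2p-2}$; and the period $N = 2p-2 = |v_1|$. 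Since $(E(1)_*, E(1)_*E(1))$ is a flat Adams Hopf algebroid, the discussion of Section~\ref{hopfalgebroids} shows $\mathcal{B}$ is indeed Grothendieck abelian, so $C^{(T,2p-2)}(\mathcal{B})$ carries the injective model structure of Lemma~\ref{injectivestructure} and $D^{2p-2}(\mathcal{B})$ denotes its homotopy category.

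The crux is a sparseness estimate. Because $p$ is odd, the cohomological dimension of the category of $E(1)_*E(1)$-comodules (which is $2$) is strictly smaller than the periodicity $2p-2$ — an inequality equivalent to $p>2$, and in Franke's setting the stronger bound $2 \le (2p-2)-2$ is also available. This numerical gap forces the $E(1)$-based Adams spectral sequence of every object to degenerate at $E_2$ — there is a horizontal vanishing line separating consecutive $v_1$-bands, leaving no room for differentials — and it makes all extension problems and higher Toda brackets ``algebraically visible'', i.e.\ recoverable from the comodule structure up to the twist by $T$. It is precisely the failure of this inequality at $p=2$, where $2p-2=2$ equals the cohomological dimension, that makes $\Ho(L_1\mathcal{S})$ rigid there rather than exotically algebraic; establishing this estimate and extracting the requisite homological control is the main obstacle, with everything downstream comparatively formal.

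Granting the collapse, one builds $\mathcal{R}$ by realizing Adams resolutions: a quasi-periodic complex $C$ over $\mathcal{B}$ has homology $H_*(C)$ a quasi-periodic graded comodule, Franke's construction produces from it a tower of $E(1)$-injective spectra whose associated $E_2$-datum is $H_*(C)$, and $\mathcal{R}(C)$ is the homotopy inverse limit of this tower. I would then check: $\mathcal{R}$ is exact (triangulated); it is conservative, using that $E(1)_*$ detects isomorphisms in the $E(1)$-local category together with the faithfulness of homology on $D^{2p-2}(\mathcal{B})$; it is fully faithful, where the collapse is used to identify $[\mathcal{R}(C), \mathcal{R}(D)]_*$ with $\hom_{D^{2p-2}(\mathcal{B})}(C,D)_*$, both being computed by the same $\mathrm{Ext}$-groups over $E(1)_*E(1)$ with the same vanishing differentials; and it is essentially surjective, since the $E(1)$-homology of any $E(1)$-local spectrum is a periodic comodule and hence realised as $H_*(C)$ for a suitable $C$, whose $\mathcal{R}(C)$ then agrees with it.

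Finally, the homology formula is built into the construction. The bottom layer of the Adams tower for $\mathcal{R}(C)$ reads off $H_*(C)$; since $C$ carries only a quasi-periodicity of period $2p-2$ whereas $E(1)_*(-)$ is genuinely $(2p-2)$-periodic through the invertible class $v_1$, summing over a single fundamental domain $0 \le i \le 2p-3$ of the grading — with $v_1$ implementing the identifications $H_i(C) \cong H_{i+2p-2}(C)$ supplied by the quasi-periodicity isomorphism — recovers all of $E(1)_*\mathcal{R}(C)$, giving $E(1)_*(\mathcal{R}(C)) \cong \bigoplus_{i=0}^{2p-3} H_i(C)[i]$. Naturality in $C$ is immediate from the functoriality of the Adams-tower assignment.
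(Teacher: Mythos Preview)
The paper does not prove this theorem at all: it is stated as a result of Franke \cite{Fra96} and used as a black box, with the surrounding text only recalling the definitions of $\mathcal{B}$, $\mathcal{A}$, and $T$. So there is no proof in the paper to compare against; your proposal is an outline of Franke's original argument rather than a reconstruction of anything the authors do.

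As a sketch of Franke's method your outline is broadly accurate --- the sparseness bound forcing collapse of the $E(1)$-Adams spectral sequence, the realisation functor built from Adams-type towers, and the identification of $E(1)_*\mathcal{R}(C)$ with the homology over a fundamental domain are indeed the essential ingredients. Two points of imprecision are worth flagging. First, you identify $\mathcal{B}$ with the category of graded $E(1)_*E(1)$-comodules, but in the paper's conventions that category is $\mathcal{A}$, not $\mathcal{B}$: the category $\mathcal{B}$ is Bousfield's category of $\zz_{(p)}$-modules with Adams operations, and $\mathcal{A}$ is assembled from $2p-2$ shifted copies of it. Second, the numerical input to Franke's machinery is not literally ``cohomological dimension $2$'' but the condition $n^2+n < 2p-2$ (here $n=1$), which controls the injective dimension of the relevant abelian category relative to the period; for $n=1$ this does reduce to $p>2$, so your conclusion is correct even if the phrasing is slightly off.
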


We would like to remark that Franke's theorem also holds for $\Ho(L_n \mathcal{S})$ whenever $n^2 + n <2p-2$. However, the description of the abelian category is less explicit. This is why we only formulate it for $n=1$ and $p>2$, although our main results will also hold in the whole of Franke's range.

\medskip
Let us recall the ingredients of this theorem. We will first
describe a category $\mathcal{A}$ which is equivalent to the
category of $E(1)_*E(1)$-comodules as introduced by Bousfield in
\cite{Bou85}, see also \cite{ClaCroWhi07}.

\begin{definition}
Let $p$ be an odd prime, set $\mathcal{B}$ to be the category of modules over $\zz_{(p)}$ (the $p$-local integers), with Adams operations $\psi^k$, $k \in \zz_{(p)}^*$, such that, for each $M \in \mathcal{B}$:
\begin{itemize}
\item There is an eigenspace decomposition
$$M \otimes \qq \cong \bigoplus\limits_{j \in \zz} W_{j(p-1)}$$
such that for all $w \in W_{j(p-1)}$, and $k \in \zz_{(p)}^*$, $(\psi^k \otimes \id)w= k^{j(p-1)}w$.

\item For each $x \in M$ there is a finitely generated submodule $C(x)$, which contains $x$, such that for all $m \geqslant 1$, there is an $n$ such that the action of $\zz_{(p)}^*$ on $C(x)/p^m(x)$ factors through the quotient of $(\zz/p^{n+1})^*$ by its subgroup of order $p-1$.
\end{itemize}
\end{definition}

There is a self-equivalence $T^{j(p-1)} \co \mathcal{B} \to
\mathcal{B}$, for each $j \in \zz$. It leaves the underlying
$\zz_{(p)}$-module unchanged but $\psi^k$ acts on this as
$k^{j(p-1)}\psi^k$ ($k \in \zz_{(p)}^*$).

\begin{definition}
The objects of the category $\mathcal{A}$ are collections $(M_n)_{n
\in \zz}$, with $M_n \in \mathcal{B}$, with specified isomorphisms
$T^{p-1}(M_n) \to M_{n+2p-2}$, for each $n \in \zz$.
\end{definition}

The category $\mathcal{B}$ is then a subcategory of $\mathcal{A}$,
an object $M \in \mathcal{B}$ can be viewed as a collection
$(M_n)$, where $M_n=M$ whenever $n \equiv 0 \mod 2p-2$ and is zero
elsewhere. The isomorphisms are then the identity on objects. So the
category $\mathcal{A}$ is isomorphic to the sum of $2p-2$ shifted
copies of $\mathcal{B}$.

\begin{theorem}[Bousfield, Clarke-Crossley-Whitehouse]
The abelian category $\mathcal{A}$ is isomorphic to the category of
$(E(1)_*,E(1)_*E(1))$-comodules.
\end{theorem}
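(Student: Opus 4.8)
The plan is to exhibit mutually inverse exact functors between the category of $(E(1)_*, E(1)_*E(1))$-comodules and $\mathcal{A}$, essentially by recovering and repackaging Bousfield's description of $K$-local comodules in terms of stable Adams operations. First I would recall the structure of the Hopf algebroid $(E(1)_*, E(1)_*E(1))$: here $E(1)_* = \zz_{(p)}[v_1^{\pm 1}]$ with $v_1$ in internal degree $2p-2$, and $E(1)_*E(1)$ is a flat Adams Hopf algebroid whose functor of points sends a $\zz_{(p)}$-algebra $R$ to a groupoid whose morphisms encode the $p$-local stable Adams operations. The crucial classical input, due to Bousfield \cite{Bou85} (see also \cite{ClaCroWhi07}), is that an $E(1)_*E(1)$-comodule structure on an $E(1)_*$-module $M$ is the same datum as a family of additive operators $\psi^k \colon M \to M$, one for each $k \in \zz_{(p)}^*$, depending multiplicatively on $k$, with $\psi^1 = \id$, acting in internal degree $2j(p-1)$ with rational weight $k^{j(p-1)}$, and subject to a continuity constraint. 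That constraint is forced by the Adams condition on $\Gamma = E(1)_*E(1)$: the coaction of any element factors through a finitely generated projective sub-comodule, which is exactly what produces factorizations of the $\zz_{(p)}^*$-action through the finite quotients appearing in the definition of $\mathcal{B}$.

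Second I would unwind the internal grading. Since $v_1$ is a unit of degree $2p-2$, an $E(1)_*$-module $M = \bigoplus_{n} M_n$ is precisely the family $(M_n)_{n \in \zz}$ of $\zz_{(p)}$-modules together with the multiplication-by-$v_1$ isomorphisms $M_n \xrightarrow{\sim} M_{n+2p-2}$. Because $\psi^k(v_1) = k^{p-1} v_1$, transporting the $\psi^k$-action across multiplication by $v_1$ rescales it by $k^{p-1}$ — which is exactly the twist implemented by the self-equivalence $T^{p-1}$ of $\mathcal{B}$ — so the periodicity isomorphism becomes an isomorphism $T^{p-1}(M_n) \to M_{n+2p-2}$. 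That each $M_n$ with its $\psi^k$-action is an object of $\mathcal{B}$ is exactly the content of Bousfield's two conditions: the rational weight statement is the eigenspace decomposition in the first bullet, and the finite-level factorization of the $\zz_{(p)}^*$-action on the finite quotients $C(x)/p^m C(x)$ is the second. Conversely, from an object $(M_n)$ of $\mathcal{A}$ one forms $\bigoplus_n M_n$ as an $E(1)_*$-module, using the $T^{p-1}$-twisted periodicity isomorphisms to define the $v_1$-action, reassembles the coaction out of the $\psi^k$, and checks coassociativity and the counit law from multiplicativity of $k \mapsto \psi^k$ and $\psi^1 = \id$.

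Finally I would check functoriality and exactness. A comodule morphism is an $E(1)_*$-linear map commuting with the coaction, which translates into a family of $\zz_{(p)}$-linear maps commuting with all $\psi^k$ and with the periodicity isomorphisms, i.e. a morphism in $\mathcal{A}$; since both assignments merely repackage the same underlying module data, they are exact and mutually inverse, yielding the asserted isomorphism of abelian categories. One may note in passing that under this dictionary the comodule tensor product over $E(1)_*$ corresponds to the tensor product over $\zz_{(p)}$ with diagonal $\psi^k$, which is what the monoidal comparisons of the later sections rely on. I expect the one genuinely delicate point to be the matching in the second step: one must use the Adams structure of $E(1)_*E(1)$ in an essential way to see that comodule coactions are ``locally finite'' in precisely the manner that reproduces the continuity condition defining $\mathcal{B}$, so that no comodules are lost and none are spuriously added; the grading bookkeeping and the exactness verification are routine by comparison.
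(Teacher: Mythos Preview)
The paper does not prove this theorem; it is stated as a black box and attributed to Bousfield \cite{Bou85} and Clarke--Crossley--Whitehouse \cite{ClaCroWhi07}, with no argument given. So there is no ``paper's own proof'' to compare your proposal against.

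That said, your outline is a faithful summary of the strategy in those references: translate an $E(1)_*E(1)$-coaction into a compatible family of stable Adams operations $\psi^k$ for $k \in \zz_{(p)}^*$, then use the periodicity coming from the unit $v_1$ in degree $2p-2$ (with $\psi^k(v_1) = k^{p-1} v_1$) to pass between graded $E(1)_*$-modules and $(2p-2)$-periodic families of $\zz_{(p)}$-modules twisted by $T^{p-1}$. Your identification of the delicate point is also correct: matching the continuity condition in the definition of $\mathcal{B}$ with the structure of $E(1)_*E(1)$ is the substantive step, and in \cite{ClaCroWhi07} this is handled by an explicit analysis of the operation algebra rather than by an abstract appeal to the Adams condition as you suggest. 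If you were to turn your sketch into a full proof, that is where the real work would lie; the rest is, as you say, bookkeeping.
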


We note here that the Hopf algebroid
$(E(1)_*,E(1)_*E(1))$ is a flat Adams Hopf algebroid \cite[Theorem
1.4.9]{Hov04}. Hence we are consistent with the technical
assumptions needed for talking about the relative projective model
structure on $C^{(T,N)}(\mathcal{A})$ later.

\begin{definition}
We define $C^1(\mathcal{A})$ to be $C^{(T^{p-1},1)}(\mathcal{A})$.
Similarly, we rename the category
$C^{(T^{(2p-2)(p-1)},2p-2)}(\mathcal{B})$ as
$C^{2p-2}(\mathcal{B})$. We also rename both
$T^{p-1}$ and $T^{(2p-2)(p-1)}$ as $T$.
Note that the two categories are isomorphic.
\end{definition}

\begin{lemma}
The self-equivalence $T$ is compatible with the
monoidal structure on
$\mathcal{A}$ in the sense of Definition \ref{compatible}.
\end{lemma}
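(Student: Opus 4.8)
The plan is to verify Definition~\ref{compatible} directly, using the description of $\mathcal{A}$ as the category of $(E(1)_*, E(1)_*E(1))$-comodules, in which the monoidal product $\otimes$ is the tensor over $E(1)_*$ of the underlying graded modules with the diagonal coaction and the unit $\mathcal{I}$ is $E(1)_*$. Under this identification $T = T^{p-1}$ is an internal degree shift by $2p-2$: on the ``collections'' model $(M_n)_{n \in \zz}$ of an object of $\mathcal{A}$ it applies the self-equivalence $T^{p-1} \co \mathcal{B} \to \mathcal{B}$ levelwise, and the structure isomorphisms $T^{p-1}(M_n) \cong M_{n+2p-2}$ identify this functor, up to natural isomorphism, with the reindexing $(M_n) \mapsto (M_{n+2p-2})$. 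The key point is that an internal shift is the same thing as tensoring with the corresponding shift of the unit, so $T$ is naturally isomorphic to $Z \otimes (-)$ for the invertible object $Z = T\mathcal{I}$ --- which is precisely the content of Definition~\ref{compatible}.

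In detail: first I would identify $T\mathcal{I}$. Since $T$ is the shift by $2p-2$, $T\mathcal{I}$ is $E(1)_*$ with its grading shifted by $2p-2$, equivalently $E(1)_*$ with its Adams-operation structure twisted by $T^{p-1}$; this is an invertible $E(1)_*$-module (free of rank one on a generator in degree $2p-2$, with inverse $T^{-1}\mathcal{I}$). Next I would take $m_X \co TX \to T\mathcal{I} \otimes X$ to be the canonical comparison assembled from the unit and associativity isomorphisms of the closed symmetric monoidal structure, that is, the composite $TX \cong E(1)_*[2p-2] \otimes_{E(1)_*} X = T\mathcal{I} \otimes X$ obtained by inserting the unit and commuting the shift past the tensor. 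On underlying graded modules $m_X$ is visibly an isomorphism, and visibly natural in $X$. The remaining point is that $m_X$ is a morphism in $\mathcal{A}$, i.e.\ that it intertwines the coactions, equivalently that in the collections picture it commutes with the Adams operations and with the quasi-periodicity isomorphisms; this reduces to the $\mathcal{B}$-level assertion that for $M \in \mathcal{B}$ the unit isomorphism $T^{p-1}(\zz_{(p)}) \otimes_{\zz_{(p)}} M \cong T^{p-1} M$ is $\psi^k$-equivariant, which holds because on the left $\psi^k$ acts as $(k^{p-1}\psi^k) \otimes \psi^k = k^{p-1}\psi^k$, matching the right by the definition of $T^{p-1}$.

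Once $m$ is in hand, the auxiliary isomorphism $T(X \otimes Y) \cong TX \otimes Y$ of the remark following Definition~\ref{compatible} follows formally. I expect the genuine work to lie in two places: first, pinning down exactly which endofunctor of $\mathcal{A}$ the symbol $T$ names --- it is defined through $T^{p-1}$ on the entries of a collection, and one must pass through the quasi-periodicity isomorphisms to recognise it as an internal shift so that the monoidal comparison takes the clean form above; and second, checking that the evident module-level map $m_X$ really does respect the comodule structure on the tensor product. Both steps are routine but demand careful bookkeeping of how the Adams operations are rescaled; everything else is a manipulation of the standard coherence isomorphisms.
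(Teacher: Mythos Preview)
Your proposal is correct and follows essentially the same route as the paper: both reduce the compatibility to the $\mathcal{B}$-level check that the identity map of underlying $\zz_{(p)}$-modules intertwines the Adams operations, which comes down to the equality $(k^{p-1}\psi^k)\otimes\psi^k = k^{p-1}(\psi^k\otimes\psi^k)$. The only cosmetic difference is that the paper verifies the two-variable isomorphism $T(X\otimes Y)\cong TX\otimes Y$ directly (and defines the monoidal product on $\mathcal{B}$ and $\mathcal{A}$ in the course of the proof), whereas you check the one-variable form $TX\cong T\mathcal{I}\otimes X$ of Definition~\ref{compatible} and note the two-variable version follows formally; the substantive computation is identical.
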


\begin{proof}
The monoidal product on $\mathcal{B}$ is given by
tensoring over $\zz_{(p)}$ and allowing $\zz_{(p)}^*$
to act diagonally.
So on $X \otimes_{\zz_{(p)}} Y$, $$\psi^k(a \otimes b) =
\psi^k a \otimes_{\zz_{(p)}} \psi^k  b.$$

That this product structure satisfies the compatibility conditions is easy to check.
The natural isomorphism $$T(X \otimes_{\zz_{(p)}} Y) \to TX
\otimes_{\zz_{(p)}} Y$$ is the identity map on underlying sets. To
see that this morphism is a map of $\mathcal{B}$, we note that
$T(X \otimes_{\zz_{(p)}} Y)$, $\psi^k$ acts as $k^{p-1} (\psi^k
\otimes \psi^k)$, whereas on $TX \otimes_{\zz_{(p)}} Y$, $\psi^k$
acts as $(k^{p-1} \psi^k) \otimes \psi^k$. Since we have tensored
over $\zz_{(p)}$, these are the same.
We extend this to a monoidal
product on $\mathcal{A}$ in the standard manner:
$$(M \smashprod N)_n = \bigoplus_{a+b=n} M_a \otimes_{\zz_{(p)}} N_b.$$
The unit is best described as $E(1)_*$.
It follows immediately that this tensor product is compatible with
$T$. One could see this equally well by considering the monoidal
structure on $E(1)_*E(1)$-comodules, see \cite{ClaCroWhi07}.
\end{proof}

Following Section \ref{modules}, the monoidal product defined in the above proof
induces a closed monoidal structure on
the category $C^1(\mathcal{A})$.

Franke constructs a model structure on quasi-periodic chain complexes
as follows, see also \cite[Example 1.3.3]{Fra96}. A quasi-periodic chain map
$f:X \longrightarrow Y$ is:
\begin{itemize}
\item a weak equivalence if it is a quasi-isomorphism
\item a fibration if it is a degreewise split epimorphism with
strictly injective kernel
\item a cofibration if it is a monomorphism.
\end{itemize}

We call this the \textbf{injective model structure}, we briefly mentioned
this model structure in Lemma \ref{injectivestructure}. A
quasi-periodic chain complex is $C$ is said to be \textbf{strictly injective} if it is
degreewise injective and every morphism from $C$ into an acyclic
complex $K$ is nullhomotopic via a quasi-periodic homotopy.

\begin{definition}
In the above special case we denote the respective derived
categories of $C^1(\mathcal{A})$ and $C^{2p-2}(\mathcal{B})$ by
$D^1(\mathcal{A})$ and $D^{2p-2}(\mathcal{B})$.
\end{definition}

The main defect of the injective model
structure is that it is not monoidal (the pushout-product axiom fails).
The counterexample is analogous to the one in the injective model structure on chain complexes of $R$-modules
for a ring $R$ \cite[Subsection 4.2]{Hov99}. The pushout-product axiom states (in part)
that in a monoidal model category with product $\otimes$, if one takes two
cofibrations $f:U \longrightarrow V$ and $g:W \longrightarrow X$, then the
induced map
\begin{equation*}\label{box}
f \square g: (V\otimes W) \coprod_{U \otimes W} (U \otimes X)
\longrightarrow V \otimes X
\end{equation*}
is again a cofibration. To see that
this is not the case for $C^{1}(\mathcal{A})$ with the injective
model structure we take 
\[
U= \mathbb{P}\mathcal{I},\,\,\, V=\mathbb{P}(\mathcal{I}
\otimes_{\mathbb{Z}_{(p)}} \mathbb{Q}),\,\,\, W=0 \,\,\,\,\mbox{and}\,\,\,\, X=\mathbb{P}(\mathcal{I}
\otimes_{\mathbb{Z}_{(p)}} \mathbb{Z}/p)
\]
with $f$ and $g$ being the
obvious inclusions. Remembering that
\[
\mathbb{P}C \smashprod_{\mathbb{P}\mathcal{I}} \mathbb{P}D = \mathbb{P} (C \smashprod_\mathcal{I} D),
\]
we see that the induced
pushout-product map is $X \longrightarrow 0$, which is clearly not a
monomorphism and hence not a cofibration.

\section{The relative projective model
structure}\label{relativeprojective}

In this section we are going to summarise the relative projective
model structure on $C(\abcat)$. It is a generalisation of the
projective model structure on $C(R\mbox{-mod})$ where $R$ is a
commutative ring. It was introduced by Christensen and Hovey in
\cite{ChrHov02}. Assuming that the relative projective model
structure exists on $C(\abcat)$ for some $\abcat$, we are
going to discuss the model structure it creates on the
quasi-periodic chains $C^{(T,N)}(\abcat)$. At the end of this section
we specialise to the case of $C^{1}(\mathcal{A})$.

One begins by specifying the objects playing the role of the
``projective'' objects. This class of chosen objects is called a \textbf{projective class}
$\mathcal{P}$, see \cite[Def. 1.1]{ChrHov02}. The objects $P \in
\mathcal{P}$ are called \textbf{relative projectives}. A morphism
$f:A \longrightarrow B$ in $\abcat$ is called
\textbf{$\mathcal{P}$-epimorphism} if it induces an epimorphism
${\abcat}(P,A) \longrightarrow {\abcat}(P,B)$
for all $P$ in $\mathcal{P}$.
Assuming that $\abcat$ is cocomplete, one way to obtain a
projective class is to take any set $\mathcal{S}$ and define
$\mathcal{P}$ to be the collection of retracts of coproducts of
objects in $\mathcal{S}$, \cite[Lemma 1.5]{ChrHov02}.

We use the projective class to define a model structure on $C(\abcat)$.
We say that a chain map $f:X \longrightarrow Y$ is:
\begin{itemize}
\item a \textbf{$\mathcal{P}$-equivalence} if $f_*: {\abcat}(P,X) \longrightarrow
{\abcat}(P,Y)$ is a quasi-isomorphism in $C(\mathbb{Z})$
for all $P \in \mathcal{P}$  (note that ${\abcat}(P,X)$ is
a chain complex in the usual way with differential
$(d_X)_* \co {\abcat}(P,X_n)\longrightarrow {\abcat}(P,X_{n-1})$),
\item a \textbf{$\mathcal{P}$-fibration} if ${\abcat}(P,f)$
is a degreewise surjection for all $P \in \mathcal{P}$,
\item a \textbf{$\mathcal{P}$-cofibration} if it has the left lifting
property with respect to all $\mathcal{P}$-fibrations that are also
$\mathcal{P}$-equivalences.
\end{itemize}

\begin{theorem}[Christensen-Hovey]\label{relativemodelstructure}
The above three classes form a
model structure on the category of chain complexes $C(\abcat)$ if and only if cofibrant
replacements exist.
\end{theorem}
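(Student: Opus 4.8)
The plan is to verify the model category axioms on $C(\abcat)$ directly, using as the organising principle that both the $\mathcal{P}$-fibrations and the ``$\mathcal{P}$-trivial fibrations'' (those that are also $\mathcal{P}$-equivalences) are detected by lifting against a generating set of maps. The ``only if'' direction is immediate: in any model category the factorisation axiom lets us factor $0 \to X$ as a cofibration $0 \to QX$ followed by a trivial fibration $QX \to X$, so $QX$ is a cofibrant replacement of $X$. The content is the ``if'' direction. First I would dispatch the cheap axioms: $C(\abcat)$ is complete and cocomplete because $\abcat$ is; the $\mathcal{P}$-equivalences have the two-out-of-three property because $\abcat(P,-)$ sends a composable pair to a composable pair in $C(\zz)$, where quasi-isomorphisms do; the $\mathcal{P}$-equivalences and $\mathcal{P}$-fibrations are retract-closed because $\abcat(P,-)$ is additive and the surjective quasi-isomorphisms and the degreewise epimorphisms of $C(\zz)$ are retract-closed; and the $\mathcal{P}$-cofibrations are retract-closed and lift against $\mathcal{P}$-trivial fibrations by definition.

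Next I would set up the generating maps. Fix a set $\mathcal{S}$ of relative projectives generating $\mathcal{P}$, and for $P \in \mathcal{S}$, $n \in \zz$, write $D^nP$ for the contractible complex $P \xrightarrow{\id} P$ in degrees $n$ and $n-1$ and $S^{n-1}P$ for $P$ in degree $n-1$. An elementary adjunction computation --- a chain map out of $D^nP$ is a degree-$n$ element of $\abcat(P,-)$, and one out of $S^{n-1}P$ a degree-$(n-1)$ cycle --- shows that a map $p$ lifts on the right against all $0 \to D^nP$ exactly when it is a $\mathcal{P}$-fibration, and against all $S^{n-1}P \to D^nP$ exactly when it is a $\mathcal{P}$-fibration and a $\mathcal{P}$-equivalence. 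One promotes these statements from $\mathcal{S}$ to all of $\mathcal{P}$ using $\abcat(\coprod_i P_i,-) = \prod_i \abcat(P_i,-)$ and the stability of surjective quasi-isomorphisms and degreewise epimorphisms in $C(\zz)$ under products and retracts. In particular the $\mathcal{P}$-cofibrations are precisely the maps with the left lifting property against the $S^{n-1}P \to D^nP$.

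There remain the two factorisations, of which one is routine and one carries the hypothesis. I would factor $f \co X \to Y$ as a trivial $\mathcal{P}$-cofibration followed by a $\mathcal{P}$-fibration explicitly: choose for each $n$ a $\mathcal{P}$-epimorphism $P_n \to Y_n$ out of a relative projective and set $Z = X \oplus \bigoplus_n D^nP_n$ with the evident map to $Y$. Then $X \to Z$ is a split monomorphism with contractible cokernel, hence a $\mathcal{P}$-equivalence, and it lifts against $\mathcal{P}$-trivial fibrations because each $D^nP_n$ is freely generated by a relative projective in degree $n$ (so lifting against these reduces to the defining surjectivity of a $\mathcal{P}$-fibration); meanwhile $Z \to Y$ is degreewise a $\mathcal{P}$-epimorphism, hence a $\mathcal{P}$-fibration. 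The remaining lifting axiom then follows by the retract argument: given a trivial $\mathcal{P}$-cofibration $i$, factor it this way, use two-out-of-three to see the $\mathcal{P}$-fibration half is a $\mathcal{P}$-trivial fibration, lift $i$ across it, and conclude that $i$ is a retract of a trivial $\mathcal{P}$-cofibration of the above type and hence itself lifts against $\mathcal{P}$-fibrations.

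This leaves the factorisation of $f \co X \to Y$ as a $\mathcal{P}$-cofibration followed by a $\mathcal{P}$-trivial fibration, which is where cofibrant replacements enter and which I expect to be the main obstacle. The naive candidate is the mapping cylinder: $f$ factors as $X \to \mathrm{Cyl}(f) \to Y$ with $\mathrm{Cyl}(f) \to Y$ a chain homotopy equivalence and a degreewise split epimorphism, hence a $\mathcal{P}$-trivial fibration, but $X \to \mathrm{Cyl}(f)$ is only a split monomorphism with cokernel the mapping cone $\mathrm{Cone}(f)$, which is not built out of relative projectives, so it need not be a $\mathcal{P}$-cofibration. The remedy is to resolve: replace $\mathrm{Cone}(f)$ by a cofibrant complex and glue the resolution into the cylinder --- equivalently, run a transfinite resolution of $Y$ relative to $X$ whose attached cells are manufactured from cofibrant replacements --- producing $X \to Z \to Y$ with $X \to Z$ a transfinite composite of pushouts of the maps $S^{n-1}P \to D^nP$, hence a $\mathcal{P}$-cofibration, and $Z \to Y$ degreewise a $\mathcal{P}$-epimorphism whose kernel is $\abcat(P,-)$-acyclic for every $P$, hence a $\mathcal{P}$-trivial fibration. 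The hard point, and the reason the existence of cofibrant replacements is exactly the hypothesis one wants, is that no small object argument is available in general --- for an arbitrary abelian $\abcat$ the relative projectives need not be small, so this model structure need not even be cofibrantly generated --- and so the acyclicity of $\abcat(P,\ker(Z\to Y))$ is not a formality but is precisely what the cofibrant replacement data is engineered to force. Once both factorisations are in hand, all the axioms are verified.
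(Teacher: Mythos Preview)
The paper does not give its own proof; it quotes the result from \cite{ChrHov02}. Your outline tracks Christensen--Hovey's original argument, and the easy parts are fine: the cheap axioms, the explicit (trivial cofibration, fibration) factorisation $X \to X \oplus \bigoplus_n D^nP_n \to Y$, and the retract argument for the remaining lifting axiom are all correct.

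The gap is in the last factorisation. You correctly note that the cylinder reduces the problem to resolving $\mathrm{Cone}(f)$, but you then assert that the resulting $X \to Z$ is a transfinite composite of pushouts of the maps $S^{n-1}P \to D^nP$. That is precisely what you cannot say: the hypothesis hands you only an abstract $\mathcal{P}$-cofibrant $Q$ with a $\mathcal{P}$-equivalence to $\mathrm{Cone}(f)$, and without smallness there is no reason $Q$ should be a cell complex --- indeed you yourself observe that the model structure need not be cofibrantly generated, which makes the ``transfinite composite of pushouts'' description internally inconsistent with your own caveat. The correct construction is a single pullback, not a transfinite process: upgrade $q\colon Q \to \mathrm{Cone}(f)$ to a $\mathcal{P}$-trivial fibration using the factorisation already in hand, set $Z = \mathrm{Cyl}(f) \times_{\mathrm{Cone}(f)} Q$, and check that $X \to Z$ is a degreewise split monomorphism with cokernel exactly $Q$. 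What you are then missing is the lemma --- stated in the paper as the Proposition immediately following this theorem --- that a degreewise split monomorphism with $\mathcal{P}$-cofibrant cokernel is a $\mathcal{P}$-cofibration. This is proved by a direct lifting argument, not by exhibiting a cell structure, and it is exactly the step that makes the bare hypothesis ``cofibrant replacements exist'' sufficient. (A side remark: introducing a generating set $\mathcal{S}$ is harmless for the lifting characterisations but is not part of the hypotheses of the theorem, and once the cell-complex picture is abandoned it plays no role.)
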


This model structure is called the \textbf{relative projective model
structure}, it is proper whenever it exists.
Christensen and Hovey also characterise the cofibrant
objects in this model structure.

\begin{proposition}
A chain map $i:A \longrightarrow B$ is a $\mathcal{P}$-cofibration
in $C(\abcat)$ if and only if it is a degreewise split
monomorphism with $\mathcal{P}$-cofibrant cokernel.

A chain complex $C$ is cofibrant in $C(\abcat)$ if and only if
it is degreewise relative projective and every map from $C$ to a
weakly $\mathcal{P}$-contractible chain complex $K$ is
nullhomotopic.
\end{proposition}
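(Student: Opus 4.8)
This characterisation is that of Christensen and Hovey \cite{ChrHov02}; the following is the route one would take, assuming (as in Theorem \ref{relativemodelstructure}) that the relative projective model structure exists, equivalently that cofibrant replacements do. Fix a generating set $\mathcal S$ for the projective class $\mathcal P$, so that $\mathcal P$ consists of the retracts of coproducts of objects of $\mathcal S$. For $P\in\mathcal S$ and $n\in\zz$ let $S^n(P)$ be the complex with $P$ in degree $n$ and zero elsewhere, and $D^n(P)$ the complex with $P$ in degrees $n$ and $n-1$ and identity differential; maps out of these into a complex $X$ correspond respectively to the cycles in $\abcat(P,X)_n$ and to arbitrary elements of $\abcat(P,X_n)$. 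A direct check then shows that the $\mathcal P$-fibrations are exactly the maps with the right lifting property against $J_{\mathcal P}=\{0\to D^n(P)\}$, and the trivial fibrations exactly those with the right lifting property against $I_{\mathcal P}=\{S^{n-1}(P)\hookrightarrow D^n(P)\}$; equivalently, $p\co X\to Y$ is a trivial fibration iff $\abcat(P,p)$ is a surjective quasi-isomorphism for every $P\in\mathcal P$. In particular the model structure is cofibrantly generated by $I_{\mathcal P}$. Finally record the ``path'' complex: for any $K$ let $\pth(K)$ be the contractible complex with $\pth(K)_n=K_{n+1}\oplus K_n$ and differential $(y,x)\mapsto(x-d_K y,\,d_K x)$, equipped with the projection $\pi_K\co\pth(K)\to K$, $(y,x)\mapsto x$. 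Then a chain map $C\to K$ lifts along $\pi_K$ exactly when it is null-homotopic, and since $\abcat(P,-)$ carries $\pth(K)$ to the analogous construction on $\abcat(P,K)$, the map $\pi_K$ is a trivial fibration precisely when $K$ is weakly $\mathcal P$-contractible, i.e. $\abcat(P,K)$ is acyclic for every $P\in\mathcal P$.

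For the ``only if'' directions one uses that a $\mathcal P$-cofibration is a retract of a transfinite composite of pushouts of maps in $I_{\mathcal P}$. A single such pushout alters the underlying complex in only one degree, where it adds on a summand from $\mathcal S$; hence an $I_{\mathcal P}$-cell complex $i\co A\to B$ is a degreewise split monomorphism, each $B_n$ is a coproduct of objects of $\mathcal S$, and its cokernel is cofibrant (a pushout of $i$) and degreewise relative projective (each $(B/A)_n$ is a summand of $B_n$). Passing to retracts --- using that $\mathcal P$, the cofibrant objects, and the degreewise split monomorphisms are all closed under retracts, and that cokernels preserve retracts --- gives that every $\mathcal P$-cofibration is a degreewise split monomorphism with $\mathcal P$-cofibrant cokernel, and that every cofibrant complex is degreewise relative projective. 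That a cofibrant $C$ admits only null-homotopic maps to a weakly $\mathcal P$-contractible complex $K$ is then immediate: $\pi_K\co\pth(K)\to K$ is a trivial fibration, so any $f\co C\to K$ lifts along it, and such a lift is a null-homotopy of $f$.

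For the ``if'' directions, suppose first that $C$ is degreewise relative projective and every map from $C$ to a weakly $\mathcal P$-contractible complex is null-homotopic, and let $p\co X\to Y$ be a trivial fibration with a map $f\co C\to Y$. Since $\abcat(P,-)$ is left exact, $\abcat(P,\ker p)$ is the kernel of the surjective quasi-isomorphism $\abcat(P,p)$, hence acyclic, so $\ker p$ is weakly $\mathcal P$-contractible. Each $C_n$ lies in $\mathcal P$ and $\abcat(C_n,p)$ is degreewise surjective, so $f$ lifts to a graded $\abcat$-map $g\co C\to X$ with $pg=f$; the degree $-1$ graded map $\delta=d_Xg-g\,d_C$ factors through $\ker p$ and is a cycle in the complex $\big(\hom^{\bullet}(C,\ker p),D\big)$ of graded $\abcat$-maps with the commutator differential $D$. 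Applying the hypothesis on $C$ to $\ker p$ and all its shifts shows that complex is acyclic, so $\delta=Dh$ for some graded $h\co C\to\ker p$, and $g-h$ is a chain-level lift of $f$; hence $0\to C$ has the left lifting property against all trivial fibrations, i.e. $C$ is cofibrant. The statement for an arbitrary map $i\co A\to B$ is proved in the same spirit: if $i$ is a degreewise split monomorphism with $\mathcal P$-cofibrant cokernel $C$, choose graded splittings $B_n\cong A_n\oplus C_n$, which present $d_B$ as the direct-sum differential plus a degree $-1$ graded map $C\to A$; given a lifting square for $i$ against a trivial fibration $p$, extend the map on $A$ over the $C$-summands using relative projectivity of the $C_n$, and annihilate the resulting obstruction (again a degree $-1$ cycle in $\hom^{\bullet}(C,\ker p)$) by a homotopy exactly as above, using that $C$ is $\mathcal P$-cofibrant.

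The main obstacle is that this model structure is only conditionally available: it exists exactly when cofibrant replacements do, and only then is one entitled to the cofibrant generation by $I_{\mathcal P}$ and hence to the cellular description of $\mathcal P$-cofibrations on which the ``only if'' directions rest. Granting that, the genuinely non-formal points are the analysis of a single $I_{\mathcal P}$-cell attachment --- one must check it adds a split summand in a single degree, so that cell complexes are degreewise split monomorphisms with degreewise relative projective, $\mathcal P$-cofibrant cokernels --- and the verification that $\pi_K\co\pth(K)\to K$ is a genuine $\mathcal P$-trivial fibration, which rests on $\abcat(P,-)$ commuting with the finite degreewise biproducts defining $\pth(K)$. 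Everything else is the standard ``DG-projective'' obstruction calculus with the Hom-complexes $\hom^{\bullet}(C,-)$, transported from $C(R\mbox{-mod})$ to $C(\abcat)$ via the functors $\abcat(P,-)$.
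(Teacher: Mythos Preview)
The paper does not prove this proposition; it merely records it as a result of Christensen and Hovey \cite{ChrHov02}. Your sketch is essentially correct and is close in spirit to the arguments the paper does give later for the quasi-periodic analogues (Lemmas~\ref{cofibrant} and~\ref{periodiccofib}): the ``if'' direction for cofibrant objects is the same degreewise-lift-then-correct-by-a-homotopy argument, and your treatment of the cofibration case is in fact more careful than the paper's version, since you keep track of the twisting cochain coming from a merely degreewise splitting of $0\to A\to B\to C\to 0$, whereas the paper's Lemma~\ref{periodiccofib} writes $B=A\oplus C$ and $g=(g_A,g_C)$ without commenting on why $g_C$ can be treated as a chain map.

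The one genuine divergence from Christensen--Hovey's original argument is in your ``only if'' direction. You deduce that a $\mathcal P$-cofibration is a degreewise split monomorphism with relative projective cokernel by invoking cofibrant generation and analysing $I_{\mathcal P}$-cell attachments. That is valid in the paper's setting (Grothendieck $\abcat$ with a set-generated projective class, Theorem~\ref{Grelmodel}), but Christensen--Hovey prove the characterisation in \cite[\S2]{ChrHov02} without assuming cofibrant generation, by lifting a cofibration against explicitly constructed trivial fibrations (built from disc and path complexes). Your caveat in the final paragraph acknowledges the dependence on cofibrant generation, but slightly misstates it: existence of the model structure (i.e.\ of cofibrant replacements) does not by itself give cofibrant generation by $I_{\mathcal P}$; that comes from the additional Grothendieck/set hypotheses. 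For the purposes of this paper that is harmless, but if you want the proposition in the generality in which it is stated here you should replace the cell-complex argument by the direct lifting argument from \cite{ChrHov02}.
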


Here, \textbf{weakly $\mathcal{P}$-contractible} means
$\mathcal{P}$-equivalent to 0.

\begin{theorem}[Hovey]\label{Grelmodel}
If $\abcat$ is a Grothendieck abelian category and the projective class
is constructed from a set $\mathcal{S}$ (using retracts and coproducts), then the
relative projective model structure on $C(\abcat)$ exists
and is cofibrantly generated.
\end{theorem}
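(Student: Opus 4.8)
The plan is to exhibit explicit generating sets built from $\mathcal{S}$, run the small object argument to produce cofibrant replacements, invoke the Christensen--Hovey criterion (Theorem \ref{relativemodelstructure}) to get the model structure, and then read off cofibrant generation. First I would fix the building blocks: for $P\in\mathcal{S}$ and $n\in\zz$ let $S^n(P)$ be the complex with $P$ in degree $n$ (zero differential) and $D^n(P)$ the complex with $P$ in degrees $n$ and $n-1$ joined by the identity, and set
\[
I=\{\,S^{n-1}(P)\to D^n(P)\ :\ P\in\mathcal{S},\ n\in\zz\,\},\qquad J=\{\,0\to D^n(P)\ :\ P\in\mathcal{S},\ n\in\zz\,\}.
\]
These are genuine sets precisely because the projective class comes from a \emph{set} $\mathcal{S}$; this is the first place the hypotheses are used.

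Next I would identify the lifting classes. Using $\hom_{C(\abcat)}(D^n(P),Y)\cong\abcat(P,Y_n)$ and $\hom_{C(\abcat)}(S^{n-1}(P),Y)\cong Z_{n-1}\abcat(P,Y)$, a routine unwinding shows that $f\co X\to Y$ lies in $J\inj$ exactly when $\abcat(P,f)$ is a degreewise surjection for every $P\in\mathcal{S}$, and $f$ lies in $I\inj$ exactly when $\abcat(P,f)$ is a surjective quasi-isomorphism of $\zz$-complexes (i.e.\ a trivial fibration for the projective model structure on $C(\zz)$) for every $P\in\mathcal{S}$. The key reduction step is then to pass from $\mathcal{S}$ to all of $\mathcal{P}$: since $\abcat(\coprod_iP_i,f)\cong\prod_i\abcat(P_i,f)$ and degreewise surjections, quasi-isomorphisms, and surjective quasi-isomorphisms of $\zz$-complexes are all stable under arbitrary products and under retracts, the conditions on $P\in\mathcal{S}$ are equivalent to the corresponding conditions on every $P\in\mathcal{P}$. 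Hence $J\inj$ is exactly the class of $\mathcal{P}$-fibrations and $I\inj$ is exactly the class of $\mathcal{P}$-fibrations that are $\mathcal{P}$-equivalences; by definition of $\mathcal{P}$-cofibration this gives $I\cof=\{\mathcal{P}\text{-cofibrations}\}$.

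Now I would run the small object argument, which applies because a Grothendieck abelian category has all small objects (and $C(\abcat)$ is again Grothendieck), so $I$ and $J$ both permit it. Factoring $0\to X$ as an $I$-cell map followed by a map in $I\inj$ yields $0\to QX\to X$ with $0\to QX$ in $I\cof$ (so $QX$ is $\mathcal{P}$-cofibrant) and $QX\to X$ in $I\inj$, hence a $\mathcal{P}$-equivalence; so cofibrant replacements exist and Theorem \ref{relativemodelstructure} produces the relative projective model structure. For cofibrant generation I would check the remaining input of Kan's recognition theorem (\cite[Theorem 11.3.1]{hir03}): each $0\to D^n(P)$ lifts against $I\inj$ (because $\abcat(P,f_n)$ is surjective there), so $J\subseteq I\cof$, and each $0\to D^n(P)$ is a $\mathcal{P}$-equivalence since $D^n(P)$ is contractible; together with the identifications of $I\inj$ and $J\inj$ above and the fact that $\mathcal{P}$-equivalences satisfy two-out-of-three and are closed under retracts, this shows the model structure is cofibrantly generated with generating cofibrations $I$ and generating trivial cofibrations $J$.

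I expect the main obstacle to be the smallness bookkeeping inside the small object argument: the objects of $\mathcal{S}$ are only $\kappa$-presentable for some possibly large cardinal $\kappa$, so one must take the transfinite factorisation towers long enough that each $\abcat(P,-)$ commutes with the colimits involved, and one uses exactness of filtered colimits in a Grothendieck category to control those colimits; this is also what one needs in order to know that $J$-cell complexes remain $\mathcal{P}$-equivalences. Everything else — the identification of $I\inj$ and $J\inj$, the reduction from $\mathcal{P}$ to $\mathcal{S}$, and the verification of the recognition-theorem hypotheses — is a formal manipulation once those generating sets are in place.
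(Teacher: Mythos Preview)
Your proposal is correct and is essentially the argument behind the cited result. The paper itself does not give a proof of this theorem: it attributes the statement to Hovey, refers to \cite[Theorem 5.7]{ChrHov02} for cofibrant generation, and merely records the generating sets
\[
I=\{\,S^{n-1}P\to D^nP\ :\ P\in\mathcal{S},\ n\in\zz\,\},\qquad J=\{\,0\to D^nP\ :\ P\in\mathcal{S},\ n\in\zz\,\},
\]
which are exactly the ones you wrote down. Your identification of $I\inj$ and $J\inj$, the reduction from $\mathcal{P}$ to $\mathcal{S}$ via products and retracts in $C(\zz)$, the use of smallness in a Grothendieck category to run the small object argument, and the appeal to Theorem~\ref{relativemodelstructure} together with Kan's recognition theorem constitute precisely the proof one finds in \cite{ChrHov02}; there is no alternative route in the paper to compare against.
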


The statement regarding cofibrant generation is \cite[Theorem 5.7]{ChrHov02},
the generating sets are below.
$$
I=\{ S^{n-1} P \to D^n P | P \in \mathcal{S}, \
n \in \mathbb{Z}  \} \quad J=\{ 0 \to D^n P | P \in
\mathcal{S}, \ n \in \mathbb{Z}  \}
$$
As usual, $S^{n-1}P$ denotes the chain complex that only consists of
$P$ concentrated in degree $n-1$ and $D^n P$ is the chain complex
with $P$ in degrees $n-1$ and $n$ (and zeroes elsewhere) with the
identity as the only non-trivial differential.

We further assume that $T(\mathcal{P})=\mathcal{P}$. This implies that $T$ is
left Quillen functor, as it preserves the generating sets above.

\begin{proposition}\label{randomlabel}
Say that a map of $C^{(T,N)}(\abcat)$ is
a \textbf{$\mathcal{P}$-equivalence} or a
\textbf{$\mathcal{P}$-fibration} if it is so as a map of $C(\abcat)$.
Then these classes of maps define a
cofibrantly generated model structure on quasi-periodic chains
$C^{(T,N)}(\abcat)$.
\end{proposition}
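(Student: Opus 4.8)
The plan is to apply Proposition~\ref{create} verbatim. That proposition says that whenever $C(\abcat)$ carries a cofibrantly generated model structure for which $T$ is a left Quillen functor, the forgetful functor $U \co C^{(T,N)}(\abcat) \to C(\abcat)$ creates a cofibrantly generated model structure on $C^{(T,N)}(\abcat)$ in which $f$ is a weak equivalence (resp.\ fibration) iff $Uf$ is. So the only thing to check is that the hypotheses are met in the case at hand: the relative projective model structure on $C(\abcat)$.

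First I would invoke Theorem~\ref{Grelmodel}: since $\abcat$ is Grothendieck and the projective class $\mathcal{P}$ is built from a set $\mathcal{S}$ by retracts and coproducts, the relative projective model structure on $C(\abcat)$ exists and is cofibrantly generated, with explicit generating sets $I = \{ S^{n-1}P \to D^n P \mid P \in \mathcal{S},\ n \in \zz \}$ and $J = \{ 0 \to D^n P \mid P \in \mathcal{S},\ n \in \zz\}$. Second, I would note that the standing assumption $T(\mathcal{P}) = \mathcal{P}$ (imposed just before the statement) forces $T$ to be a left Quillen functor: $T$ is already a self-equivalence, hence exact and cocontinuous, and since it sends $\mathcal{S}$-indexed chain complexes $S^{n-1}P$ and $D^n P$ to complexes of the same shape on relative projectives, it carries $I$ into $\mathcal{P}$-cofibrations and $J$ into trivial $\mathcal{P}$-cofibrations; by cofibrant generation this makes $T$ left Quillen. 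With both hypotheses of Proposition~\ref{create} in place, the forgetful functor creates the desired model structure, and the weak equivalences and fibrations are by definition exactly the $\mathcal{P}$-equivalences and $\mathcal{P}$-fibrations, as claimed. The resulting structure is cofibrantly generated, with generating (trivial) cofibrations the images of $I$ and $J$ under $\mathbb{P}$.

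The proof is therefore essentially a citation-chase, and I do not expect a genuine obstacle; the one point that deserves a sentence of care is verifying that $T$ preserves the generating (trivial) cofibrations rather than merely being exact — that is, checking that $T(S^{n-1}P) = S^{n-1}(TP)$ and $T(D^n P) = D^n(TP)$ with $TP \in \mathcal{P}$, which is immediate from $T(\mathcal{P}) = \mathcal{P}$ and the fact that $T$ commutes with the shift and with the identity differential of $D^n P$. One could alternatively observe that $T$ preserves all weak equivalences and all fibrations of the relative projective structure directly from the definitions (it induces isomorphisms $\abcat(P, T(-)) \cong \abcat(T^{-1}P, -)$ and $T^{-1}\mathcal{P} = \mathcal{P}$), which also shows $T$ is left Quillen; either route completes the argument.
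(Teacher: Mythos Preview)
Your proposal is correct and follows exactly the paper's approach: both reduce the statement to an application of Proposition~\ref{create}, using Theorem~\ref{Grelmodel} for cofibrant generation and the standing assumption $T(\mathcal{P})=\mathcal{P}$ to ensure $T$ is left Quillen, and both identify $\mathbb{P}I$ and $\mathbb{P}J$ as the generating sets. You are simply more explicit than the paper in spelling out why $T$ preserves the generating (trivial) cofibrations, which the paper asserts in one line just before the proposition.
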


Before we prove the proposition, it is worth mentioning that quasi-isomorphisms are not necessarily $\mathcal{P}$-equivalences. But in Franke's model, the weak equivalences are exactly the quasi-isomorphisms. We will see in Proposition \ref{quillenstep} that consequently the relative projective model structure and Franke's injective model structure are not Quillen equivalent. However, the relative projective model structure provides a vital intermediate step towards a model structure Quillen equivalent to Franke's model with better properties than the injective model.

We continue with the proof of Proposition \ref{randomlabel}
\medskip
\begin{proof}
It is immediate that this model structure is precisely that created by the
forgetful functor
$$
U: C^{(T,N)}(\abcat) \longrightarrow C(\abcat)
$$
as discussed in Proposition \ref{create}.
The generating cofibrations and acyclic cofibrations are given by the sets
$$
\mathbb{P}I=\{ \mathbb{P}S^{n-1} P \to \mathbb{P}D^n P | P \in \mathcal{S}, \
n \in \mathbb{Z}  \} \quad \mathbb{P}J=\{ 0 \to \mathbb{P}D^n P | P \in
\mathcal{S}, \ n \in \mathbb{Z}  \}
$$
where $\mathbb{P}$ is the periodification functor defined in Section
\ref{adjoints}.
\end{proof}

\begin{corollary}
A cofibration of $C^{(T,N)}(\abcat)$ is a cofibration of
$C(\abcat)$. An acyclic cofibration of $C^{(T,N)}(\abcat)$ is an
acyclic cofibration of $C(\abcat)$. Thus the functor $\mathbb{R}$
defined in Section \ref{adjoints} is a right Quillen functor, with
left adjoint $U$.
\end{corollary}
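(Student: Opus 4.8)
The plan is to reduce the first two claims to statements about the generating sets $\mathbb{P}I$ and $\mathbb{P}J$, and those in turn to the behaviour of $T$ and of the shift functor on $C(\abcat)$; the third claim then follows formally. By Proposition~\ref{randomlabel} the relative projective model structure on $C^{(T,N)}(\abcat)$ is cofibrantly generated with generating cofibrations $\mathbb{P}I$ and generating acyclic cofibrations $\mathbb{P}J$, so every cofibration of $C^{(T,N)}(\abcat)$ is a retract of a relative $\mathbb{P}I$-cell complex and every acyclic cofibration is a retract of a relative $\mathbb{P}J$-cell complex. As observed in the proof of Proposition~\ref{create}, $U$ is simultaneously a left and a right adjoint, hence preserves all colimits; so it carries a relative $\mathbb{P}I$-cell complex to a relative $U\mathbb{P}I$-cell complex and commutes with retracts. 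Since the cofibrations of $C(\abcat)$ are closed under coproducts, pushouts, transfinite compositions and retracts, and likewise the acyclic cofibrations, it will suffice to prove that every map in $U\mathbb{P}I$ is a cofibration of $C(\abcat)$ and that every map in $U\mathbb{P}J$ is an acyclic cofibration of $C(\abcat)$.

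The next step, and the only one requiring a computation, is to unwind the definition of $\mathbb{P}$ from Section~\ref{adjoints}: for $P\in\mathcal{S}$ and $n\in\zz$ one finds
\[
U\mathbb{P}(S^{n-1}P\to D^nP)=\bigoplus_{k\in\zz}T^k(S^{n-1}P\to D^nP)[-kN],
\]
and similarly $U\mathbb{P}(0\to D^nP)=\bigoplus_{k\in\zz}T^k(0\to D^nP)[-kN]$; the signs that $\mathbb{P}$ introduces on the differentials are irrelevant to whether these maps are (acyclic) cofibrations. Each summand is the image of a generating cofibration, respectively a generating acyclic cofibration, of $C(\abcat)$ under the composite of $T^k$ with the shift $[-kN]$. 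Under the standing assumption $T(\mathcal{P})=\mathcal{P}$ the functor $T$, and hence $T^k$, is left Quillen for the relative projective model structure (as noted before Proposition~\ref{randomlabel}), and the shift is a left Quillen autoequivalence of it (it carries $I$ and $J$ into themselves up to isomorphism). So each summand is a cofibration, respectively an acyclic cofibration, of $C(\abcat)$, and taking coproducts shows the two displayed maps are a cofibration and an acyclic cofibration. This establishes the first two assertions.

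For the last assertion, recall from Section~\ref{adjoints} that $U$ is left adjoint to $\mathbb{R}$. A left adjoint that preserves cofibrations and acyclic cofibrations is a left Quillen functor, so $U$ is left Quillen and therefore $\mathbb{R}$ is a right Quillen functor. I expect no real obstacle here: the argument is the standard retract-of-cell-complex bookkeeping for cofibrantly generated model categories, and the single non-formal ingredient is the identification of $U\mathbb{P}I$ and $U\mathbb{P}J$ as coproducts of twisted, shifted generating maps, together with the fact that $T$ and the shift are left Quillen for the relative projective structure, both of which are immediate from the set-up of Sections~\ref{adjoints} and~\ref{relativeprojective}.
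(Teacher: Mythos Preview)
Your proposal is correct and follows essentially the same route as the paper: both arguments reduce to showing that $U\mathbb{P}$ carries the generating sets $I$ and $J$ to (acyclic) cofibrations of $C(\abcat)$, and then use that $U$ preserves colimits to handle cell complexes and retracts. The only difference is packaging: the paper bundles your summand-by-summand check into the single observation that $\mathbb{P}\co C(\abcat)\to C(\abcat)$ is a left Quillen endofunctor (since $T$ is), whereas you unpack this into the explicit coproduct decomposition and verify each $T^k(-)[-kN]$ separately.
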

\begin{proof}
We prove the first of these statements, the proof of the second is
identical (it also follows from the fact that $U$ preserves cofibrations
and weak equivalences). The third follows immediately.

Since $T$ is a left Quillen functor, the periodification $\mathbb{P}
\co C(\abcat) \to C(\abcat)$ is also a left Quillen functor. Hence
the set $\mathbb{P} I$ above consists of cofibrations of $C(\abcat)$. It
follows immediately that $\mathbb{P} I \cof$ (as constructed in the category
$C(\abcat)$) is contained in the class of cofibrations of
$C(\abcat)$. In turn, applying $U$ to an element of the class $\mathbb{P} I \cof$
(as constructed in the category $C^1(\abcat)$) gives a cofibration of
$C(\abcat)$.
\end{proof}

Let us now give a characterisation of the cofibrant objects in
$C^{(T,N)}(\abcat)$ and the cofibrations. These results follow
a well-known standard argument (for an example see \cite[Section
2]{ChrHov02}) but we include them for completeness' sake.
Of course, since $C^{(T,N)}(\abcat)$ is cofibrantly generated, we can use the usual description
of cofibrant objects as retracts of relative cell complexes.

\begin{lemma}\label{cofibrant}
A quasi-periodic chain complex $C$ is cofibrant in
$C^{(T,N)}(\abcat)$ if and only if it is degreewise relative
projective and every map from $C$ to a weakly
$\mathcal{P}$-contractible quasi-periodic chain complex $K$ is
nullhomotopic with quasi-periodic homotopy.
\end{lemma}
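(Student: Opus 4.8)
The plan is to transfer the characterisation of cofibrant objects from $C(\abcat)$ (stated in the proposition of Section \ref{relativeprojective}) to $C^{(T,N)}(\abcat)$ by exploiting the Quillen adjunction $(\mathbb{P}, U)$, together with the fact that $U$ detects cofibrations and acyclic cofibrations down to $C(\abcat)$. First I would prove the ``only if'' direction. Suppose $C$ is cofibrant in $C^{(T,N)}(\abcat)$. By the preceding corollary, $UC$ is cofibrant in $C(\abcat)$, so by Christensen--Hovey's description $UC$ is degreewise relative projective; since $U$ is the identity on underlying objects, $C$ itself is degreewise relative projective. For the nullhomotopy claim, let $K$ be a weakly $\mathcal{P}$-contractible quasi-periodic chain complex and $f \co C \to K$ a quasi-periodic chain map. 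The point is that weak $\mathcal{P}$-contractibility of $K$ in $C^{(T,N)}(\abcat)$ means $UK$ is $\mathcal{P}$-equivalent to $0$ in $C(\abcat)$, i.e.\ $0 \to K$ is an acyclic cofibration (any map from $0$ is a cofibration, and being a $\mathcal{P}$-equivalence is detected underlying). Since $C$ is cofibrant, $C \to 0$ has the left lifting property against the acyclic fibration $K \to 0$ — wait, that gives a lift $0 \to C$, not what we want; instead I would argue as in \cite[Section 2]{ChrHov02}: form the object $K^I$ (a path object for $K$ built from $\mathbb{R}$ of the path object in $C(\abcat)$, which exists since $\mathbb{R}$ is right Quillen) and use that two maps $C \to K$ into a weakly contractible object are always left homotopic, hence $f$ is nullhomotopic; the homotopy is automatically quasi-periodic because it lands in $C^{(T,N)}(\abcat)$.

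For the ``if'' direction, suppose $C$ is degreewise relative projective and every map to a weakly $\mathcal{P}$-contractible quasi-periodic complex is quasi-periodically nullhomotopic. I must show $C$ is a retract of a $\mathbb{P}I$-cell complex, equivalently that $C \to 0$ has the left lifting property against every acyclic $\mathcal{P}$-fibration $p \co X \to Y$. Given such a $p$ and a map $f \co C \to Y$, one builds a lift one degree at a time, exactly as in the classical argument for chain complexes: the obstruction to extending a partial lift over degree $n$ lies in a $\hom_{\abcat}(C_n, -)$-group computed from the kernel of $p$, which is weakly $\mathcal{P}$-contractible and degreewise relative injective-ish relative to $\mathcal{P}$; degreewise relative projectivity of $C_n$ kills the lifting obstruction at each stage, and the hypothesis on nullhomotopies into weakly $\mathcal{P}$-contractible complexes kills the obstruction to coherently assembling the level-wise lifts into a chain map. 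The only new feature compared with $C(\abcat)$ is that all the maps in sight must be quasi-periodic; but $\ker p$ is a quasi-periodic subcomplex of $X$, the partial lifts can be chosen $T$-equivariantly using the quasi-periodicity isomorphisms (since $T(\mathcal{P}) = \mathcal{P}$, relative projectivity is compatible with the twist), and the final nullhomotopy is quasi-periodic by hypothesis.

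I expect the main obstacle to be bookkeeping the quasi-periodicity throughout the obstruction-theoretic lifting argument: one must check that every choice made at the level of the underlying chain complex can be made compatibly with the isomorphisms $\alpha_C \co T(C_*) \to C[N]_*$, so that the resulting lift and homotopy are genuine morphisms of $C^{(T,N)}(\abcat)$ rather than merely of $C(\abcat)$. This is where the assumption $T(\mathcal{P}) = \mathcal{P}$ (made just before Proposition \ref{randomlabel}) does the real work. The cleanest way to organise this is probably to reduce everything to the module picture of Section \ref{modules}: a quasi-periodic complex is a $\mathbb{P}\mathcal{I}$-module, $\mathbb{P}$ is the free-module functor, and the relative projective model structure on $\mathbb{P}\mathcal{I}$-modules is created from that on $C(\abcat)$; the cofibrant objects are then retracts of complexes built from free $\mathbb{P}\mathcal{I}$-modules on relative projectives, and the asserted characterisation drops out of the corresponding statement for $C(\abcat)$ applied after forgetting along $U$, together with the observation that a quasi-periodic nullhomotopy is exactly a $\mathbb{P}\mathcal{I}$-module nullhomotopy.
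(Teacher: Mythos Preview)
Your proposal follows essentially the same route as the paper: the ``if'' direction by choosing degreewise lifts $\gamma_n$ using relative projectivity, measuring the failure $\partial = d_X\gamma - \gamma d_C$ to be a chain map as a quasi-periodic map into $\ker p$, and correcting via the assumed quasi-periodic nullhomotopy; the ``only if'' direction by using that $U$ preserves cofibrancy (hence degreewise relative projectivity) and then a path-object argument to produce the nullhomotopy. The paper carries both steps out explicitly rather than by appeal to abstract homotopy theory, writing $PK = K \oplus K[-1]$ with $d(x,y)=(dx, x-dy)$ and checking directly that the projection $PK \to K$ is an acyclic $\mathcal{P}$-fibration, so that a lift $(f,h)$ of $f$ yields the desired quasi-periodic nullhomotopy $h$.

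One small correction: your suggestion to build the path object as $\mathbb{R}$ applied to a path object for $UK$ in $C(\abcat)$ does not work as stated, since there is no natural map $K \to \mathbb{R}\bigl((UK)^I\bigr)$ making this a path object for $K$. The fix is exactly what the paper does: construct $PK = K \oplus K[-1]$ directly in $C^{(T,N)}(\abcat)$; this is already quasi-periodic because shifts and finite direct sums preserve the structure. Likewise, your alternative of deducing everything from the $C(\abcat)$ statement via the module picture does not immediately yield that the nullhomotopy can be taken quasi-periodic; that is precisely the point requiring the explicit in-category construction.
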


\begin{proof}
Let $C$ be degreewise relative projective and assume that every map
from $C$ to a weakly $\mathcal{P}$-contractible quasi-periodic chain
complex $K$ is nullhomotopic with quasi-periodic homotopy. We are
going to show that the inclusion $0 \longrightarrow C$ has the left
lifting property with respect to all acyclic fibrations $f: X
\longrightarrow Y$, that is, there is a lift in the diagram
\[
\xymatrix{ 0 \ar[r]\ar[d] & X \ar@{->>}[d]_{f}^{\sim} \\
C \ar[r]_{g}\ar@{.>}[ur]^{\tilde{g}}& Y. }
\]
As $C$ is degreewise relative projective and $f: X \longrightarrow
Y$ is a $\mathcal{P}$-epimorphism, there are degreewise lifts
$\gamma_n: C_n \longrightarrow X_n$.
We can choose those lifts to be quasi-periodic, so
$$\gamma_{n-N}=\alpha_X \circ T\gamma_n \circ \alpha_C^{-1},$$
by simply choosing lifts in degrees 0 to $N-1$ and extending.
However, these maps $\gamma_n$ do not necessarily form a
chain map, so we are going to add an extra term to obtain a chain
map.

Consider the degree-wise defined map $$\partial:=d_X\circ \gamma -
\gamma \circ d_C$$ from $C$ to $X$.  Then $f \circ \partial=0$, so there is a lift
$F:C \longrightarrow K[1]$ where $K$ is the kernel of the acyclic
fibration $f$, thus $F\circ j = \partial$, where $j \co K
\longrightarrow X$ is the inclusion. One can check that $F$ is not
just a degreewise map in $\abcat$ but a chain map. (Note that
$d_{K[1]}=-d_K$.) This map $F$ can also be chosen to be a
quasi-periodic map.

The kernel $K$ is weakly contractible, so by assumption $F$ is
nullhomotopic with quasi-periodic nullhomotopy, i.e. there is a
family of maps
$$h_n:C_n \longrightarrow K_n $$ such that
$$F_n=h_{n-1}\circ d_C + d_{K[1]}\circ h_n$$ and \quad
$$h_{n-N}=\alpha_K \circ Th_n \circ \alpha_C^{-1}.
$$
Now define the desired lift $\tilde{g}$ as $\tilde{g}:= \gamma +
j\circ h$. This is a quasi-periodic chain map by construction and
satisfies $f \circ \tilde{g}=g$.


Conversely, let $C \in C^{(T,N)}(\abcat)$ be cofibrant. Because $C$
is also cofibrant in $C(\abcat)$ by Corollary \ref{cofibrant}, we
know that $C$ is degreewise relative projective. Further, let $f:C
\longrightarrow K$ be a morphism with $K \in C^1(\abcat)$ be weakly
contractible. Consider the quasi-periodic chain complex $PK:= K
\oplus K[-1]$ with differential $d(x,y)=(dx,x-dy)$. The projection
$p: PK \longrightarrow K$ is an acyclic fibration, so $f$ factors
over $p$ because $C$ is cofibrant.
So there is a lift

\[
\xymatrix{ 0 \ar[r]\ar[d] & PK\ar@{->>}[d]_{p}^{\sim} \\
C \ar[r]_{f}\ar@{.>}[ur]^{\tilde{f}}& K. }
\]

with $\tilde{f}=(f,h)$ where $h: C \longrightarrow K[-1]$ is a quasi-periodic chain map. Because $\tilde{f}$ is also a quasi-periodic chain map, we have
$$
f= d\circ h + h \circ d,$$ so $h$ also serves as a quasi-periodic nullhomotopy of $f$, which is what we wanted to prove.
\end{proof}

\begin{lemma}\label{periodiccofib}
A quasi-periodic chain map $i:A \longrightarrow B$ is a
$\mathcal{P}$-cofibration in $C^{(T,N)}(\abcat)$ if and only if
it is a degreewise split monomorphism with $\mathcal{P}$-cofibrant
cokernel.
\end{lemma}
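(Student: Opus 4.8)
The plan is to run the argument that proves Lemma~\ref{cofibrant} in a relative form --- for the map $i$ rather than for a single object --- together with a pushout argument for one implication.

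\textbf{If $i$ is a $\mathcal{P}$-cofibration.} The forgetful functor sends cofibrations of $C^{(T,N)}(\abcat)$ to cofibrations of $C(\abcat)$ (the corollary following Proposition~\ref{randomlabel}), and by the characterisation of $\mathcal{P}$-cofibrations of $C(\abcat)$ recalled above these are degreewise split monomorphisms with $\mathcal{P}$-cofibrant cokernel; so $i$ is a degreewise split monomorphism. Since $U$ preserves colimits, the cokernel $Q$ of $i$ in $C^{(T,N)}(\abcat)$ is the pushout of $i$ along $A \to 0$, so $0 \to Q$ is a cobase change of a cofibration, hence itself a cofibration; thus $Q$ is cofibrant in $C^{(T,N)}(\abcat)$.

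\textbf{Conversely.} Assume $i \co A \to B$ is a degreewise split monomorphism whose cokernel $Q$ is cofibrant; it is enough to show $i$ has the left lifting property against every acyclic $\mathcal{P}$-fibration $f \co X \to Y$. Given a lifting square with top map $u \co A \to X$ and bottom map $v \co B \to Y$, I would first pick a quasi-periodic graded splitting $B \cong A \oplus Q$, choosing splittings in degrees $0$ through $N-1$ and extending by $T$ as in Lemma~\ref{cofibrant}; then, using that each $Q_n$ lies in $\mathcal{P}$ and that $f$ is degreewise a $\mathcal{P}$-epimorphism, lift the restriction of $v$ to the summand $Q$ through $f$, again quasi-periodically. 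Combined with $u$ on the summand $A$ this yields a quasi-periodic graded map $\gamma \co B \to X$ with $\gamma i = u$ and $f\gamma = v$, which need not be a chain map. The degree $-1$ map $\partial := d_X\gamma - \gamma d_B$ satisfies $f\partial = 0$ and $\partial i = 0$, because $u$ and $v$ are chain maps; hence $\partial$ vanishes on $A$ and lands in $K := \ker f$, so $\partial = j \circ F \circ q$ for the inclusion $j \co K \hookrightarrow X$, the projection $q \co B \to Q$, and a map $F \co Q \to K[1]$ which is a quasi-periodic chain map (checked as in Lemma~\ref{cofibrant}). Since $f$ is an acyclic $\mathcal{P}$-fibration, applying $\abcat(P,-)$ gives a short exact sequence $0 \to \abcat(P,K) \to \abcat(P,X) \to \abcat(P,Y) \to 0$ of chain complexes whose last map is a quasi-isomorphism, so $\abcat(P,K)$ is acyclic for every $P \in \mathcal{P}$; thus $K$, and with it $K[1]$, is weakly $\mathcal{P}$-contractible. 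Lemma~\ref{cofibrant} now supplies a quasi-periodic nullhomotopy $h \co Q \to K$ of $F$, and $\tilde{g} := \gamma + j \circ h \circ q$ is the desired lift: $\tilde{g} i = u$ and $f\tilde{g} = v$ follow from $qi = 0$ and $fj = 0$, while the nullhomotopy identity for $h$ together with the sign $d_{K[1]} = -d_K$ yield $d_X\tilde{g} - \tilde{g} d_B = 0$.

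The substance is entirely in the converse, and the main obstacle there is bookkeeping rather than anything conceptual: making the splitting, the degreewise lift $\gamma$, and the nullhomotopy $h$ simultaneously quasi-periodic, and tracking the signs from the shift $K[1]$ so that the closing identity $d_X\tilde{g} - \tilde{g} d_B = 0$ works out. This is the standard argument of \cite[Section~2]{ChrHov02}, carried out in parallel with the proof of Lemma~\ref{cofibrant}.
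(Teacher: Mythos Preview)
Your proof is correct and follows the same route as the paper's: the forward direction is identical, and for the converse both reduce to the cofibrancy of the cokernel via the correction argument of Lemma~\ref{cofibrant}. The paper's version is much terser --- it writes $B = A \oplus C$, $g = (g_A, g_C)$, and appeals to ``a lift $\tilde{g}_C$ similarly to the previous lemma'' --- whereas you have spelled out explicitly the obstruction $\partial$, its factorisation through $Q \to K[1]$, and the nullhomotopy that this phrase is hiding.
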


\begin{proof}
The cokernel of a cofibration is cofibrant as it is the pushout of $i$ along the zero map, and cofibrations are invariant under pushouts. It is a split monomorphism, because by Corollary \ref{cofibrant}, it is a cofibration in $C(\abcat)$.

Now let $i:A \longrightarrow B$ be a degreewise split monomorphism with cofibrant cokernel $C$. We would like to show that $i$ has the LLP with respect to an acyclic fibration $p: X \longrightarrow Y$ as in the following diagram
\[
\xymatrix{
A \ar[r]^{f}\ar[d]^{i} & X \ar@{->>}[d]_{p}^{\sim} \\
B \ar[r]_{g}\ar@{.>}[ur]^{\tilde{g}}& Y.
}
\]
Because $i$ is a split monomorphism, we can write $B=A \oplus C$ and $g=(g_A,g_C)$. Since $C$ is cofibrant, there is a lift $\tilde{g}_C$ similarly to the previous lemma. Hence $\tilde{g}:=(f,\tilde{g}_C)$ is the desired lift in the diagram, so $i$ is a cofibration.
\end{proof}

Using the results of Section \ref{modules} we can consider monoidal structures on
$C^{(T,N)}(\abcat)$, Corollary \ref{modulemodelcor}
gives the following result.

\begin{proposition}\label{relprojectivemonoidal}
Assume that the relative projective model structure on $C(\abcat)$
is a monoidal model category that satisfies the monoid axiom. Assume
further that $T(\mathcal{P})=\mathcal{P}$ and that $T$ is compatible with the
monoidal structure of $\abcat$ in the sense of Definition
\ref{compatible}. Then the induced model structure on
$C^{(T,N)}(\abcat)$ is monoidal and satisfies the monoid axiom.
\end{proposition}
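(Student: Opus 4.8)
The plan is to deduce the statement from the module-category description of $C^{(T,N)}(\abcat)$ set up in Section \ref{modules}, rather than to verify the pushout-product and monoid axioms by hand on quasi-periodic complexes. Recall from that section that, because $T$ is compatible with the monoidal structure, $\mathbb{P}\mathcal{I}$ is a commutative monoid in $C(\abcat)$ and $C^{(T,N)}(\abcat)$ is isomorphic, compatibly with the monoidal products, to the category of $\mathbb{P}\mathcal{I}$-modules with tensor product $X \otimes_{\mathbb{P}\mathcal{I}} Y$; moreover $\mathbb{P}\mathcal{I}\otimes(-) \cong \mathbb{P}(-)$ naturally. Thus the whole assertion becomes an instance of the Schwede--Shipley theory of modules over a commutative monoid, already packaged for us in Proposition \ref{modulemodel} and Corollary \ref{modulemodelcor}.

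Concretely I would proceed as follows. First, note that the relative projective model structure on $C(\abcat)$ is cofibrantly generated (Theorem \ref{Grelmodel}) and, by hypothesis, monoidal and satisfies the monoid axiom, so all the assumptions of Proposition \ref{modulemodel} hold with the fixed commutative monoid taken to be $\mathbb{P}\mathcal{I}$. That proposition then produces a cofibrantly generated model structure on $\mathbb{P}\mathcal{I}$-modules whose weak equivalences and fibrations are created by the forgetful functor to $C(\abcat)$, and whose generating (acyclic) cofibrations are obtained by applying $\mathbb{P}\mathcal{I}\otimes(-) \cong \mathbb{P}(-)$ to the generating (acyclic) cofibrations of $C(\abcat)$. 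These are exactly the weak equivalences, fibrations, and generating sets $\mathbb{P}I$, $\mathbb{P}J$ defining the model structure of Proposition \ref{randomlabel} on $C^{(T,N)}(\abcat)$; since a cofibrantly generated model structure is pinned down by its generating sets together with its class of weak equivalences, the two model structures coincide. Finally, Corollary \ref{modulemodelcor} --- which is \cite[Theorem 4.1]{SchShi00} applied to the commutative monoid $\mathbb{P}\mathcal{I}$ --- gives that this model category is monoidal for $\otimes_{\mathbb{P}\mathcal{I}}$ and satisfies the monoid axiom, which is precisely the conclusion.

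The step I expect to need the most care is not a model-category argument but the algebra already implicit in Section \ref{modules}: one must be certain that $\mathbb{P}\mathcal{I}$ really is a \emph{commutative} monoid, and it is here that the hypotheses that $T$ is compatible with the monoidal structure (Definition \ref{compatible}) and that $T(\mathcal{P}) = \mathcal{P}$ genuinely enter --- the commutativity isomorphism must be checked against the signs produced by the shift $[-kN]$ in $\mathbb{P}\mathcal{I} = \bigoplus_{k} T^k \mathcal{I}[-kN]$. One also needs the isomorphism between $C^{(T,N)}(\abcat)$ and the category of $\mathbb{P}\mathcal{I}$-modules to be monoidal, so that the product $\otimes_{\mathbb{P}\mathcal{I}}$ coming out of the module picture is indeed the monoidal product on $C^{(T,N)}(\abcat)$ that the statement refers to. Everything beyond these bookkeeping points is a direct application of results already in place.
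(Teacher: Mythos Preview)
Your proposal is correct and follows essentially the same route as the paper: the proposition is stated as an immediate consequence of Corollary~\ref{modulemodelcor} (i.e.\ \cite[Theorem 4.1]{SchShi00} applied to the commutative monoid $\mathbb{P}\mathcal{I}$), using the identification of $C^{(T,N)}(\abcat)$ with $\mathbb{P}\mathcal{I}$-modules from Section~\ref{modules}. One small correction to your commentary: the hypothesis $T(\mathcal{P})=\mathcal{P}$ is not used for the commutativity of $\mathbb{P}\mathcal{I}$ but rather to ensure $T$ is left Quillen on the relative projective model structure, so that the lifted model structure of Proposition~\ref{randomlabel} exists in the first place.
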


We now turn to our motivating example: the category of
$(A,\Gamma)$-comodules for $(A,\Gamma)$ a flat Adams Hopf algebroid,
see Section \ref{hopfalgebroids}. We assume that this category has a
self-equivalence $T$ that is compatible with the monoidal product.
We introduce a particular projective class, we will use it to
construct the quasi-projective model structure and see that it has some
useful properties.

Remember that $F$ denotes the function object of a closed monoidal
category and $\mathcal{I}$ denotes the unit.

\begin{definition}
The \textbf{dual} of an $(A,\Gamma)$-comodule $M$ is
$DM=F(M,\mathcal{I})$. A comodule is
called \textbf{dualisable} if the natural
map $DM \wedge N \longrightarrow F(M,N)$ is an isomorphism for all
$N$.
\end{definition}

In the case of flat Adams Hopf algebroid, a comodule is dualisable
if and only if it is finitely generated and projective as an underlying
$A$-module \cite[Proposition 1.3.4]{Hov04}. As a consequence, the collection of
isomorphism classes of dualisable comodules is a set $\mathcal{S}$.
The projective class associated to this set \cite[Lemma
1.5]{ChrHov02} gives the the relative projective model structure on
chain complexes of comodules via Theorem \ref{Grelmodel}. We now
exploit the good monoidal properties obtained by choosing the
projective class to come from the dualisable objects. In particular,
we now longer have to worry about asking for
$T(\mathcal{P})=\mathcal{P}$.

\begin{lemma}
If $T$ is compatible with the monoidal structure on the category of $(A,\Gamma)$-comodules,
then $P$ is dualisable if and only if $T^n P$ is dualisable for each $n \in \zz$.
\end{lemma}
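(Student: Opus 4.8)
The plan is to reduce everything to a single shift: since $P \cong T^{-1}(TP)$ and $T^{-1}$ is again compatible with the monoidal structure by Lemma \ref{functioncompatible}, it suffices to prove that $P$ is dualisable if and only if $TP$ is, and then induct on $|n|$ using $T$ and $T^{-1}$. The cleanest way I see to do the one-shift case is to note that the natural isomorphism $m \co T \to T\mathcal{I} \otimes (-)$ of Definition \ref{compatible} exhibits $T\mathcal{I}$ as an \emph{invertible} object of the symmetric monoidal category of $(A,\Gamma)$-comodules, with inverse $T^{-1}\mathcal{I}$: evaluating $m$ and the analogous isomorphism for $T^{-1}$ at $T^{-1}\mathcal{I}$ gives $T\mathcal{I} \wedge T^{-1}\mathcal{I} \cong T(T^{-1}\mathcal{I}) \cong \mathcal{I}$. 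An invertible object of a closed symmetric monoidal category is dualisable, and a smash product of dualisable comodules is dualisable (with dual $DL \wedge DM$); since $TP \cong T\mathcal{I} \wedge P$ and symmetrically $P \cong T^{-1}\mathcal{I} \wedge TP$, dualisability of $P$ and of $TP$ are equivalent.

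If one prefers to stay with the function-object formalism already set up, the same conclusion comes out of a direct computation. Assuming $P$ dualisable, Lemma \ref{functioncompatible} applied with $X = TP$ yields natural isomorphisms $D(TP) = F(TP,\mathcal{I}) \cong T^{-1}DP$ and $F(TP,N) \cong T^{-1}F(P,N)$, while compatibility of $T^{-1}$ (the remark after Definition \ref{compatible}) gives $T^{-1}(DP) \wedge N \cong T^{-1}(DP \wedge N)$. Splicing these four isomorphisms, the canonical map $D(TP) \wedge N \to F(TP,N)$ is identified with $T^{-1}$ applied to $DP \wedge N \to F(P,N)$; the latter is an isomorphism because $P$ is dualisable, and $T^{-1}$ is an equivalence, so $D(TP) \wedge N \to F(TP,N)$ is an isomorphism for every $N$, i.e.\ $TP$ is dualisable.

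The step I expect to require the most care is, in the second approach, the verification that the square assembling those four isomorphisms actually commutes --- that is, that the canonical ``smash-to-function-object'' transformation is carried to $T^{-1}$ of itself by the coherence isomorphisms attached to $T$. This is a diagram chase that unwinds the definition of $DM \wedge N \to F(M,N)$ as the adjoint of the evaluation $DM \wedge M \wedge N \to N$ and uses naturality of $m$, but it needs no input beyond Lemma \ref{functioncompatible} and monoidal coherence. The first approach avoids this by only using closure of dualisable objects under smash products; one could alternatively invoke \cite[Proposition 1.3.4]{Hov04}, noting that the invertible comodule $T\mathcal{I}$ is finitely generated projective of rank one over $A$, so that $- \wedge T\mathcal{I} \cong T$ preserves the class of comodules whose underlying $A$-module is finitely generated projective.
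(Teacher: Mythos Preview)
Your second approach---identifying the canonical map $D(TP)\wedge N \to F(TP,N)$ with $T^{-1}$ applied to the corresponding map for $P$ via the isomorphisms of Lemma~\ref{functioncompatible}---is precisely the paper's proof: the paper draws exactly that commutative square (with vertices $F(TP,X)$, $T^{-1}F(P,X)$, $F(TP,\mathcal{I})\wedge X \cong T^{-1}F(P,\mathcal{I})\wedge X$, and $T^{-1}(F(P,\mathcal{I})\wedge X)$) and simply asserts that the result follows. The commutativity check you flag as the delicate point is taken for granted there.

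Your first approach is a genuinely different and somewhat slicker route. Noting that compatibility of $T$ forces $T\mathcal{I}$ to be invertible (with inverse $T^{-1}\mathcal{I}$), and then using that invertible objects are dualisable and that dualisables are closed under $\wedge$, bypasses the function-object diagram entirely; it also makes the equivalence symmetric in $P$ and $TP$ at one stroke, without needing to argue separately that the diagram identifies the two canonical maps in the right direction. The only mild wrinkle relative to the paper's presentation is that closure of dualisables under $\wedge$ is stated as the lemma immediately \emph{after} this one---but that statement is a general fact about closed symmetric monoidal categories (the paper attributes it to \cite{LewMaySte86}) and does not depend on the present lemma, so there is no circularity. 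Your alternative via \cite[Proposition~1.3.4]{Hov04} also works and is specific to the comodule setting.
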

\begin{proof}
We have the following commutative diagram (see Lemma
\ref{functioncompatible}), from which the result follows.
$$
\xymatrix{
F(TP,X) \ar[r]^\cong \ar[d] & T^{-1} F(P,X) \ar[dd]^\cong  \\
F(TP,\mathcal{I}) \smashprod X \ar[d]^\cong \\
T^{-1} F(P,\mathcal{I}) \smashprod X \ar[r]^\cong &
T^{-1} (F(P,\mathcal{I}) \smashprod X)
}
$$
\end{proof}

\begin{lemma}
The monoidal product of two dualisable comodules is dualisable.
The dual of a dualisable comodule is dualisable.
There is a natural map $X \to DDX$ that is an isomorphism if $X$ is dualisable.
\end{lemma}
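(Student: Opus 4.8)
The plan is to reduce all three assertions to standard facts about finitely generated projective modules over the commutative ring $A$, using the characterisation \cite[Proposition 1.3.4]{Hov04} already quoted above: over a flat Adams Hopf algebroid a comodule is dualisable precisely when it is finitely generated and projective as an underlying $A$-module. Two elementary observations about comodules will handle the bookkeeping. First, the forgetful functor from $(A,\Gamma)\mbox{-comod}$ to $A\mbox{-mod}$ is faithful and reflects isomorphisms: if a comodule map is an isomorphism of underlying modules, a short diagram chase with the coaction maps shows that its inverse is again a comodule map. Second, for a finitely presented comodule $M$ one has $F(M,N) \cong A\mbox{-mod}(M,N)$ as $A$-modules (recalled in Section \ref{hopfalgebroids}), and the monoidal product $M \smashprod N$ has underlying module $M \otimes_A N$ (also from Section \ref{hopfalgebroids}).

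Granting these, the first statement is then immediate: if $M$ and $N$ are dualisable, then $M \otimes_A N$ is finitely generated projective over $A$ (the tensor product over $A$ of two such modules is again of this type), so $M \smashprod N$ is dualisable. If one prefers an intrinsic argument avoiding the $A$-module characterisation, one would chase adjunctions: since $N$ is dualisable, $F(M \smashprod N, P) \cong F(M, F(N,P)) \cong F(M, DN \smashprod P) \cong DM \smashprod DN \smashprod P$, the last step using dualisability of $M$ applied to the object $DN \smashprod P$; meanwhile $D(M \smashprod N) = F(M \smashprod N, \mathcal{I}) \cong F(M, DN) \cong DM \smashprod DN$, and smashing with $P$ one checks that the resulting isomorphism is the canonical comparison map. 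For the second statement, a dualisable $M$ is in particular finitely presented over $A$, so $DM = F(M,\mathcal{I})$ has underlying module $A\mbox{-mod}(M,A) = \hom_A(M,A)$, which is finitely generated projective because $M$ is; hence $DM$ is dualisable.

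For the third statement, the natural transformation $X \to DDX$ is the one present in every closed symmetric monoidal category: it is the adjunct, under the $\smashprod$--$F$ adjunction, of the composite $X \smashprod DX \xrightarrow{\sim} DX \smashprod X \longrightarrow \mathcal{I}$ whose second map is the evaluation (the counit adjoint to $\id_{DX}$), and naturality in $X$ is automatic. To see that it is an isomorphism when $X$ is dualisable, I would apply the forgetful functor to $A$-modules: under the identifications above it carries $X \to DDX$ to the classical biduality map $X \to \hom_A(\hom_A(X,A),A)$, which is an isomorphism for $X$ finitely generated projective over $A$; since the forgetful functor reflects isomorphisms, $X \to DDX$ is an isomorphism.

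The step I expect to be the main obstacle is the bookkeeping in the last paragraph: one must know that the abstract evaluation $DX \smashprod X \to \mathcal{I}$ and the biduality map, formed inside the closed symmetric monoidal category of comodules, restrict along the forgetful functor to the usual evaluation and biduality maps of $A$-modules. This amounts to the compatibility of $F(-,-)$ with $\hom_A(-,-)$ on finitely presented comodules, for which I would cite \cite[Subsection 1.3]{Hov04}. An alternative that side-steps the issue entirely is to prove first that a comodule is dualisable in the sense of our definition if and only if it admits a strong dual, namely a coevaluation $\mathcal{I} \to X \smashprod DX$ and an evaluation $DX \smashprod X \to \mathcal{I}$ satisfying the triangle identities; once that equivalence is in place, all three statements become the usual formal consequences of the calculus of dualisable objects in a symmetric monoidal category.
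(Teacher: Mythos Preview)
Your argument is correct, but the paper takes a different and rather more economical route: it does not prove the lemma at all in the comodule setting, and instead simply observes that all three statements are formal consequences of the definition of dualisability in \emph{any} closed symmetric monoidal category, citing \cite[Chapter III]{LewMaySte86}. In other words, the paper's ``proof'' is precisely the alternative you sketch in your final sentence. Your main line of argument, by contrast, passes through the concrete characterisation of dualisable comodules as finitely generated projective $A$-modules and reduces everything to classical commutative algebra. This has the advantage of being self-contained and explicit, and it makes clear why the flat Adams hypothesis is doing work; the cost is the bookkeeping you flag in your last paragraph, checking that the abstract biduality map in comodules forgets to the module-theoretic one. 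The paper's approach avoids that bookkeeping entirely, at the price of invoking a reference for the general symmetric-monoidal calculus of duals.
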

All of these results on dualisable objects hold in a general closed monoidal category,
see \cite[Chapter III]{LewMaySte86} for a detailed description.
Together with Proposition \ref{relprojectivemonoidal} and
\cite[Proposition 2.1.4]{Hov04} we obtain the following corollary.

\begin{corollary}
In the context of Section \ref{Franke}, $C^1(\mathcal{A})$ with the
relative projective model structure is monoidal and satisfies the
monoid axiom.
\end{corollary}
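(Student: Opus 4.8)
The plan is to assemble three earlier results and check that the hypotheses line up. The target corollary asserts that $C^1(\mathcal{A})$ with the relative projective model structure is a monoidal model category satisfying the monoid axiom. The obvious route is to invoke Proposition \ref{relprojectivemonoidal}, whose conclusion is exactly this, modulo verifying its hypotheses: (a) the relative projective model structure on $C(\mathcal{A})$ is monoidal and satisfies the monoid axiom; (b) $T(\mathcal{P})=\mathcal{P}$; and (c) $T$ is compatible with the monoidal structure in the sense of Definition \ref{compatible}.

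First I would dispose of (c): it is precisely the lemma proved earlier in Section \ref{Franke} showing that $T$ on $\mathcal{A}$ is compatible with the monoidal product, so nothing new is needed. Next, (b): by the lemma immediately preceding this corollary, dualisability is preserved by $T^n$ for all $n\in\zz$, and the projective class $\mathcal{P}$ here is the one generated (via retracts and coproducts, \cite[Lemma 1.5]{ChrHov02}) by the set $\mathcal{S}$ of isomorphism classes of dualisable comodules. Since $T$ is an equivalence it preserves retracts and coproducts, so $T$ carries this generating set to itself and hence $T(\mathcal{P})=\mathcal{P}$; in particular $T$ is a left Quillen functor, as noted in Section \ref{relativeprojective}.

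The substantive input is (a). Here I would cite Hovey's Theorem (the ``Hovey'' theorem in Section \ref{hopfalgebroids}): for a flat Adams Hopf algebroid $(A,\Gamma)$ the relative projective model structure on $C(A,\Gamma)$ is cofibrantly generated, proper, stable, monoidal, and satisfies the monoid axiom. We are in exactly this situation because $(E(1)_*,E(1)_*E(1))$ is a flat Adams Hopf algebroid (\cite[Theorem 1.4.9]{Hov04}) and $\mathcal{A}\cong(E(1)_*,E(1)_*E(1))\mbox{-comod}$ by the Bousfield--Clarke--Crossley--Whitehouse theorem; one should also note, as the text does, that \cite[Proposition 2.1.4]{Hov04} identifies the relative projective model structure built from the dualisable projective class with Hovey's, so the two notions coincide and the monoidal/monoid-axiom conclusions transfer. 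With (a), (b), (c) in hand, Proposition \ref{relprojectivemonoidal} applies verbatim with $\abcat=\mathcal{A}$ and $N=1$, $T=T^{p-1}$, giving the claim.

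The only place where any care is genuinely required is matching up the two descriptions of the projective class and confirming that Hovey's monoidal statement is about the same model structure that Theorem \ref{Grelmodel} and Proposition \ref{relprojectivemonoidal} use; this is what \cite[Proposition 2.1.4]{Hov04} provides, so I expect this bookkeeping step — rather than any proof — to be the main (and minor) obstacle. Everything else is a direct citation.
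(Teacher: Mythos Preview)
Your proposal is correct and matches the paper's approach: the paper states the corollary as an immediate consequence of Proposition \ref{relprojectivemonoidal} together with \cite[Proposition 2.1.4]{Hov04}, and you have simply unpacked the verification of the three hypotheses (monoidality and the monoid axiom for $C(\mathcal{A})$ via Hovey's result for flat Adams Hopf algebroids, $T(\mathcal{P})=\mathcal{P}$ via the preceding lemma on dualisables, and compatibility of $T$ from Section \ref{Franke}). Your elaboration is accurate and adds no new ideas beyond what the paper's one-line justification intends.
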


The monoidal product of two objects $M$ and $N$ of
$C^{(T,N)}((A,\Gamma)\mbox{-comod})$ is given by  $M
\smashprod_{\mathbb{P} \mathcal{I}} N$. This product is particularly well
behaved,  as well as satisfying the pushout product axiom, we have
the lemma below, which will make it easier to calculate the derived
monoidal product on $D^{(T,N)}(\abcat)$.

\begin{lemma}
If $X$ is a $\mathcal{P}$-cofibrant quasi-periodic chain complex,
then the functor
$X \smashprod_{\mathbb{P} \mathcal{I}} (-)$ preserves $\mathcal{P}$-equivalences. Furthermore
every $\mathcal{P}$-cofibrant object $X$ of $C^1(\abcat)$ is a
retract of some $\mathbb{P}Y$, where $Y$ is an $I$-cell complex of
$C((A,\Gamma)\mbox{-comod})$.
\end{lemma}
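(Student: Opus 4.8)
The plan is to prove the second (structural) assertion first and deduce the first from it. Throughout write $U\co C^{(T,N)}(\abcat)\to C(\abcat)$ for the forgetful functor and recall from Section~\ref{modules} that $\mathbb{P}Y$ is the free $\mathbb{P}\mathcal{I}$-module on the chain complex $Y$; consequently, for any quasi-periodic complex $W$ there is a natural ``free module'' isomorphism
\[
\mathbb{P}Y\smashprod_{\mathbb{P}\mathcal{I}}W\;\cong\;Y\smashprod UW
\]
in $C(\abcat)$ (extension and restriction of scalars along $\mathcal{I}\to\mathbb{P}\mathcal{I}$; cf.\ $\mathbb{P}C\smashprod_{\mathbb{P}\mathcal{I}}\mathbb{P}D=\mathbb{P}(C\smashprod_\mathcal{I}D)$ from Section~\ref{Franke}). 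This isomorphism is the bridge between the monoidal product on quasi-periodic complexes and Hovey's theorem on the relative projective model structure.

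For the structural assertion, the model structure on $C^{(T,N)}(\abcat)$ is cofibrantly generated with generating cofibrations $\mathbb{P}I$ (Proposition~\ref{randomlabel}), so a $\mathcal{P}$-cofibrant $X$ is a retract of a relative $\mathbb{P}I$-cell complex $Z$, assembled from pushouts of coproducts of the maps $\mathbb{P}(S^{n-1}P\to D^nP)$ with $P\in\mathcal{S}$. Applying $U$ — which preserves all colimits, having the right adjoint $\mathbb{R}$ — one sees that $U\mathbb{P}(S^{n-1}P\to D^nP)\cong\bigoplus_k\bigl(S^{n-1-kN}T^kP\to D^{n-kN}T^kP\bigr)$ is a coproduct of generating cofibrations of $C(\abcat)$, since each $T^kP$ is again dualisable; hence $UZ$, and with it $UX$, is an $I$-cell complex of $C(\abcat)$. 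It remains to upgrade this to the statement that $X$ itself is a retract of $\mathbb{P}Y$ for an honest $I$-cell complex $Y$. The delicate point is that the attaching maps of $Z$ are quasi-periodic, not periodifications of chain maps, so $Z$ is not literally $\mathbb{P}$ of a cell complex; the way around this is to re-index the cellular structure, splitting each attaching cycle of $\mathbb{P}Y_\lambda=\bigoplus_kT^kY_\lambda[-kN]$ into its summands (each a cycle of $Y_\lambda$, nonzero in only finitely many spots because the cell domains are finitely generated) and attaching the cells $D^{n-kN}T^{-k}P$ to $Y_\lambda$ along them. This enlarges the cell structure to an $I$-cell complex $Y$ of $C(\abcat)$ and should exhibit $Z$, hence $X$, as a retract of $\mathbb{P}Y$. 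I expect this re-indexing step to be the main thing to get right.

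Granting the structural assertion, the first claim follows quickly. Let $g\co W\to W'$ be a $\mathcal{P}$-equivalence in $C^{(T,N)}(\abcat)$, so $Ug$ is a $\mathcal{P}$-equivalence in $C(\abcat)$, and write $X$ as a retract of $\mathbb{P}Y$ with $Y$ an $I$-cell complex, in particular $\mathcal{P}$-cofibrant in $C(\abcat)$. By the free module isomorphism, $U(\mathbb{P}Y\smashprod_{\mathbb{P}\mathcal{I}}g)\cong Y\smashprod Ug$, which is a $\mathcal{P}$-equivalence in $C(\abcat)$ by the theorem of Hovey recalled in Section~\ref{hopfalgebroids} (if $Y$ is cofibrant and $Ug$ is a projective equivalence then $Y\smashprod Ug$ is a projective equivalence). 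Since $\mathcal{P}$-equivalences in $C^{(T,N)}(\abcat)$ are detected by $U$, the map $\mathbb{P}Y\smashprod_{\mathbb{P}\mathcal{I}}g$ is a $\mathcal{P}$-equivalence. Finally $X\smashprod_{\mathbb{P}\mathcal{I}}g$ is a retract of $\mathbb{P}Y\smashprod_{\mathbb{P}\mathcal{I}}g$ in the category of arrows — a functor preserves retracts — and $\mathcal{P}$-equivalences are closed under retracts, so $X\smashprod_{\mathbb{P}\mathcal{I}}g$ is a $\mathcal{P}$-equivalence. Hence $X\smashprod_{\mathbb{P}\mathcal{I}}(-)$ preserves $\mathcal{P}$-equivalences.
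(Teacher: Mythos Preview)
Your outline is the paper's: establish the structural claim (a $\mathcal{P}$-cofibrant $X$ is a retract of some $\mathbb{P}Y$ with $Y$ an $I$-cell complex), then deduce the first assertion from the free-module identification $\mathbb{P}Y\smashprod_{\mathbb{P}\mathcal{I}}(-)\cong Y\smashprod_{\mathcal{I}}(-)$, Hovey's result that cofibrant $Y$ in $C((A,\Gamma)\text{-comod})$ makes $Y\smashprod(-)$ preserve $\mathcal{P}$-equivalences, and closure of $\mathcal{P}$-equivalences under retracts. The deduction of the first statement is essentially word-for-word the paper's proof.

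Where you and the paper differ is in how much you say about the structural claim. The paper dispatches it in a single sentence: ``Since $0=\mathbb{P}0$, it follows that $Z=\mathbb{P}Y$, where $Y$ is a colimit of pushouts of coproducts of maps in $I$,'' invoking only that $\mathbb{P}$ is a left adjoint. You correctly observe that this glosses over the point that an attaching map $\mathbb{P}A\to Z_\lambda=\mathbb{P}Y_\lambda$ in $C^{(T,N)}(\abcat)$ need not be $\mathbb{P}$ of a map $A\to Y_\lambda$ (its adjoint lands in $U\mathbb{P}Y_\lambda=\bigoplus_kT^kY_\lambda[-kN]$, not in $Y_\lambda$). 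The paper does not address this, so your caution is warranted and your proposal is at least as detailed as the paper here. That said, your re-indexing fix as written does not obviously close the gap either: splitting $\hat f$ into its finitely many nonzero components $\hat f_k$ and attaching a separate cell to $Y_\lambda$ for each $k$ yields, after applying $\mathbb{P}$, a complex with $|K|$ copies of the cell $\mathbb{P}B$ (since $\mathbb{P}S^{n-1+kN}(T^{-k}P)\cong\mathbb{P}S^{n-1}P$ for every $k$), whereas $Z_{\lambda+1}$ has a single copy attached along the sum $\sum_k\tilde f_k$; so one does not immediately obtain $Z$ as a retract of $\mathbb{P}Y$. If you want to make this step rigorous you will need a further argument relating these two pushouts, but this is a point on which the paper is silent as well.
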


\begin{proof}
The quasi-periodic chain complex $X$ is a retract of some
$\mathbb{P}I$-cell complex $Z$. The quasi-periodic chain complex $Z$
is constructed as a colimit of pushouts of coproducts of maps in
$\mathbb{P}I$, where $Z_0=0$. Since $0 = \mathbb{P} 0$, it follows
that $Z= \mathbb{P}Y$, where $Y$ is a colimit of pushouts of
coproducts of maps in $I$. This proves the second statement. For the
first, take $X$, $Y$ and $Z$ as above, then
$$\mathbb{P}Y \smashprod_{\mathbb{P} \mathcal{I}} (-) \cong
Y \smashprod_{\mathcal{I}} (-).$$

We know that $Y$ is cofibrant in
$C((A,\Gamma)\mbox{-comod})$, hence by \cite[Proposition 2.1.4]{Hov04}
this functor preserves $\mathcal{P}$-equivalences.
The functor
$X \smashprod_{\mathbb{P} \mathcal{I}} (-)$ is a retract
of this functor and hence also preserves $\mathcal{P}$-equivalences.
\end{proof}

Focusing upon Franke's exotic model we compare the relative projective  model structure with the injective model structure on Franke's model.

\begin{proposition}\label{quillenstep}
The identity functor provides a Quillen adjoint pair between
$C^1(\mathcal{A})$ with the injective model structure and the
relative projective model structure. However, the two model
structures are not Quillen equivalent.
\end{proposition}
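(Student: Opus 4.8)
The plan is to establish the two halves of the statement separately. For the first half, I would show that the identity functor $\id \co C^1(\mathcal{A})_{\mathrm{inj}} \to C^1(\mathcal{A})_{\mathrm{rel}}$ is a left Quillen functor, equivalently that $\id$ the other way is right Quillen. The quickest route is to compare the two model structures at the level of (co)fibrations: both model structures have the monomorphisms (resp. degreewise split monomorphisms with suitably cofibrant cokernel) as cofibrations, and a $\mathcal{P}$-cofibration is in particular a degreewise split monomorphism, hence a monomorphism, hence an injective-cofibration. So $\id$ from the relative projective structure to the injective structure preserves cofibrations. For weak equivalences, a $\mathcal{P}$-equivalence between $\mathcal{P}$-cofibrant objects is a quasi-isomorphism; more to the point, one checks directly that a $\mathcal{P}$-equivalence is always a quasi-isomorphism, using that the dualisable comodules generate enough to detect homology — concretely, the unit $\mathcal{I}$ (or $A$) is dualisable, so $\abcat(A,-) \cong (-)$ on underlying $A$-modules detects homology, and thus every $\mathcal{P}$-equivalence is a quasi-isomorphism. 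Hence $\id$ from the relative projective structure to the injective structure preserves both cofibrations and trivial cofibrations, so it is left Quillen and we have the desired Quillen pair.

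For the second half — that the pair is \emph{not} a Quillen equivalence — I would argue by contradiction. A Quillen adjunction given by identity functors is a Quillen equivalence if and only if the two classes of weak equivalences agree (since then the derived unit and counit are isomorphisms). So it suffices to exhibit a quasi-isomorphism in $C^1(\mathcal{A})$ that is not a $\mathcal{P}$-equivalence. This is exactly the point flagged in the remark before Proposition \ref{randomlabel}: quasi-isomorphisms need not be $\mathcal{P}$-equivalences because the relative projective structure only sees homology \emph{through the dualisable comodules}, and $\mathcal{A} \simeq E(1)_*E(1)\mbox{-comod}$ has objects whose homology vanishes after applying $\abcat(P,-)$ for all dualisable $P$ without vanishing outright. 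The cleanest witness is a complex built from $\mathcal{I} \otimes_{\zz_{(p)}} \qq$: there is a map $\mathcal{I} \to \mathcal{I}\otimes_{\zz_{(p)}}\qq$ whose mapping cone $K$ is concentrated so that $H_*(K)$ is $p$-torsion-free-killing (i.e. built from $\zz/p^\infty$-type pieces), and such a $K$ is weakly $\mathcal{P}$-contractible because every dualisable, hence finitely generated projective underlying $A$-module, maps trivially into such torsion homology in the limit; yet $K \not\simeq 0$ in $C(\mathcal{A})$. Alternatively, and more robustly, I would invoke the counterexample already assembled at the end of Section \ref{Franke}: the objects $\mathbb{P}(\mathcal{I}\otimes\qq)$ and $\mathbb{P}(\mathcal{I}\otimes\zz/p)$ together with $\mathbb{P}\mathcal{I}$ show the two homotopy theories genuinely differ, since in $D^1(\mathcal{A})$ (injective) these periodified rationalisations and mod-$p$ reductions fit into nontrivial triangles that are forced to split once one passes to the relative projective homotopy category.

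Assembling this: $\id$ is left Quillen from the injective to the relative projective structure \emph{also} would need the reverse inequality of weak equivalences; since it fails, no Quillen equivalence in either direction exists. I expect the main obstacle to be the second half — producing an \emph{explicit} quasi-isomorphism that is provably not a $\mathcal{P}$-equivalence, or equivalently a nontrivial object that is weakly $\mathcal{P}$-contractible. The verification that a specific complex (coming from $\mathcal{I}\otimes_{\zz_{(p)}}\qq$ or from a $\zz/p$-reduction) is genuinely $\mathcal{P}$-acyclic requires understanding how $\abcat(P,-)$ interacts with rational and torsion comodules for $P$ dualisable, i.e. finitely generated projective over $E(1)_*$. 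Once that lemma is in place, the contradiction with Quillen equivalence is formal: a Quillen equivalence between model structures with the same underlying category and the same cofibrations must have the same weak equivalences, which we will have shown is false.
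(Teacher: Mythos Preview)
Your first half reaches the right conclusion but the key justification is wrong. In the category of $(A,\Gamma)$-comodules, $\abcat(A,M)$ is \emph{not} the underlying $A$-module of $M$; it is the submodule of primitives $\{m : \psi_M(m)=1\otimes m\}$. So the sentence ``$\abcat(A,-)\cong(-)$ on underlying $A$-modules detects homology'' is false, and your argument that every $\mathcal{P}$-equivalence is a quasi-isomorphism collapses. The statement itself is true --- the paper simply cites \cite[2.1.5]{Hov04}, whose proof uses that $\Gamma$ is a filtered colimit of dualisables --- but you have not proved it. (Also, your opening sentence has the direction reversed: it is $\id$ from the relative projective structure to the injective structure that is left Quillen, as your subsequent argument correctly shows.)

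The second half has a more serious gap: your proposed counterexample is impossible. Once you know $\mathcal{P}$-equivalences are contained in quasi-isomorphisms, there \emph{cannot} exist a weakly $\mathcal{P}$-contractible complex $K$ with $K\not\simeq 0$ in homology --- that would make $0\to K$ a $\mathcal{P}$-equivalence which is not a quasi-isomorphism, contradicting the containment. So the cone of $\mathcal{I}\to\mathcal{I}\otimes_{\zz_{(p)}}\qq$ cannot be $\mathcal{P}$-contractible (and indeed it is not: $A$ is dualisable and $\abcat(A,-)$ picks up primitives in $\qq/\zz_{(p)}$-type homology). You need the \emph{opposite} kind of witness: a quasi-isomorphism that fails to be a $\mathcal{P}$-equivalence, equivalently an acyclic complex that is not $\mathcal{P}$-acyclic.

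The paper's counterexample is clean and worth knowing: take $X=E(1)_*$ concentrated in degree zero (this is $\mathcal{P}$-cofibrant since $E(1)_*$ is dualisable) and let $Y$ be an injectively fibrant replacement of $X$. The map $X\to Y$ is a quasi-isomorphism by construction, but applying $\abcat(E(1)_*,-)$ to $Y$ and taking homology computes $\ext^*_{\mathcal{A}}(E(1)_*,E(1)_*)$, which has nonzero higher terms (this is the $E_2$-page of the $E(1)$-Adams spectral sequence). Since $\h_*(\abcat(E(1)_*,X))$ is concentrated in degree zero, the map is not a $\mathcal{P}$-equivalence. Periodifying gives the example in $C^1(\mathcal{A})$. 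The objects from the pushout-product counterexample at the end of Section~\ref{Franke} are not relevant here; that example addresses the failure of monoidality, not the comparison of weak equivalences.
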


\begin{proof}
The identity is a left Quillen functor from $C(\mathcal{A})$ with the relative projective model structure to $C(\mathcal{A})$ with the injective model structure: a $\mathcal{P}$-cofibration is an injective cofibration as it is in particular a monomorphism by Lemma \ref{periodiccofib}. By \cite[2.1.5]{Hov04} a $\mathcal{P}$-equivalence is also a $\h_*$-isomorphism. It follows that we have a Quillen pair between the lifted model structures on $C^1(\mathcal{A})$.

However, to obtain a Quillen equivalence the weak equivalences have to agree between $\mathcal{P}$-cofibrant $X$ and injectively fibrant $Y$. We first show that this is not true for $C(\mathcal{A})$.
Consider the chain complex $X$ that consists of $E(1)_*$ concentrated in degree zero.
Then we take an injectively fibrant replacement $Y$ of $X$. The map $X \to Y$ is a quasi-isomorphism by definition, $X$ is $\mathcal{P}$-cofibrant and $Y$ injectively fibrant. This map is not a $\mathcal{P}$-equivalence.
To see this, just take $P=E(1)_*$ itself. Then $$\h_*(\mathcal{A}(P,X))=\mathcal{A}(P,E(1)_*)$$ concentrated in degree 0. But $$\h_*(\mathcal{A}(P,Y))=\textrm{Ext}_\mathcal{A}^*(E(1)_*,E(1)_*).$$ There are non-trivial higher Ext groups on the right side, so the two homologies are not isomorphic.
One can periodify the above to get the desired counterexample.
\end{proof}

\section{The quasi-projective model structure}\label{quasiprojective}

We saw at the end of Section \ref{relativeprojective} that the
relative projective model structure has fewer weak equivalences than
the injective model structure --- too few for the model categories to
be Quillen equivalent. To fix this deficit we can add weak equivalences to
the relative projective model structure via Bousfield localisation.
As a result we will obtain a model structure for $D^1(\mathcal{A})$ that
still has the nice monoidal properties of the relative projective
model structure.
For clarity, we restrict ourselves
to the case of a flat Adams Hopf algebroid
$(A,\Gamma)$, with a self equivalence $T$ on the category of comodules
which is compatible with the monoidal product. We will comment on the general
case at the end of this section.

We make use of the paper \cite{Bek00} to show that there is a model
structure on the category of chain complexes of comodules where the
cofibrations are the $\mathcal{P}$-cofibrations and the weak
equivalences are the quasi-isomorphisms. We state the theorem we
will use later, it is a theorem of Smith, but appears as
\cite[Theorem 1.7]{Bek00}.

The key is using the notions of a class of maps having the ``solution
set condition'' or being ``accessible''. It is technically
awkward to perform constructions such as Bousfield localisation
using a class of maps rather than a set of maps. However, if the
class of maps satisfies the solution set
condition, then it contains a set such that localising with respect to this set gives the
Bousfield localisation with respect to the whole class. So the solution set condition can be used to
avoid this awkwardness, for the full definition see \cite[Definition 1.5]{Bek00}. Accessibility is a
another technical condition \cite[Definition 1.14]{Bek00},
but in particular, an accessible class
of maps in a locally presentable category has the
solution set condition \cite[Proposition 1.15]{Bek00}.

\begin{theorem}\label{smith}
Let $\mathcal{C}$ be a locally presentable category, $\mathcal{W}$ a subcategory and
$I$ a set of morphisms of $\mathcal{C}$. Suppose they satisfy the criteria:
\begin{itemize}
\item c0: \ $\mathcal{W}$ is closed under retracts and has the 2-out-of-3 property
\item c1: \ $I\inj$ is contained in $\mathcal{W}$
\item c2: \ $I\cof \cap \mathcal{W}$ is closed under taking cell complexes
\item c3: \ $\mathcal{W}$ satisfies the solution set condition at $I$.
\end{itemize}
Setting the weak equivalences to be $\mathcal{W}$, the cofibrations to be $I\cof$
and the fibrations to be $(I\cof \cap W)\inj$,
one obtains a cofibrantly generated model structure on $\mathcal{C}$.
\end{theorem}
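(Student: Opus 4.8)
The plan is to reconstruct the two weak factorisation systems into which a cofibrantly generated model structure decomposes, namely $(I\cof,\,I\inj)$ for the (cofibration, trivial fibration) pair and $(J\cof,\,J\inj)$ for the (trivial cofibration, fibration) pair, where $I$ is given but the set $J$ must be manufactured out of the solution set condition c3. The only genuinely non-formal ingredient is the production of $J$; once it is in hand, the rest is bookkeeping with the lifting and factorisation axioms.

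First I would note that local presentability of $\mathcal{C}$ makes every object $\kappa$-presentable for some regular $\kappa$, and since $I$ is a set a single $\kappa$ bounds all domains of maps in $I$; hence the small object argument applies and factors every map functorially as an $I$-cell complex followed by a map in $I\inj$. Next I would identify $I\inj$ with the \emph{trivial fibrations}, i.e. $I\inj = (I\cof\cap\mathcal{W})\inj \cap \mathcal{W}$. The inclusion $\subseteq$ follows from c1 together with $I\inj = (I\cof)\inj \subseteq (I\cof\cap\mathcal{W})\inj$; for $\supseteq$, given a fibration $p$ lying in $\mathcal{W}$, factor $p = q i$ with $i\in I\cof$ and $q\in I\inj$, deduce $q\in\mathcal{W}$ from c1 and then $i\in\mathcal{W}$ from the 2-out-of-3 property in c0, so that $i$ is a trivial cofibration, $p$ lifts against $i$, and the standard retract argument places $p$ in $I\inj$.

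The crux is producing a \emph{set} $J\subseteq \mathcal{TC} := I\cof\cap\mathcal{W}$ with $J\inj = (I\cof\cap\mathcal{W})\inj$, which will then be the class of fibrations. Here c0 (retract-closure of $\mathcal{W}$) and c2 (closure of $\mathcal{TC}$ under cell complexes, hence under pushouts, coproducts and transfinite composites) guarantee that for \emph{any} such $J$ one automatically has $J\cof\subseteq\mathcal{TC}$, so $\mathcal{TC}\inj\subseteq J\inj$. What c3 buys is a set $J\subseteq\mathcal{TC}$ large enough that in addition $\mathcal{TC}\subseteq J\cof$, i.e. $J$ generates the trivial cofibrations; extracting such a set from the solution set condition at $I$ is the technical heart of the theorem, and for this step I would invoke the accessibility and solution-set machinery of \cite{Bek00} rather than reproduce it. \emph{This is the main obstacle}, everything else being formal.

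Finally, with $J$ available I would assemble the model structure. Completeness and cocompleteness come from local presentability; $\mathcal{W}$ has 2-out-of-3 and is retract-closed by c0; $I\cof$ and $(I\cof\cap\mathcal{W})\inj$ are retract-closed as the left and right classes of a lifting relation; the two factorisations are the small object arguments on $I$ (cofibration then trivial fibration, since $I\inj$ is the class of trivial fibrations) and on $J$ (trivial cofibration then fibration, since $J$-cell complexes lie in $\mathcal{TC}$ and $J\inj$ is the class of fibrations). The lifting axioms hold by construction: a cofibration, being in $I\cof$, lifts against $I\inj = $ trivial fibrations; and a trivial cofibration — which, since the cofibrations are exactly $I\cof$, is precisely a member of $\mathcal{TC} = J\cof$ — lifts against $(I\cof\cap\mathcal{W})\inj = $ fibrations. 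The resulting model structure is then cofibrantly generated, by $I$ and by $J$.
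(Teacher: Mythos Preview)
Your sketch is correct and follows exactly the line of Beke's argument, but there is nothing in the paper to compare it against: the paper does not prove this theorem at all. It is stated as a black box, attributed to Smith, and cited as \cite[Theorem~1.7]{Bek00}; the authors use it only as input to Proposition~\ref{quasimodel}. Your proposal therefore simply unpacks the content of that citation, correctly isolating the production of a generating set $J\subseteq I\cof\cap\mathcal{W}$ from the solution set condition c3 as the one non-formal step and, appropriately, deferring that step back to \cite{Bek00}.
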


The notations $I\cof$ and $I\inj$ are technical
but standard, so we refer the reader to \cite[Subsection 2.1]{Hov99} rather
than recall them here.

We use this theorem to obtain a model structure on quasi-periodic chain complexes whose cofibrations are the $\mathcal{P}$-cofibrations as introduced in Section \ref{relativeprojective} and whose weak equivalences are the quasi-isomorphisms. Remember that our class of relative projectives $\mathcal{P}$ is constructed from the set of isomorphism classes of dualisable objects $\mathcal{S}$.

\begin{proposition}\label{quasimodel}
Let $\mathcal{W}$ be the set of quasi-isomorphisms and let $I$ be the set
$$I=\{ S^{n-1} P \to D^n P | P \in \mathcal{S}, \ n \in \mathbb{Z}  \}.$$
Then the above result gives a cofibrantly generated model structure
on $C((A,\Gamma)\mbox{-comod})$, which we call the quasi-projective model structure:
\begin{itemize}
\item the weak equivalences are the quasi-isomorphisms,
\item the cofibrations are the $\mathcal{P}$-cofibrations,
\item the fibrations are those maps that have the left-lifting-property with respect to the acyclic
cofibrations.
\end{itemize}
\end{proposition}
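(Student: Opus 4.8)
The plan is to apply Smith's recognition theorem, Theorem \ref{smith}, to the category $\mathcal{C} = C((A,\Gamma)\mbox{-comod})$, with $\mathcal{W}$ the quasi-isomorphisms and $I$ the set in the statement, by checking its four hypotheses c0--c3. First I would set the stage: since $(A,\Gamma)$ is a flat Adams Hopf algebroid, $(A,\Gamma)\mbox{-comod}$ is a Grothendieck abelian category (Section \ref{hopfalgebroids}) and hence locally presentable, so $\mathcal{C}$ is locally presentable and Theorem \ref{smith} applies. I would then observe that $I$ is precisely the set of generating cofibrations of the relative projective model structure of Theorem \ref{Grelmodel} for the projective class $\mathcal{P}$ generated by the set $\mathcal{S}$ of isomorphism classes of dualisable comodules; hence $I\cof$ is the class of $\mathcal{P}$-cofibrations and $I\inj$ the class of trivial fibrations of that model structure.

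Conditions c0 and c1 are immediate. c0 is the usual fact that quasi-isomorphisms are closed under retracts and satisfy 2-out-of-3. For c1, a map in $I\inj$ is a trivial fibration of the relative projective model structure, in particular a $\mathcal{P}$-equivalence, and by \cite[2.1.5]{Hov04} --- as used already in the proof of Proposition \ref{quillenstep} --- a $\mathcal{P}$-equivalence is an $\h_*$-isomorphism, so $I\inj \subseteq \mathcal{W}$.

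Condition c2 I would check by hand. By the Christensen--Hovey characterisation, a map in $I\cof$ is a degreewise split monomorphism with $\mathcal{P}$-cofibrant cokernel; if it is also a quasi-isomorphism, the induced degreewise split short exact sequence together with the long exact homology sequence shows its cokernel is acyclic. Being a degreewise split monomorphism with acyclic cokernel is stable under pushouts (a pushout of a split monomorphism is split, and cokernels are preserved by pushout), under coproducts, and under transfinite composition --- the last using that filtered colimits are exact in a Grothendieck category, so homology commutes with them --- while $I\cof$ is closed under cell complexes in any case. Hence any cell complex of maps in $I\cof \cap \mathcal{W}$ again lies in $I\cof \cap \mathcal{W}$.

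The genuinely technical point, and the one I expect to be the main obstacle, is c3: that $\mathcal{W}$ satisfies the solution set condition at $I$. Here I would appeal to accessibility. The subcategory of quasi-isomorphisms in the arrow category of $\mathcal{C}$ is accessible, being the class of maps inverted by the homology functor, which preserves filtered colimits; this is the same input Beke uses to build the injective model structure in \cite[Proposition 3.13]{Bek00}. An accessible class of maps in a locally presentable category satisfies the solution set condition at every set of maps by \cite[Proposition 1.15]{Bek00}, so $\mathcal{W}$ satisfies it at $I$. Since this argument depends only on $\mathcal{W}$ and the ambient category, not on the particular $I$, it transfers verbatim from Beke's setting. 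With c0--c3 verified, Theorem \ref{smith} yields a cofibrantly generated model structure with weak equivalences $\mathcal{W}$, cofibrations $I\cof$ (the $\mathcal{P}$-cofibrations), and fibrations $(I\cof \cap \mathcal{W})\inj$, i.e. the maps with the right lifting property against the acyclic cofibrations --- exactly the quasi-projective model structure as stated.
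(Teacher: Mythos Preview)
Your proof is correct and follows essentially the same route as the paper: apply Theorem \ref{smith}, handle c1 via \cite[2.1.5]{Hov04}, and c3 via accessibility of quasi-isomorphisms from \cite[Proposition 3.13 and Proposition 1.15]{Bek00}. The only difference is in c2. Where you argue directly that degreewise split monomorphisms with acyclic cokernel are stable under the cell-complex operations (using exactness of filtered colimits for the transfinite step), the paper instead observes that $I\cof \cap \mathcal{W}$ is the intersection of two classes each already known to be closed under cell complexes: $I\cof$ as the cofibrations of the relative projective model structure, and the monic quasi-isomorphisms as the acyclic cofibrations of the injective model structure (Lemma \ref{injectivestructure}). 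The paper's version is slicker and sidesteps a small wrinkle in your formulation --- a transfinite composition of split monomorphisms need not itself be split --- though your argument still goes through, since at the limit stage you only need the composite to be a monomorphism with acyclic cokernel, and your invocation of exact filtered colimits (together with $I\cof$ being closed) delivers exactly that.
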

\begin{proof}
Condition c0 is obvious. The set $I$ is the set of generating
cofibrations of the relative projective model structure on
$C((A,\Gamma)\mbox{-comod})$, so $I\inj$ is the class of
acyclic $\mathcal{P}$-fibrations. Hence condition c1 is contained
within \cite[Proposition 2.1.5]{Hov04}  which states that every
projective equivalence is a homology isomorphism.

For condition c2, we know that $I\cof$ is closed under
transfinite composition and under pushouts. We know that the class
of monomorphisms that are quasi-isomorphisms is closed,
this class is the class of acyclic cofibrations of the injective model
structure. Hence their intersection is also closed. By the proof of
\cite[Proposition 3.13]{Bek00}, the quasi-isomorphisms are accessible,
thus by Proposition 1.15 of the same paper, the
solution set condition holds and we see that c3 holds.
\end{proof}

By Proposition \ref{create}, we also obtain a model structure on the
category of quasi-periodic chain complexes since we assumed that
$T(\mathcal{P})=\mathcal{P}$ for our chosen class of relative
projectives.


\begin{corollary}
There is a model structure on the category of quasi-periodic chain
complexes $C^{(T,N)}((A,\Gamma)\mbox{-comod})$ where the weak equivalences are
the quasi-isomorphisms and the cofibrations are degreewise split
monomorphisms with $\mathcal{P}$-cofibrant cokernel. We call this
the \textbf{quasi-projective model structure}.
\end{corollary}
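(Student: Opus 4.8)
The plan is to deduce the Corollary directly from Proposition \ref{quasimodel} via the lifting machinery of Proposition \ref{create}, together with the identification of $\mathcal{P}$-cofibrations given in Lemma \ref{periodiccofib}. First I would observe that Proposition \ref{quasimodel} gives a cofibrantly generated model structure on $C((A,\Gamma)\mbox{-comod})$ with generating cofibrations $I$ (the maps $S^{n-1}P \to D^n P$ for $P \in \mathcal{S}$) and with weak equivalences the quasi-isomorphisms. To apply Proposition \ref{create} I need $T$ to be a left Quillen functor for this structure: since the projective class $\mathcal{P}$ is built from the set $\mathcal{S}$ of dualisable comodules and $T^n$ preserves dualisability (the Lemma on dualisable objects and compatibility of $T$), we have $T(\mathcal{S}) \subseteq \mathcal{S}$ up to isomorphism, hence $T(\mathcal{P}) = \mathcal{P}$, so $T$ preserves $I$ and the acyclic cofibrations and is left Quillen. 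Proposition \ref{create} then furnishes a model structure on $C^{(T,N)}((A,\Gamma)\mbox{-comod})$ in which a map $f$ is a weak equivalence or fibration exactly when $Uf$ is one in $C((A,\Gamma)\mbox{-comod})$; in particular the weak equivalences of the lifted structure are exactly the quasi-isomorphisms.

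Next I would pin down the cofibrations. By the general theory of lifted cofibrantly generated model structures (as used already in Proposition \ref{randomlabel}), the generating cofibrations of the lifted structure are the set $\mathbb{P}I = \{\mathbb{P}S^{n-1}P \to \mathbb{P}D^n P\}$. These are precisely the generating cofibrations of the relative projective model structure on $C^{(T,N)}((A,\Gamma)\mbox{-comod})$ appearing in Proposition \ref{randomlabel}, since the generating set $I$ is literally the same set used in Theorem \ref{Grelmodel} and Proposition \ref{quasimodel}. Therefore the cofibrations of the quasi-projective model structure on quasi-periodic chains coincide with the $\mathcal{P}$-cofibrations, and by Lemma \ref{periodiccofib} these are exactly the degreewise split monomorphisms with $\mathcal{P}$-cofibrant cokernel. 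This gives the stated description of cofibrations. The fibrations are then determined as the maps with the right lifting property against acyclic cofibrations, which is automatic in any model structure and requires no separate argument.

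The only real point requiring care — and the step I expect to be the mild obstacle — is verifying the hypotheses of Proposition \ref{create}, namely that $C((A,\Gamma)\mbox{-comod})$ with the quasi-projective structure is genuinely cofibrantly generated (which is the content of Proposition \ref{quasimodel}, itself resting on Smith's theorem \ref{smith}) and that $T$ is left Quillen for it. The latter is where compatibility of $T$ with the monoidal structure is used: it is what makes $\mathcal{P}$, built from dualisables, stable under $T$, so that $T$ preserves the generating (acyclic) cofibrations and hence is left Quillen. Once these are in place, Proposition \ref{create} applies verbatim and the remainder is the bookkeeping identification of cofibrations above. I would close by remarking that the resulting model structure has the same weak equivalences (quasi-isomorphisms) as Franke's injective model structure and the same cofibrations as the relative projective model structure, which is exactly the hybrid we were aiming for.
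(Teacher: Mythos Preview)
Your proposal is correct and follows essentially the same route as the paper: the paper's argument is the single sentence preceding the Corollary, namely that Proposition~\ref{create} lifts the quasi-projective structure of Proposition~\ref{quasimodel} to $C^{(T,N)}((A,\Gamma)\mbox{-comod})$ because $T(\mathcal{P})=\mathcal{P}$. Your added step of identifying the lifted cofibrations with those of the relative projective structure (via the shared generating set $\mathbb{P}I$) and then invoking Lemma~\ref{periodiccofib} is exactly the bookkeeping the paper leaves implicit in stating the Corollary.
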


\begin{corollary}
The quasi-projective model structure is the Bousfield localisation
of the relative projective model structure with respect to the class of
quasi-isomorphisms.
\end{corollary}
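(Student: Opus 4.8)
The plan is to show that the quasi-projective model structure is the left Bousfield localisation of the relative projective model structure at the class $\mathcal{W}$ of quasi-isomorphisms, by appealing to the defining universal property of the Bousfield localisation rather than running Hirschhorn's machine again. First I would recall that both model structures are cofibrantly generated (the relative projective one by Theorem \ref{Grelmodel} and Proposition \ref{randomlabel}, the quasi-projective one by the preceding corollary), that both are on the same underlying category $C^{(T,N)}((A,\Gamma)\mbox{-comod})$, and that they have exactly the same cofibrations: in the relative projective structure the cofibrations are the degreewise split monomorphisms with $\mathcal{P}$-cofibrant cokernel by Lemma \ref{periodiccofib}, and in the quasi-projective structure they are the same by the corollary above. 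Since the quasi-projective structure has the same cofibrations and (strictly) more weak equivalences --- every $\mathcal{P}$-equivalence is a quasi-isomorphism by \cite[Proposition 2.1.5]{Hov04}, hence by \cite[Proposition 3.13]{Bek00} lifted to quasi-periodic complexes, whereas Proposition \ref{quillenstep} exhibits a quasi-isomorphism that is not a $\mathcal{P}$-equivalence --- the identity functor is a left Quillen functor from the relative projective structure to the quasi-projective one.

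Next I would verify the universal property. Let $M$ be any model category and $F \co C^{(T,N)}((A,\Gamma)\mbox{-comod})_{\mathrm{relproj}} \to M$ a left Quillen functor that sends every quasi-isomorphism between cofibrant objects to a weak equivalence in $M$; I must show $F$ is still left Quillen out of the quasi-projective structure. This is immediate: the cofibrations have not changed, so $F$ preserves cofibrations, and I must only check it preserves acyclic cofibrations of the quasi-projective structure. An acyclic cofibration there is a $\mathcal{P}$-cofibration (hence a degreewise split monomorphism with $\mathcal{P}$-cofibrant cokernel) that is also a quasi-isomorphism. Its source and target need not be cofibrant, but by factoring $0 \to A$ as a relative-projective cofibration followed by a trivial fibration and using that $\mathcal{P}$-cofibrant objects are retracts of $\mathbb{P}I$-cell complexes, one reduces to a quasi-isomorphism between $\mathcal{P}$-cofibrant objects; more directly, any cofibration with $\mathcal{P}$-cofibrant cokernel $C$ fits into a cofibre sequence and one applies Ken Brown's lemma together with the hypothesis on $F$. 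Thus $F$ is left Quillen from the quasi-projective structure, which is precisely the defining property of $L_{\mathcal{W}}$. Conversely, the Bousfield localisation $L_{\mathcal{W}}(\text{relproj})$ is known to exist --- this is exactly what Theorem \ref{smith}/Proposition \ref{quasimodel} established, since the quasi-projective structure is a cofibrantly generated model structure with cofibrations $I\cof$ and weak equivalences $\mathcal{W}$, and such a structure is forced to be $L_{\mathcal{W}}$ by the uniqueness part of the localisation universal property. So the two coincide.

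The step I expect to be the main obstacle is the reduction in the previous paragraph --- confirming that a left Quillen functor out of the relative projective structure which merely kills quasi-isomorphisms between \emph{cofibrant} objects automatically kills all acyclic cofibrations of the quasi-projective structure, including those between non-cofibrant objects. The cleanest route is to argue entirely on the level of cofibres: given an acyclic $\mathcal{P}$-cofibration $i \co A \to B$, its cofibre $C$ is $\mathcal{P}$-cofibrant and, because $i$ is a quasi-isomorphism, $C$ is quasi-isomorphic to $0$, hence by the hypothesis on $F$ together with the Lemma stating that $X \smashprod_{\mathbb{P}\mathcal{I}}(-)$ preserves $\mathcal{P}$-equivalences for $\mathcal{P}$-cofibrant $X$, we get that $F(C) \simeq 0$; then the cofibre sequence $F(A) \to F(B) \to F(C)$ in $M$ forces $F(i)$ to be a weak equivalence. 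Filling in that $C \simeq_{\mathrm{qis}} 0$ lifts to the statement we need about $F$, and that the cofibre sequence is preserved, is where the only genuine work lies; everything else is bookkeeping about cofibrantly generated model structures and the universal property of Bousfield localisation as in \cite{hir03}.
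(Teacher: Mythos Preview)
The paper gives no proof for this corollary; it is meant to be immediate from the construction. The underlying fact is the standard one: if two model structures on the same category share the same class of cofibrations and the weak equivalences of the first are contained in those of the second, then the second is the left Bousfield localisation of the first at its own class of weak equivalences. Here both structures have cofibrations the $\mathcal{P}$-cofibrations, and $\mathcal{P}$-equivalences are quasi-isomorphisms by \cite[Proposition 2.1.5]{Hov04}, so the quasi-projective structure is the localisation at the quasi-isomorphisms. Your closing sentences (``such a structure is forced to be $L_{\mathcal{W}}$ by the uniqueness part of the localisation universal property'') are exactly this observation, and by themselves constitute the whole proof.

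The two preceding paragraphs, in which you try to verify the universal property directly via a cofibre-sequence argument, are both unnecessary and flawed. The gap is in the final step: from $F(C)\simeq 0$ and the cofibre sequence $F(A)\to F(B)\to F(C)$ you conclude that $F(i)$ is a weak equivalence in $M$. This inference is only valid when $M$ is stable and the sequence is a homotopy cofibre sequence; in a general model category a cofibration with weakly contractible cofibre need not be a weak equivalence, and in any case $A$ and $B$ are not assumed cofibrant, so $F$ need not carry the sequence to a homotopy cofibre sequence. The appeal to the lemma on $X\smashprod_{\mathbb{P}\mathcal{I}}(-)$ preserving $\mathcal{P}$-equivalences is also a red herring: nothing about the monoidal product is relevant to showing $F(C)\simeq 0$. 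Delete those two paragraphs and keep only the uniqueness argument; what remains is correct and matches the paper.
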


It should be remarked that a simpler way to construct the quasi-projective model structure can be found in \cite{Col06}. Here, Cole discusses how to construct a model structure on a category from ``mixing'' two existing ones. However, this does not examine whether the resulting model structure is cofibrantly generated, which is what we need to discuss its monoidal properties.

\bigskip

\bigskip

\bigskip

\begin{theorem}\label{uberness}
We have the following diagram of Quillen adjunctions, where all
vertical arrows are identity functors and the horizontal arrows
are periodification and the forgetful functor as introduced in
Proposition \ref{create}.
\[
\xymatrix@d@C+0.1cm{ C^{(T,N)}((A,\Gamma)\mbox{-comod})_{\textrm{rel proj}}
\ar@<1ex>[d]^(0.55){U}\ar@<-1ex>[r] &
C^{(T,N)}((A,\Gamma)\mbox{-comod})_{\textrm{quasi proj}}
\ar[l]\ar@<-1ex>[r]
\ar@<1ex>[d]^(0.55){U} &
C^{(T,N)}((A,\Gamma)\mbox{-comod})_{\textrm{inj}}
\ar@<1ex>[d]^(0.55){U}\ar[l] \\
(A,\Gamma)\mbox{-comod}_{\textrm{rel proj}}
\ar[u]^(0.45){\mathbb{P}}\ar@<-1ex>[r] &
(A,\Gamma)\mbox{-comod}_{\textrm{quasi proj}}
\ar[u]^(0.45){\mathbb{P}}\ar[l]\ar@<-1ex>[r] &
(A,\Gamma)\mbox{-comod}_{\textrm{inj}} \ar[l]
\ar[u]^(0.45){\mathbb{P}} }
\]
Furthermore the injective and quasi-projective model structures are
Quillen equivalent.
\end{theorem}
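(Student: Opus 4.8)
The plan is to split the theorem into two parts: first verify that the displayed diagram consists of Quillen adjunctions which assemble into a commuting square, and then deduce the Quillen equivalence between the injective and quasi-projective structures from the observation that these two model structures on $C^{(T,N)}((A,\Gamma)\mbox{-comod})$ have \emph{exactly the same} class of weak equivalences, namely the quasi-isomorphisms.

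For the vertical arrows, note that each of the three model structures on $C^{(T,N)}((A,\Gamma)\mbox{-comod})$ is created from the corresponding structure on $(A,\Gamma)\mbox{-comod}$ through the forgetful functor $U$ --- this is Proposition \ref{randomlabel} for the relative projective structure, Proposition \ref{create} applied to the quasi-projective structure (legitimate since $T(\mathcal{P})=\mathcal{P}$ for the projective class generated by the dualisable comodules), and Lemma \ref{injectivestructure} for the injective structure --- so in each column $(\mathbb{P},U)$ is a Quillen adjunction with $\mathbb{P}$ the left Quillen functor, by the remark following Proposition \ref{create}. For the horizontal arrows, the identity functor from the relative projective to the quasi-projective structure is left Quillen since the cofibrations coincide ($\mathcal{P}$-cofibrations) and every relative projective acyclic cofibration --- being a $\mathcal{P}$-equivalence --- is a quasi-isomorphism by \cite[Proposition 2.1.5]{Hov04}; the identity functor from the quasi-projective to the injective structure is left Quillen since $\mathcal{P}$-cofibrations are in particular monomorphisms (Lemma \ref{periodiccofib}) and the two structures share the quasi-isomorphisms as weak equivalences. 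The same two arguments, applied on the underlying category, give the horizontal arrows on $(A,\Gamma)\mbox{-comod}$. Commutativity of each square is immediate because $\mathbb{P}$ and $U$ are defined without reference to any model structure and the horizontal functors are identities.

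For the Quillen equivalence I would appeal to the general fact that a Quillen adjunction realised by identity functors between two model structures with the same weak equivalences is automatically a Quillen equivalence. Explicitly: let $X$ be cofibrant in the quasi-projective structure and $Y$ fibrant in the injective structure; under the identity adjunction a map $f \co X \to Y$ is its own adjoint, and since both structures have the quasi-isomorphisms as weak equivalences, $f$ is an injective weak equivalence if and only if it is a quasi-isomorphism if and only if it is a quasi-projective weak equivalence. This is precisely the defining criterion for a Quillen equivalence. Equivalently, the two derived identity functors are inverse equivalences of homotopy categories because both model structures present the localisation of $C^{(T,N)}((A,\Gamma)\mbox{-comod})$ at the quasi-isomorphisms.

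There is no deep obstacle here: the genuine work --- running Smith's theorem (Theorem \ref{smith}) to produce the quasi-projective model structure with the $\mathcal{P}$-cofibrations as cofibrations and the quasi-isomorphisms as weak equivalences --- was already carried out in Proposition \ref{quasimodel}, so the present theorem is essentially a bookkeeping consequence. The one point that deserves a little care is the left-Quillen-ness of the identity from the quasi-projective to the injective structure, which rests entirely on the characterisation of $\mathcal{P}$-cofibrations as degreewise split monomorphisms in Lemma \ref{periodiccofib}; once this is recorded, the coincidence of weak equivalences forces the Quillen equivalence and nothing further is required.
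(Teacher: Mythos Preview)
Your proposal is correct and follows essentially the same approach as the paper: check that the identity functors are left Quillen (rel proj $\to$ quasi proj because cofibrations coincide and $\mathcal{P}$-equivalences are quasi-isomorphisms; quasi proj $\to$ inj because $\mathcal{P}$-cofibrations are monomorphisms and weak equivalences coincide), and then observe that identical weak equivalences force the Quillen equivalence. One cosmetic point: you have interchanged the words ``vertical'' and ``horizontal'' relative to the theorem statement (the \texttt{@d} in the \texttt{xymatrix} rotates the diagram, so the $(\mathbb{P},U)$ pairs are displayed horizontally and the identities vertically), but this does not affect the mathematics.
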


\begin{proof}
The upper vertical pairs are Quillen pairs as the cofibrations are the
same and a weak equivalence in the relative projective model
structure is a quasi-isomorphism. For the lower vertical pairs, a
cofibration in the quasi-projective model structure is a
$\mathcal{P}$-cofibration, hence a monomorphism. The weak
equivalences in both are the quasi-isomorphisms. Thus the identity
functor from the quasi-projective model structure to the injective
model structure preserves cofibrations and weak equivalences, hence
it is a left Quillen functor. This also shows that the
quasi-projective and injective model structures must be Quillen
equivalent as they have the same weak equivalences.
\end{proof}

We note that one could have constructed the quasi-projective model
structure on $C^{(T,N)}((A,\Gamma)\mbox{-comod})$ directly, taking care to show
the category-theoretic conditions of Theorem \ref{smith}.
We are now going to exploit the monoidal properties of this model structure.
We would like to make use of Proposition \ref{modulemodel}, but while the pushout product axiom holds, it is not obvious to us why the monoid axiom would hold for this model structure on $C((A,\Gamma)\mbox{-comod})$ or $C^{(T,N)}((A,\Gamma)\mbox{-comod})$. Thus we prove that we have a monoidal model structure directly. 

\begin{lemma}
The quasi-projective model structure on
$C((A,\Gamma)\mbox{-comod})$ is monoidal.
\end{lemma}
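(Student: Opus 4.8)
The plan is to verify the pushout--product axiom; the unit axiom will come for free. By construction the cofibrations of the quasi-projective model structure on $C((A,\Gamma)\text{-comod})$ are the $\mathcal{P}$-cofibrations, i.e.\ the cofibrations of the relative projective model structure, and both structures carry the same monoidal product $\wedge$. Hence the first half of the pushout--product axiom is automatic: if $f\co U\to V$ and $g\co W\to X$ are $\mathcal{P}$-cofibrations, then $f\square g$ is a $\mathcal{P}$-cofibration because the relative projective model structure is monoidal (Hovey's theorem). In particular $f\square g$ is a degreewise split monomorphism. The unit $\mathcal{I}=A$, concentrated in degree zero, is a dualisable comodule, hence relative projective and $\mathcal{P}$-cofibrant, so the unit axiom holds exactly as in the relative projective case. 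Thus everything reduces to showing that if, in addition, $g$ is a quasi-isomorphism (equivalently, an acyclic cofibration of the quasi-projective structure), then $f\square g$ is a quasi-isomorphism; the case in which $f$ is the acyclic one is symmetric, since $f\square g\cong g\square f$.

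So take $f$ a $\mathcal{P}$-cofibration and $g$ an acyclic $\mathcal{P}$-cofibration, and write $C_f,C_g$ for their cokernels. As $f,g$ are degreewise split monomorphisms there are short exact sequences $0\to U\to V\to C_f\to 0$ and $0\to W\to X\to C_g\to 0$, and since cofibrations are closed under pushout along the zero map, both $C_f$ and $C_g$ are $\mathcal{P}$-cofibrant. A routine chase using right--exactness of $\wedge$ produces a natural isomorphism $\operatorname{coker}(f\square g)\cong C_f\wedge C_g$, and $f\square g$ is itself a monomorphism (being a $\mathcal{P}$-cofibration), so there is a short exact sequence of chain complexes $0\to\bullet\to V\wedge X\to C_f\wedge C_g\to 0$. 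Therefore $f\square g$ is a quasi-isomorphism if and only if $C_f\wedge C_g$ is acyclic. The sequence $0\to W\to X\to C_g\to 0$ together with the hypothesis that $g$ is a quasi-isomorphism shows $C_g$ is acyclic, so it remains to prove that $C_f\wedge(-)$ sends acyclic complexes to acyclic complexes.

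More generally I would prove that $Z\wedge(-)$ preserves quasi-isomorphisms for every $\mathcal{P}$-cofibrant $Z$. Such a $Z$ is a retract of an $I$-cell complex with $I=\{S^{n-1}P\to D^nP\mid P\in\mathcal{S}\}$, built transfinitely from $0$; at each successor stage $Z_\alpha\hookrightarrow Z_{\alpha+1}$ is a degreewise split monomorphism whose cokernel is a coproduct $\bigoplus_i P_i[n_i]$ of shifted dualisable comodules with zero differential. A dualisable comodule is finitely generated projective, hence flat, over $A$, so $\bigl(\bigoplus_i P_i[n_i]\bigr)\wedge(-)$ is exact and preserves quasi-isomorphisms. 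A transfinite induction then shows each $Z_\alpha\wedge(-)$ preserves quasi-isomorphisms: the base case is $Z_0=0$; at successor stages one tensors the degreewise split short exact sequence defining $Z_{\alpha+1}$ with a quasi-isomorphism and applies the five lemma to the long exact homology sequences; at limit stages one uses that $\wedge$ commutes with the (filtered) colimit and that filtered colimits are exact in the Grothendieck category $(A,\Gamma)\text{-comod}$, so homology commutes with them. Passing to the retract $Z$ finishes the claim, and applying $C_f\wedge(-)$ to the acyclic $C_g$ shows $C_f\wedge C_g$ is acyclic. This completes the pushout--product axiom, hence the lemma.

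I expect the genuine obstacle to be this last step --- the ``$K$-flatness'' of $\mathcal{P}$-cofibrant complexes. The complexes in question are unbounded, so degreewise flatness is not enough and one really must run the argument along the cell filtration, using exactness of filtered colimits; some care is also needed to make the identification $\operatorname{coker}(f\square g)\cong C_f\wedge C_g$ and the degreewise splittings fully rigorous.
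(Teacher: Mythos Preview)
Your argument is correct. The identification $\operatorname{coker}(f\square g)\cong C_f\wedge C_g$ goes through because the cofibrations are degreewise split, and your transfinite induction establishing that $\mathcal{P}$-cofibrant complexes are $K$-flat (i.e.\ that $Z\wedge(-)$ preserves quasi-isomorphisms) is sound: the successor step uses degreewise splitting so that tensoring preserves the short exact sequence, and the limit step uses exactness of filtered colimits in the Grothendieck category.

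The paper, however, takes a shorter and more elementary route. Rather than treating arbitrary (acyclic) cofibrations, it reduces to generating sets: the generating cofibrations have the explicit form $P\wedge k$ with $P$ dualisable and $k\co S^{n-1}\mathcal{I}\to D^n\mathcal{I}$, so for any generating acyclic cofibration $f$ one has $(P\wedge k)\square f\cong P\wedge(k\square f)$. The map $k\square f$ is then seen to be a quasi-isomorphism directly, because both its source and target are acyclic (using that $D^n\mathcal{I}\wedge X$ and $D^n\mathcal{I}\wedge Y$ are contractible); finally, tensoring a quasi-isomorphism with the single flat object $P$ preserves it. This avoids the cell-filtration induction entirely. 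Your approach, on the other hand, proves the stronger and independently useful fact that every $\mathcal{P}$-cofibrant complex is $K$-flat, and it does not depend on knowing the generating cofibrations explicitly --- so it would transfer more readily to settings where the generators are less convenient.
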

\begin{proof}
We first note that the unit is cofibrant.
We know that the pushout of two cofibrations is a cofibration,
because the relative projective model structure satisfies the
pushout product axiom. Now consider the pushout product of a
generating cofibration with a generating acyclic cofibration. Let
$k$ be the inclusion $S^{n-1}\mathcal{I} \to D^n \mathcal{I}$, then
$P \smashprod k$ is the general form of any generating cofibration
(where $P$ is a dualisable object). Let $f \co X \to Y$ be a
generating acyclic cofibration. Then the pushout product of $P
\smashprod k$ and $f$ is simply $P$ smashed with the pushout product
of $k$ and $f$. We must check that this map is a quasi-isomorphism.
Consider the following pushout diagram
$$
\xymatrix{
S^{n-1} \mathcal{I} \smashprod X \ar[r] \ar[d] &
D^{n} \mathcal{I} \smashprod X \ar[d] \\
S^{n-1} \mathcal{I} \smashprod Y \ar[r] &
Q \\
}
$$
the left hand vertical map is a monomorphism and a homology
isomorphism (modulo signs for the differential, it is just a
suspension of $f$). It follows that the right hand vertical map is
also a monomorphism and a homology isomorphism as acyclic cofibrations
are preserved by pushouts.

It is easy to see that $D^{n} \mathcal{I} \smashprod X$ has trivial homology, as does
$D^{n} \mathcal{I} \smashprod Y$, whence the pushout product of $k$ and $f$ (see page \pageref{box}),
$$k \square f \co Q \to D^{n} \mathcal{I} \smashprod Y$$ must be a homology isomorphism. We now need to see that
$(k \square f) \smashprod P$ is a homology isomorphism. But this is a statement about underlying
$A$-modules, where $P$ is finitely generated and projective, hence $(k \square f) \smashprod P$ is a homology isomorphism.
\end{proof}

\begin{corollary}
The pushout product axiom holds for $C^{(T,N)}((A,\Gamma)\mbox{-comod})$ with
the quasi-projective model structure and monoidal product
$\smashprod_{\mathbb{P}\mathcal{I}}$.
\end{corollary}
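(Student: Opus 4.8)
The plan is to deduce the pushout product axiom on $C^{(T,N)}((A,\Gamma)\mbox{-comod})$ from the corresponding statement for $C((A,\Gamma)\mbox{-comod})$, which is the content of the preceding lemma, together with the module description of Section~\ref{modules}. Recall from Corollary~\ref{modulemodelcor} that $C^{(T,N)}((A,\Gamma)\mbox{-comod})$ is the category of $\mathbb{P}\mathcal{I}$-modules and its monoidal product is $\smashprod_{\mathbb{P}\mathcal{I}}$. By Proposition~\ref{quasimodel} and its corollary, the generating cofibrations of the quasi-projective structure on quasi-periodic chains are $\mathbb{P}I = \{\mathbb{P}S^{n-1}P \to \mathbb{P}D^n P \mid P \in \mathcal{S}, n \in \zz\}$, and the generating acyclic cofibrations are the periodifications of the generating acyclic cofibrations of $C((A,\Gamma)\mbox{-comod})$. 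So the first step is the standard reduction: the pushout product axiom for a cofibrantly generated model category need only be checked on generating (acyclic) cofibrations, since the class of maps $f$ with $f\square g$ a (acyclic) cofibration for all generating (acyclic) cofibrations $g$ is closed under the saturation operations (pushouts, transfinite composition, retracts).

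Next I would use the key identity, already recorded in Section~\ref{Franke}, that periodification is monoidal: $\mathbb{P}C \smashprod_{\mathbb{P}\mathcal{I}} \mathbb{P}D \cong \mathbb{P}(C \smashprod_{\mathcal{I}} D)$, naturally in $C$ and $D$. Since $\mathbb{P}$ is a left adjoint it preserves pushouts, so for generating maps $g = \mathbb{P}g'$ and $h = \mathbb{P}h'$ of $C^{(T,N)}$ coming from generating maps $g', h'$ of $C((A,\Gamma)\mbox{-comod})$, the pushout product $g \square h$ (formed using $\smashprod_{\mathbb{P}\mathcal{I}}$) is naturally isomorphic to $\mathbb{P}(g' \square h')$, where $g' \square h'$ is the pushout product in $C((A,\Gamma)\mbox{-comod})$ with respect to $\smashprod_{\mathcal{I}}$. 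The lemma immediately preceding this corollary establishes that $g' \square h'$ is a cofibration of the quasi-projective structure on $C((A,\Gamma)\mbox{-comod})$, and is moreover a quasi-isomorphism when one of $g',h'$ is acyclic. It then remains only to observe that $\mathbb{P}$ carries these to the required maps of $C^{(T,N)}$: it takes cofibrations to cofibrations because it is a left Quillen functor for the quasi-projective structures (Theorem~\ref{uberness}), and it preserves quasi-isomorphisms by the lemma of Section~\ref{adjoints} asserting precisely this. Hence $g \square h = \mathbb{P}(g' \square h')$ is a cofibration, and an acyclic cofibration if $g'$ or $h'$ is, completing the check on generators.

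The one point needing care — and the main obstacle — is matching up the generating sets correctly. The generating cofibrations of the quasi-projective structure on $C((A,\Gamma)\mbox{-comod})$ are the $S^{n-1}P \to D^n P$ with $P$ dualisable (a set $\mathcal{S}$), but the preceding lemma's computation was phrased using $k \colon S^{n-1}\mathcal{I} \to D^n\mathcal{I}$ smashed with a dualisable $P$; one should verify these descriptions agree, i.e. that $P \smashprod k$ is exactly the map $S^{n-1}P \to D^n P$, which is clear since $\mathcal{I}$ is the unit and $D^n\mathcal{I}, S^{n-1}\mathcal{I}$ are the indicated complexes concentrated in the stated degrees. With that identification in place the argument is formal. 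An alternative route, if one prefers to avoid invoking the monoidal identity for $\mathbb{P}$, is to apply Proposition~\ref{modulemodel} directly — but as noted in the paragraph preceding the lemma, the monoid axiom is not available here, so the reduction-to-generators argument via $\mathbb{P}$ is the cleaner path.
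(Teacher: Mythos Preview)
Your proposal is correct and follows essentially the same route as the paper: reduce to generating (acyclic) cofibrations, use the identity $\mathbb{P}C \smashprod_{\mathbb{P}\mathcal{I}} \mathbb{P}D \cong \mathbb{P}(C \smashprod D)$ together with the fact that $\mathbb{P}$ preserves pushouts to rewrite the pushout product upstairs as $\mathbb{P}$ applied to the pushout product downstairs, and then invoke the preceding lemma plus the fact that $\mathbb{P}$ preserves quasi-isomorphisms. The paper's proof is terser and handles the cofibration clause by citing the relative projective monoidality directly (the cofibrations coincide), but the content is the same.
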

\begin{proof}
We copy the proof of the above lemma.  We know
that the relative projective model structure is monoidal, hence the
pushout product of two cofibrations in the quasi-projective model
structure is a cofibration. A generating acylic cofibration for this
model structure on $C^1(\mathcal{A})$ has form $\mathbb{P} f
\co \mathbb{P} X \to \mathbb{P} Y$, where $f$ is a  generating
acyclic cofibration for $C^1(\mathcal{A})$. Similarly, a
generating cofibration looks like $\mathbb{P}(P \smashprod k)$, for $k$
and $P$ as in the previous proof. Recall that now we are taking the
product over $\mathbb{P} \mathcal{I}$. Drawing the relevant
diagram it is easy to see that we need $\mathbb{P}( (k \square f)
\smashprod P) $ to be a homology isomorphism. We know that $(k \square
f) \smashprod P$ is a homology isomorphism and $\mathbb{P}$ preserves
homology isomorphisms, since it is just an infinite direct sum of
shifts and applications of $T$, so we are done.
\end{proof}

Recall that a fibration in the relative projective model structure
is, in particular, a surjection \cite[Proposition 2.1.5.]{Hov04}. It
follows that any quasi-projective fibration is a surjection and
similarly that any quasi-projective cofibration is a monomorphism.

\begin{lemma}
The quasi-projective model structure is proper.
\end{lemma}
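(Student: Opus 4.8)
The plan is to check the two halves of properness — left properness (pushouts of weak equivalences along cofibrations are weak equivalences) and right properness (pullbacks of weak equivalences along fibrations are weak equivalences) — separately, and in each case to reduce the statement to facts about the underlying category of chain complexes of comodules, where the weak equivalences are the ordinary quasi-isomorphisms. The key observation driving the reduction is that the forgetful functor $U \co C^{(T,N)}((A,\Gamma)\mbox{-comod}) \to C((A,\Gamma)\mbox{-comod})$ preserves and reflects weak equivalences (both are quasi-isomorphisms by construction of the quasi-projective model structure), and it preserves all limits and colimits since it is both a left and a right adjoint. So it suffices to establish left and right properness in $C((A,\Gamma)\mbox{-comod})$ with the quasi-projective model structure, and then transport along $U$.

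For left properness: a quasi-projective cofibration is in particular a monomorphism (as noted in the paragraph immediately preceding the statement, since it is a $\mathcal{P}$-cofibration). In an abelian category, pushing out a short exact sequence along a monomorphism again yields a short exact sequence with an isomorphic cokernel; concretely, if $i \co A \to B$ is a monomorphism and $f \co A \to A'$ is a quasi-isomorphism, then in the pushout square the map $B \to B \cup_A A'$ has the same cokernel (up to iso) as $i$, and comparing the long exact homology sequences of $0 \to A \to B \to \operatorname{coker}(i) \to 0$ and $0 \to A' \to B\cup_A A' \to \operatorname{coker}(i) \to 0$ via the five lemma shows $B \to B \cup_A A'$ is a quasi-isomorphism. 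This is exactly the standard argument that the injective model structure on chain complexes in a Grothendieck abelian category is left proper, and it applies verbatim here because the cofibrations of the quasi-projective structure are among the monomorphisms. For right properness: a quasi-projective fibration is a surjection (again stated just before the lemma). Dually, pulling back a short exact sequence along an epimorphism gives a short exact sequence with isomorphic kernel, and the five-lemma comparison of long exact sequences shows that the pullback of a quasi-isomorphism along a surjection is a quasi-isomorphism. Equivalently, one can invoke Theorem \ref{relativemodelstructure}'s remark that the relative projective model structure is always proper together with the fact that the quasi-projective structure is a Bousfield localisation of it with the \emph{same} cofibrations, so left properness is inherited directly; but the homological argument above is self-contained and handles both sides uniformly.

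Assembling: given a pushout square in $C^{(T,N)}((A,\Gamma)\mbox{-comod})$ with a cofibration and a weak equivalence, apply $U$ to get a pushout square in $C((A,\Gamma)\mbox{-comod})$ with a monomorphism and a quasi-isomorphism; by the above the resulting map is a quasi-isomorphism downstairs, hence a weak equivalence upstairs since $U$ reflects weak equivalences. The pullback case is identical with ``cofibration/monomorphism'' replaced by ``fibration/epimorphism''. The main obstacle — really the only point requiring care — is verifying that $U$ genuinely preserves the relevant (co)limits and detects weak equivalences, but both are immediate from the setup: $U$ has adjoints on both sides (Section \ref{adjoints}), so it preserves both pushouts and pullbacks, and by definition of the lifted quasi-projective model structure a map $f$ is a weak equivalence precisely when $Uf$ is a quasi-isomorphism.
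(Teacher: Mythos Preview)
Your proof is correct and follows essentially the same approach as the paper: both arguments rest on the observation (recorded just before the lemma) that cofibrations are monomorphisms and fibrations are epimorphisms, so that the long exact sequence in homology together with the five lemma forces pushouts of quasi-isomorphisms along cofibrations and pullbacks along fibrations to remain quasi-isomorphisms. The paper compresses this into a single sentence, while you spell out the five-lemma comparison and the transfer along $U$ explicitly, but the underlying argument is the same.
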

\begin{proof}
The long exact sequence in homology implies that
any model structure on $C((A,\Gamma)\mbox{-comod})$ will be proper as long as weak equivalences coincide
with quasi-isomorphisms, every cofibration is an injection, and every fibration is a
surjection. It follows immediately that the quasi-projective model structure on
$C^{(T,N)}((A,\Gamma)\mbox{-comod})$ is proper.
\end{proof}

We summarise our work in the following theorem.

\begin{theorem}\label{mainresult}
The quasi-projective model structure on $C^{(T,N)}((A,\Gamma)\mbox{-comod})$ is cofibrantly generated, proper and monoidal.
In the special case of $T$, $\mathcal{A}$ and $N$ as in Section \ref{Franke} the homotopy category of this model category is precisely $D^1(\mathcal{A})$.
\end{theorem}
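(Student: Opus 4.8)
The plan is to assemble this theorem from the results established earlier in the section, since each clause has already been proved in isolation. First I would recall that Proposition \ref{quasimodel} produces the quasi-projective model structure on $C((A,\Gamma)\mbox{-comod})$ as a cofibrantly generated model category with generating cofibrations $I$, and then invoke Proposition \ref{create} to lift it to $C^{(T,N)}((A,\Gamma)\mbox{-comod})$; this is legitimate because $T(\mathcal{P})=\mathcal{P}$ for the projective class of dualisable comodules (the lemma comparing dualisability of $P$ and $T^nP$), so $T$ is a left Quillen functor. The lifted structure is cofibrantly generated with generating cofibrations and acyclic cofibrations the periodifications $\mathbb{P}I$ and $\mathbb{P}J$. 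Properness is then exactly the lemma immediately preceding this theorem: its proof reduces properness of any model structure on $C((A,\Gamma)\mbox{-comod})$ to the combination of three facts --- weak equivalences are quasi-isomorphisms, cofibrations are monomorphisms, fibrations are surjections --- and each of these transfers to $C^{(T,N)}((A,\Gamma)\mbox{-comod})$ along the forgetful functor $U$, which detects weak equivalences, cofibrations and fibrations by construction.

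For monoidality I would cite the lemma showing the quasi-projective model structure on $C((A,\Gamma)\mbox{-comod})$ is monoidal together with its corollary, which verifies the pushout-product axiom for $C^{(T,N)}((A,\Gamma)\mbox{-comod})$ with product $\smashprod_{\mathbb{P}\mathcal{I}}$; combined with the observation that the unit $\mathbb{P}\mathcal{I}$ is cofibrant --- it is the periodification of the cofibrant unit $\mathcal{I}$ --- this establishes all the monoidal model category axioms. I would flag here that, unlike the relative projective case of Corollary \ref{modulemodelcor}, we only claim a monoidal model structure and not the monoid axiom, since the latter was not available for the quasi-projective structure; this is harmless for the statement as phrased.

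Finally, for the identification of the homotopy category: in Franke's setting $D^1(\mathcal{A})$ is by definition the homotopy category of $C^1(\mathcal{A})$ with the injective model structure. By Theorem \ref{uberness} the quasi-projective and injective model structures on $C^1(\mathcal{A})$ have the very same class of weak equivalences (the quasi-isomorphisms), and the homotopy category of a model category depends only on its weak equivalences; hence the homotopy category of the quasi-projective structure is literally $D^1(\mathcal{A})$ --- the identity functor being a Quillen equivalence is then automatic. The only point requiring care, and the nearest thing to an obstacle, is checking that the standing hypothesis $T(\mathcal{P})=\mathcal{P}$ invoked throughout Sections \ref{relativeprojective}--\ref{quasiprojective} genuinely applies to Franke's $\mathcal{A}$; this holds because $\mathcal{A}$ is the comodule category of a flat Adams Hopf algebroid, $\mathcal{P}$ is the projective class generated by the dualisable comodules, and a $T$ compatible with the monoidal structure preserves dualisability. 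Everything else is bookkeeping.
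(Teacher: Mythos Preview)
Your proposal is correct and matches the paper's approach exactly: the theorem is stated as a summary of the preceding lemmas and corollaries (cofibrant generation from Proposition \ref{quasimodel} lifted via Proposition \ref{create}, monoidality from the pushout-product corollary, properness from the lemma just before, and the identification of homotopy categories from Theorem \ref{uberness}), and the paper gives no further proof beyond that assembly. Your explicit check that $T(\mathcal{P})=\mathcal{P}$ in Franke's case and your remark that the monoid axiom is not being claimed are both accurate and consistent with the text.
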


\begin{corollary}
Franke's model $D^1(\mathcal{A})$ is a symmetric monoidal category
with tensor product $\smashprod^L_{\mathbb{P}\mathcal{I}}$.
\end{corollary}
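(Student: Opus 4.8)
The plan is to obtain the symmetric monoidal structure on $D^1(\mathcal{A})$ by appealing to the standard fact that the homotopy category of a symmetric monoidal model category is itself symmetric monoidal, with product the total left derived functor of the underlying tensor (see \cite[Theorem 4.3.2]{Hov99}). Most of the hypotheses are already in place: Theorem \ref{mainresult} tells us that the quasi-projective model structure on $C^1(\mathcal{A})$ is cofibrantly generated, proper and monoidal, and that its homotopy category is $D^1(\mathcal{A})$. So the work reduces to two small checks: the unit axiom and the symmetry of the monoidal structure.

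For the unit axiom, note that the monoidal unit of $C^1(\mathcal{A})$ is $\mathbb{P}\mathcal{I}$. Since $\mathbb{P}$ is a left Quillen functor for the quasi-projective model structures and $\mathcal{I}$ is cofibrant in $C((A,\Gamma)\mbox{-comod})$ (as already observed in the proof that that model structure is monoidal), the unit $\mathbb{P}\mathcal{I}$ is itself cofibrant. Hence a cofibrant replacement of the unit is a weak equivalence to the unit, and the unit axiom is automatic. For symmetry, recall from Corollary \ref{modulemodelcor} that $\mathbb{P}\mathcal{I}$ is a \emph{commutative} monoid in the symmetric monoidal category $C(\mathcal{A})$; by the identification of $C^1(\mathcal{A})$ with $\mathbb{P}\mathcal{I}$-modules in Section \ref{modules}, the category $C^1(\mathcal{A})$ therefore carries a symmetric monoidal structure under $\smashprod_{\mathbb{P}\mathcal{I}}$, with the braiding inherited from that of $\otimes$. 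Combined with the pushout product and unit axioms, this makes $C^1(\mathcal{A})$ with the quasi-projective model structure a symmetric monoidal model category.

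It then remains only to invoke \cite[Theorem 4.3.2]{Hov99}: the left Quillen bifunctor $\smashprod_{\mathbb{P}\mathcal{I}}$ has a total left derived functor $\smashprod^L_{\mathbb{P}\mathcal{I}}$ on $D^1(\mathcal{A}) = \Ho(C^1(\mathcal{A}))$ (computed by cofibrantly replacing and then applying $\smashprod_{\mathbb{P}\mathcal{I}}$), and the associativity, unit and symmetry coherence isomorphisms descend to it. I do not expect a genuine obstacle here: once Theorem \ref{mainresult} is granted, the only point needing attention is the unit axiom, and that is immediate from cofibrancy of $\mathbb{P}\mathcal{I}$; everything else is a direct citation of the standard monoidal-model-category machinery.
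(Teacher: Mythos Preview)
Your proposal is correct and follows the same approach as the paper: the corollary is stated immediately after Theorem~\ref{mainresult} with no proof, so the paper treats it as an immediate consequence of the monoidal model structure together with the standard machinery of \cite[Theorem 4.3.2]{Hov99}. You simply spell out the details (cofibrancy of the unit, symmetry from commutativity of $\mathbb{P}\mathcal{I}$) that the paper leaves implicit, which is entirely appropriate and arguably more careful than the paper itself.
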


\begin{rmk}
Consider a general Grothendieck category that is closed monoidal.
Assume first that that the collection of isomorphism classes of
dualisable objects forms a set $\mathscr{S}$. Secondly, assume that
this set generates the category, that is, the coproduct of all
elements of $\mathscr{S}$ is a generator. Then the relative
projective, quasi-projective and injective model structures all
exist for $C^{(T,N)}(\abcat)$. The first two are monoidal model
categories, all three are proper
 and the obvious analogue of
Theorem \ref{uberness} holds.

The extra work required is reproving \cite[Theorem 2.1.5]{Hov04}
in this situation, this is quite straightforward.
The fibrations in the relative projective model structure are all epimorphisms,
so both model structures are proper.  Since dualisables are flat
the pushout product axioms hold.
\end{rmk}

\section{Picard groups}\label{Picard}

In this section we are going to compare the Picard group of
$D^1(\mathcal{A})$ to the Picard group of the $K_{(p)}$-local stable
homotopy category $\Ho(L_1 \mathcal{S})$.
Let $\mathcal{M}$ be a monoidal category with unit $\mathcal{I}$ and product
$\wedge$. The \textbf{Picard group} $\pic(\mathcal{M})$ is the group
of invertible objects in this category: its objects are the
isomorphism classes of $X \in \mathcal{M}$ such that there is an
object $Y \in \mathcal{M}$ with $X \wedge Y \cong \mathcal{I}$. The group
multiplication is induced by $\wedge$.

Picard groups have their origin in algebraic geometry but have
increasingly been studied in stable homotopy theory. Of particular
interest are the Picard groups of Bousfield localisations of the stable
homotopy category or the homotopy category of $R$-modules for an
$E_\infty$-ring spectrum $R$. For example, it is well-known that the
Picard group of the stable homotopy category is $\mathbb{Z}$,
generated by the 1-sphere. This result was later reproved by Baker and
Richter in \cite{BakRic05} who also gave computations of
$\Ho(R-\mbox{mod})$ for some connective $E_\infty$ ring spectra
$R$.

Let $\Ho(L_n \mathcal{S})$ denote the $E(n)$-local stable homotopy
category where $E(n)$ is the $n^{th}$ Johnson-Wilson spectrum. Hovey
and Sadofsky showed that for $n^2 + n < 2p-2 $, $$\pic(\Ho(L_n
\mathcal{S})) \cong \mathbb{Z},$$ consisting of shifts of the sphere
spectrum, see \cite{HovSad99}. Georg Biedermann, in \cite{Bie07}, later
extended the computation to $p$ and $n$ with $ p > n+1$ and $4p-3 >
n^2 + n$. This means we know that for $p$ odd, $\pic(\Ho(L_1
\mathcal{S})) \cong \mathbb{Z}$, consisting of the spheres.

The previous section shows that
that $D^1(\mathcal{A})$ is a symmetric monoidal category,
so it makes sense to consider its Picard group and compare it to $\pic(\Ho(L_1 \mathcal{S}))$.

Let us remember that Franke's model $C^1(\mathcal{A})$ does not only work for the $E(1)$-local stable homotopy category and $p>2$. An analogous construction works for all $n$ and $p$ with $n^2 + n < 2p-2$. Hence in this range, the $E(n)$-local stable homotopy category possesses an exotic algebraic model. Although not obviously related, it is no coincidence that this range for $n$ and $p$ agrees with the range of Hovey's and Sadofsky's Picard group computation. 

Both results use the fact that the $E(n)$-based Adams spectral sequence collapses for those $n$ and $p$. In Franke's proof, the collapsing is used rather indirectly for an algebraic description of some morphisms in $\Ho(L_n \mathcal{S})$. Hovey and Sadofsky show that an element $X$ of $\pic(\Ho(L_n \mathcal{S}))$ satisfies
\[
E(n)_* \cong E(n)_*(X) \quad\mbox{in}\quad E(n)_*E(n)\mbox{-comod}.
\]
Sparseness of the $E(n)$-Adams spectral sequence is a key ingredient for constructing a weak equivalence $L_nS^0 \longrightarrow X$.

While in the ``exotic range'' $n, p$ with $n^2 + n < 2p-2$ the $E(n)$-local Picard group is trivial, this is not the case for $n=1$ and $p=2$. For $p=2$, $$\pic(\Ho(L_1 \mathcal{S})) \cong \mathbb{Z} \oplus \mathbb{Z}/2.$$ The $\mathbb{Z}/2$-summand is generated by the so-called question mark complex \cite[Theorem 6.1]{HovSad99}. Also, we know that for $p=2$, $\Ho(L_1 \mathcal{S})$ is rigid and hence has no exotic models. It would be an interesting topic to relate Picard groups to the existence of exotic models. 

\begin{lemma}
$\pic(D^1(\mathcal{A}))$ is a set.
\end{lemma}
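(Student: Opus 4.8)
The plan is to bound the collection of isomorphism classes of invertible objects in $D^1(\mathcal{A})$ by a set, using the explicit description of $\mathcal{A}$ as $(E(1)_*, E(1)_*E(1))$-comodules and the equivalence $\mathcal{R}$ with $\Ho(L_1\mathcal{S})$. First I would observe that if $X$ is invertible in $D^1(\mathcal{A})$ with the derived product $\smashprod^L_{\mathbb{P}\mathcal{I}}$, then applying the strong monoidal equivalence $\mathcal{R}$ (combined with Ganter's isomorphism, which holds in our setting) shows $\mathcal{R}(X)$ is invertible in $\Ho(L_1\mathcal{S})$. Since $\mathcal{R}$ is an equivalence of categories, isomorphism classes of invertible objects in $D^1(\mathcal{A})$ correspond bijectively to those in $\Ho(L_1\mathcal{S})$; so it suffices to know $\pic(\Ho(L_1\mathcal{S}))$ is a set, which follows from Hovey--Sadofsky's computation that it is $\mathbb{Z}$. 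Alternatively, and more self-containedly, one can argue directly inside $D^1(\mathcal{A})$.

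For the direct argument, the key point is that an invertible object must be ``small'' in a controlled sense. If $X \smashprod^L_{\mathbb{P}\mathcal{I}} Y \cong \mathbb{P}\mathcal{I}$, then $X$ is a retract of a dualisable object in the homotopy category (its inverse supplies the dual), so $X$ is dualisable. Using the $E(1)_*$-homology functor $H_*(-) \colon D^1(\mathcal{A}) \to \mathcal{A}$ (equivalently $(E(1)_*,E(1)_*E(1))$-comodules via $E(1)_*(\mathcal{R}(-))$), invertibility forces $H_*(X)$ to be an invertible $(E(1)_*,E(1)_*E(1))$-comodule, hence in particular a finitely generated projective $E(1)_*$-module of rank one. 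The isomorphism classes of such comodules form a set (they are quotients of a fixed small family of free modules with comodule structure maps landing in a set of possibilities). Then one shows that $X$ is determined up to isomorphism in $D^1(\mathcal{A})$ by $H_*(X)$ together with finitely much extra data: since $X$ is dualisable it is, up to homotopy, a bounded complex built from finitely generated projective comodules, and there are only a set's worth of such chain-level representatives up to chain homotopy equivalence. Quotienting by isomorphism still leaves a set.

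The main obstacle I anticipate is making the phrase ``$X$ is built from a set's worth of data'' precise without circularity: a priori, invertible objects of a triangulated category need not have bounded or finitely generated homology in a naive abelian category, and one must use the specific structure of $\mathcal{A}$ (finiteness conditions on $\mathcal{B}$, the period-$2p-2$ quasi-periodicity, and the fact that dualisables are finitely generated projective over $E(1)_*$ by \cite[Proposition 1.3.4]{Hov04}) to pin down the cardinality. The cleanest route is probably to lean on the monoidal equivalence $\mathcal{R}$: since $\mathcal{R}$ is an equivalence of categories and sends the monoidal product to $\wedge^L$ (via Ganter, whose hypotheses our quasi-projective construction satisfies), it restricts to a bijection $\pic(D^1(\mathcal{A})) \to \pic(\Ho(L_1\mathcal{S}))$, and the latter is a set — indeed $\mathbb{Z}$ — by \cite{HovSad99}. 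This reduces the lemma to a citation and a one-line transport-of-structure argument, sidestepping the chain-level bookkeeping entirely; I would present that as the proof, remarking that it is in fact the first half of the stronger computation carried out in the rest of the section.
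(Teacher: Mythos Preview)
Your argument is correct in outline, but it takes a very different route from the paper. The paper's proof is a one-line citation of \cite[A.2.8, 2.1.3 and 2.3.6]{HovPalStr97}: in an axiomatic stable homotopy category in the sense of Hovey--Palmieri--Strickland, invertible objects are automatically small, and the isomorphism classes of small objects form a set. This is a general fact about sufficiently nice triangulated categories and uses nothing specific about $\mathcal{A}$, Franke's functor, or Ganter's isomorphism.

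By contrast, your preferred argument transports the question along $\mathcal{R}$ to $\Ho(L_1\mathcal{S})$ and then invokes Hovey--Sadofsky. This works, but two remarks are in order. First, a small inaccuracy: $\mathcal{R}$ is \emph{not} a strong monoidal equivalence --- the paper explicitly notes it fails to be associative for $p=3$ and is not known to be associative for $p>5$. What you actually need, and what Ganter supplies, is only the binatural isomorphism $\mathcal{R}(C \smashprod^L_{\mathbb{P}\mathcal{I}} D) \cong \mathcal{R}(C) \wedge^L \mathcal{R}(D)$ together with $\mathcal{R}(\mathbb{P}\mathcal{I}) \cong L_1 S^0$; that is enough to push invertibles to invertibles and hence to inject $\pic(D^1(\mathcal{A}))$ into $\pic(\Ho(L_1\mathcal{S}))$. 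Second, and more substantively, your argument essentially carries out the full computation of the final theorem of the section, so presenting it as a proof of the preliminary lemma renders the lemma redundant. The paper's citation-based proof keeps the lemma logically independent of Ganter and Hovey--Sadofsky, which is why it is placed \emph{before} those results are invoked; your approach reverses that dependency. Your ``direct'' alternative via dualisability and bounding chain-level data is closer in spirit to what the HPS reference actually does, but there is no need to redo it by hand when the general theorem is available.
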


\begin{proof}
Our category $D^1(\mathcal{A})$ is triangulated, so we apply 
\cite[A.2.8, 2.1.3 and 2.3.6]{HovPalStr97}.
\end{proof}

Franke's theorem tells us that $D^1(\mathcal{A})$ and $\Ho(L_1
\mathcal{S})$ are equivalent as triangulated categories via the
functor $\mathcal{R}$. However,
$\mathcal{R}:D^1(\mathcal{A}) \longrightarrow \Ho(L_1 \mathcal{S}) $
is not monoidal as it is not associative \cite[Remark 1.4.2]{Gan07}.
Hence it does not automatically induce a group homomorphism between the respective
Picard groups. Extra work is needed to see that
$\mathcal{R}$ preserves just enough
structure to use it for comparing these Picard groups.

\begin{theorem}[Ganter]\label{ganteriso}
There is a natural isomorphism $$\mathcal{R}(C
\smashprod^L_{\mathbb{P}\mathcal{I}} D) \cong \mathcal{R} ( C ) \wedge^L
\mathcal{R}( D ) $$ where $\wedge^L$ denotes the smash product in
$\Ho(L_1 \mathcal{S}) $, $\mathcal{I}=E(1)_*$
and $\smashprod^L_{\mathbb{P}\mathcal{I}}$ is the
monoidal product of $D^1(\mathcal{A})$.
\end{theorem}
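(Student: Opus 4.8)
The plan is to construct a natural transformation
$\theta_{C,D}\colon \mathcal{R}(C\smashprod^L_{\mathbb{P}\mathcal{I}}D)\longrightarrow \mathcal{R}(C)\wedge^L\mathcal{R}(D)$
which, when one of the variables is the unit $\mathbb{P}\mathcal{I}$, reduces to the canonical identification, and then to observe that $\theta$ is automatically an isomorphism. Both bifunctors $(C,D)\mapsto\mathcal{R}(C\smashprod^L_{\mathbb{P}\mathcal{I}}D)$ and $(C,D)\mapsto\mathcal{R}(C)\wedge^L\mathcal{R}(D)$ are exact in each variable and preserve coproducts: $\mathcal{R}$ is a triangulated equivalence, $\wedge^L$ is the $E(1)$-local smash product, and by Theorem \ref{mainresult} the product $\smashprod^L_{\mathbb{P}\mathcal{I}}$ is the derived tensor of a monoidal model category. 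Since $\mathbb{P}\mathcal{I}$ together with its shifts generates $D^1(\mathcal{A})$ as a localising subcategory (corresponding to the fact that $L_1S^0$ generates $\Ho(L_1\mathcal{S})$), the subcategory of objects on which $\theta$ is an isomorphism in each variable is localising, so it is enough to check $\theta$ when one variable is $\mathbb{P}\mathcal{I}[n]$; there $C\smashprod^L_{\mathbb{P}\mathcal{I}}\mathbb{P}\mathcal{I}[n]\cong C[n]$ and $\mathcal{R}(C)\wedge^L L_1S^n\cong\mathcal{R}(C)[n]$, and unitality of $\theta$ forces it to be the canonical identification. (Alternatively one can verify directly that $\theta$ is an $E(1)_*$-homology isomorphism, hence a weak equivalence in $L_1\mathcal{S}$, using the formula $E(1)_*(\mathcal{R}(E))\cong\bigoplus_{i=0}^{2p-3}H_i(E)[i]$ and the fact that the relevant K\"unneth spectral sequences have only two columns, because the graded global dimension of $E(1)_*=\zz_{(p)}[v_1^{\pm 1}]$ equals that of $\zz_{(p)}$, namely one --- this is the sparseness underlying Franke's theorem.)

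The real work is the construction of $\theta$, which is essentially Ganter's. The crucial input is that the quasi-projective model structure of Section \ref{quasiprojective} realises $C\smashprod^L_{\mathbb{P}\mathcal{I}}D$ as the \emph{point-set} tensor $C\smashprod_{\mathbb{P}\mathcal{I}}D$ of quasi-projective-cofibrant representatives, and that such representatives are retracts of periodifications $\mathbb{P}Y$ with $Y$ an $I$-cell complex of $C((A,\Gamma)\mbox{-comod})$, hence levelwise a sum of dualisable, and therefore flat, comodules. This is exactly what \cite{Gan07} requires but does not have available: a well-defined non-derived tensor product of quasi-periodic complexes, together with a supply of flat cofibrant objects on which to run the Eilenberg--Zilber/K\"unneth comparison maps built into Franke's reconstruction functor. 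Feeding our cofibrant models into Franke's construction --- the homotopy colimit over the relevant diagram category together with the chosen finite spectra --- supplies $\theta$ and its naturality, unitality and symmetry, and thereby closes the gap in \cite{Gan07}.

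The main obstacle is therefore not homological bookkeeping but checking that the point-set monoidal structure produced by Theorem \ref{mainresult} is genuinely compatible with the lax-monoidal data in Franke's construction that Ganter uses. Accordingly the cleanest organisation of the proof is to verify in turn that $(C^{(T,N)}((A,\Gamma)\mbox{-comod}),\smashprod_{\mathbb{P}\mathcal{I}})$ with the quasi-projective model structure --- which by Theorem \ref{mainresult} is cofibrantly generated, proper and monoidal --- satisfies each of the hypotheses of \cite{Gan07}, after which the stated isomorphism is precisely Ganter's, now resting on a complete definition of the derived monoidal product on $D^1(\mathcal{A})$.
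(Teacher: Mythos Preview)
Your proposal is correct and matches the paper's treatment: the paper does not give an independent proof of this theorem but states it as Ganter's result and then explains that Theorem~\ref{mainresult} closes the gap in \cite{Gan07} precisely because quasi-projective cofibrant objects are degreewise flat, so the derived product $\smashprod^L_{\mathbb{P}\mathcal{I}}$ agrees with Ganter's ad hoc flat-replacement construction. Your account is more explicit --- you sketch the localising-subcategory argument and the K\"unneth alternative for why $\theta$ is an isomorphism --- but the core point (supply Ganter's argument with a rigorous monoidal model structure whose cofibrants are levelwise flat) is exactly the paper's.
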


Note that in her theorem, Ganter denotes the derived tensor product
of quasi-periodic chain complexes by $\otimes^L_{E(1)_*}$.
This is not to be confused with the tensor product in $D(\mathcal{A})$.

She defines the monoidal product on $D^1(\mathcal{A})$
as the tensor product of underlying degreewise flat replacements,
i.e. flat as $E(1)_*$-modules. This construction is not accompanied by a rigorous proof, so it is not clear to the authors to what extent this equips $D^1(\mathcal{A})$ with a monoidal structure.
While Ganter mentions the concept of monoidal model categories, she does not address the question of whether $C^1(\mathcal{A})$ is such a category.

Theorem \ref{mainresult} closes this gap in the following way.
For a monoidal model category $\mathcal{M}$
with product $\otimes$, the derived product
$\otimes^L$ on $\Ho(\mathcal{M})$ is defined as $$X \otimes^L Y = QX
\otimes QY$$ where $QX$ and $QY$ are cofibrant replacements of $X,Y
\in \mathcal{M}$ \cite[4.3.2]{Hov99}. Since in our case the cofibrant objects
are also degreewise flat, we are consistent with Ganter's result and can write down the above theorem.
Further, we can use it to compute $\mathcal{R}(C \smashprod^L_{\mathbb{P}\mathcal{I}} D)$.

\begin{theorem}
$\pic(D^1(\mathcal{A})) \cong \mathbb{Z}$
\end{theorem}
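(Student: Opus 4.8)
The plan is to transport the known computation $\pic(\Ho(L_1 \mathcal{S})) \cong \zz$ across Franke's equivalence $\mathcal{R}$, using Ganter's monoidality isomorphism (Theorem \ref{ganteriso}) to show that $\mathcal{R}$ restricts to a bijection on Picard groups. First I would recall that by the discussion above (combining Hovey--Sadofsky \cite{HovSad99} and Biedermann \cite{Bie07}), for $p$ odd $\pic(\Ho(L_1 \mathcal{S})) \cong \zz$ is generated by the shifts $L_1 S^n$ of the sphere. Also $\pic(D^1(\mathcal{A}))$ is a set by the previous lemma, so it makes sense to compare it with $\pic(\Ho(L_1\mathcal{S}))$.

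Next I would show $\mathcal{R}$ induces a well-defined map $\pic(D^1(\mathcal{A})) \to \pic(\Ho(L_1\mathcal{S}))$. If $X \in D^1(\mathcal{A})$ is invertible, say $X \smashprod^L_{\mathbb{P}\mathcal{I}} Y \cong \mathbb{P}\mathcal{I}$, then applying $\mathcal{R}$ and Theorem \ref{ganteriso} gives $\mathcal{R}(X) \wedge^L \mathcal{R}(Y) \cong \mathcal{R}(\mathbb{P}\mathcal{I}) \cong L_1 S^0$ (recall from Section \ref{adjoints} that $\mathbb{P}\mathcal{I}$ corresponds to the localised sphere), so $\mathcal{R}(X) \in \pic(\Ho(L_1\mathcal{S}))$. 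Since $\mathcal{R}$ is an equivalence of categories it is injective on isomorphism classes, so this map is injective. For surjectivity, given an invertible $S \in \Ho(L_1\mathcal{S})$ with inverse $S'$, let $X = \mathcal{R}^{-1}(S)$ and $Y = \mathcal{R}^{-1}(S')$; then $\mathcal{R}(X \smashprod^L_{\mathbb{P}\mathcal{I}} Y) \cong S \wedge^L S' \cong L_1 S^0 \cong \mathcal{R}(\mathbb{P}\mathcal{I})$, so by faithfulness of $\mathcal{R}$ we get $X \smashprod^L_{\mathbb{P}\mathcal{I}} Y \cong \mathbb{P}\mathcal{I}$, i.e. $X$ is invertible and maps to $S$. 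Hence $\mathcal{R}$ gives a bijection of sets $\pic(D^1(\mathcal{A})) \to \pic(\Ho(L_1\mathcal{S})) \cong \zz$.

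It remains to check that this bijection is a group isomorphism, i.e. multiplicative. This is exactly where Theorem \ref{ganteriso} is used again: for invertible $X_1, X_2$, $\mathcal{R}(X_1 \smashprod^L_{\mathbb{P}\mathcal{I}} X_2) \cong \mathcal{R}(X_1) \wedge^L \mathcal{R}(X_2)$, naturally, so $\mathcal{R}$ respects the group operations on both sides (the monoidal products descend to the Picard groups). Therefore $\pic(D^1(\mathcal{A})) \cong \pic(\Ho(L_1\mathcal{S})) \cong \zz$.

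The main obstacle is the non-associativity of $\mathcal{R}$: as noted, $\mathcal{R}$ is \emph{not} a monoidal functor, and for $p = 3$ it is genuinely not associative, so one cannot simply invoke a monoidal-equivalence theorem. The resolution is that Picard groups only see isomorphism classes of invertible objects, and the multiplication is $2$-ary; Ganter's \emph{natural} binary isomorphism $\mathcal{R}(C \smashprod^L_{\mathbb{P}\mathcal{I}} D) \cong \mathcal{R}(C) \wedge^L \mathcal{R}(D)$ is all that is needed to see the induced map on Picard groups is a homomorphism, and associativity of that homomorphism is automatic from associativity of the two honest groups $\pic(D^1(\mathcal{A}))$ and $\pic(\Ho(L_1\mathcal{S}))$ — one never needs coherence of the isomorphisms across triple products. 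So care must be taken only to phrase the argument purely in terms of the binary product and isomorphism classes, never invoking associativity of $\smashprod^L_{\mathbb{P}\mathcal{I}}$ through $\mathcal{R}$ itself.
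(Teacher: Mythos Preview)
Your proposal is correct and uses essentially the same ingredients as the paper's proof: Ganter's isomorphism to transport invertibility across $\mathcal{R}$, the Hovey--Sadofsky computation of $\pic(\Ho(L_1\mathcal{S}))$, and the fact that $\mathcal{R}$ is an equivalence (hence reflects isomorphisms). The only cosmetic difference is that the paper argues by directly classifying the invertible objects in $D^1(\mathcal{A})$ as shifts $\mathbb{P}\mathcal{I}[i]$ and then reading off the group law $\mathbb{P}\mathcal{I}[i] \smashprod^L_{\mathbb{P}\mathcal{I}} \mathbb{P}\mathcal{I}[j] \cong \mathbb{P}\mathcal{I}[i+j]$, whereas you package the same steps as an abstract group isomorphism $\pic(D^1(\mathcal{A})) \to \pic(\Ho(L_1\mathcal{S}))$; your careful remark that only the binary Ganter isomorphism (not associativity coherence) is needed is a welcome clarification that the paper leaves implicit.
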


\begin{proof}
Let $C$ and $D$ be an inverse pair of quasi-periodic chain complexes in
$D^1(\mathcal{A})$, so
$$C \smashprod^L_{\mathbb{P}\mathcal{I}} D \cong
\mathbb{P}\mathcal{I}.$$
Applying $\mathcal{R}$ to this equation and
using Ganter's theorem gives 
$$\mathcal{R}(C
\smashprod^L_{\mathbb{P}\mathcal{I}} D) \cong
\mathcal{R}(\mathbb{P}\mathcal{I}) \cong \mathcal{R}(C) \wedge^L
\mathcal{R}(D).$$ Furthermore, we know that $\mathcal{R}$ sends the
unit $\mathbb{P}\mathcal{I}$ to the $E(1)$-local sphere $L_1 S^0$.
This can be read off the natural isomorphism given in Franke's
theorem and the fact that $E(1)_*$ reflects isomorphisms. So we
arrive at the statement
$$L_1 S^0 \cong \mathcal{R}(C) \wedge^L \mathcal{R}(D).$$
This means that $\mathcal{R}(C)$ and  $\mathcal{R}(D)$ are in the
Picard group of $\Ho(L_1 \mathcal{S})$ and hence must be suspensions
of the $E(1)$-local sphere. Being an equivalence of triangulated
categories $\mathcal{R}$ reflects isomorphisms, so $C$ and $D$ are
shifts of $\mathbb{P}\mathcal{I}$. Since
$$\mathbb{P}\mathcal{I}[i]
\smashprod^L_{\mathbb{P}\mathcal{I}} \mathbb{P}\mathcal{I}[j] \cong
\mathbb{P}\mathcal{I}[i+j]$$
in $D^1(\mathcal{A})$, this completes
the proof of our theorem.
\end{proof}

As mentioned, the construction of Franke's functor $\mathcal{R}$ also extends to
the $E(n)$-local stable homotopy category for $n^2+n\leq 2p-2$, i.e.
there is an equivalence of triangulated categories
\[
\mathcal{R}: D^1(\mathcal{A}) \longrightarrow \Ho(L_n \mathcal{S})
\]
for some abelian category $\mathcal{A}$ that is equivalent to the
category of $E(n)_*E(n)$-comodules. But the monoidal behaviour
of $\mathcal{R}$ in this general case is not yet known.

In particular, Ganter's construction only works for $n=1$. It will be worth investigating whether our results about the properties of $C^1(\mathcal{A})$ give a more straightforward analogue of Theorem \ref{ganteriso}. 

Ganter's isomorphism for $n=1$ allows us to read off the Picard group of Franke's model in a simple way. However, it would be interesting to know if, particularly for higher $n$, this Picard group can be calculated more directly. We hope that considering these questions will lead to more insight into the existence of exotic models and hence understanding the structure of the $E(n)$-local stable homotopy categories.


\begin{thebibliography}{LMSM86}

\bibitem[Bek00]{Bek00}
T.~Beke.
\newblock Sheafifiable homotopy model categories.
\newblock {\em Math. Proc. Cambridge Philos. Soc.}, 129(3):447--475, 2000.

\bibitem[Bie07]{Bie07}
G.~Biedermann.
\newblock Interpolation categories for homology theories.
\newblock {\em J. Pure Appl. Algebra}, 208(2):497--530, 2007.

\bibitem[Bou85]{Bou85}
A.~K. Bousfield.
\newblock On the homotopy theory of {$K$}-local spectra at an odd prime.
\newblock {\em Amer. J. Math.}, 107(4):895--932, 1985.

\bibitem[BR05]{BakRic05}
A.~Baker and B.~Richter.
\newblock Invertible modules for commutative {$\mathbb{S}$}-algebras with
  residue fields.
\newblock {\em Manuscripta Math.}, 118(1):99--119, 2005.

\bibitem[CCW07]{ClaCroWhi07}
F.~Clarke, M.~Crossley, and S.~Whitehouse.
\newblock The discrete module category for the ring of {$K$}-theory operations.
\newblock {\em Topology}, 46(2):139--154, 2007.

\bibitem[CH02]{ChrHov02}
J.~D. Christensen and M.~Hovey.
\newblock Quillen model structures for relative homological algebra.
\newblock {\em Math. Proc. Cambridge Philos. Soc.}, 133(2):261--293, 2002.

\bibitem[Col06]{Col06}
Michael Cole.
\newblock Mixing model structures.
\newblock {\em Topology Appl.}, 153(7):1016--1032, 2006.

\bibitem[Fra96]{Fra96}
J.~Franke.
\newblock Uniqueness theorems for certain triangulated categories possessing an
  adams spectral sequence.
\newblock \url{www.math.uiuc.edu/K-theory/0139/}, 1996.

\bibitem[Gan07]{Gan07}
N.~Ganter.
\newblock Smash product of {${\mathscr{E}}(1)$}-local spectra at an odd prime.
\newblock {\em Cah. Topol. G\'eom. Diff\'er. Cat\'eg.}, 48(1):3--54, 2007.

\bibitem[Hir03]{hir03}
P.S. Hirschhorn.
\newblock {\em Model categories and their localizations}, volume~99 of {\em
  Mathematical Surveys and Monographs}.
\newblock American Mathematical Society, Providence, RI, 2003.

\bibitem[Hov99]{Hov99}
M.~Hovey.
\newblock {\em Model categories}, volume~63 of {\em Mathematical Surveys and
  Monographs}.
\newblock American Mathematical Society, Providence, RI, 1999.

\bibitem[Hov01]{Hov01}
M.~Hovey.
\newblock Model category structures on chain complexes of sheaves.
\newblock {\em Trans. Amer. Math. Soc.}, 353(6):2441--2457 (electronic), 2001.

\bibitem[Hov04]{Hov04}
M.~Hovey.
\newblock Homotopy theory of comodules over a {H}opf algebroid.
\newblock In {\em Homotopy theory: relations with algebraic geometry, group
  cohomology, and algebraic {$K$}-theory}, volume 346 of {\em Contemp. Math.},
  pages 261--304. Amer. Math. Soc., Providence, RI, 2004.

\bibitem[HPS97]{HovPalStr97}
M.~Hovey, J.H. Palmieri, and N.P. Strickland.
\newblock Axiomatic stable homotopy theory.
\newblock {\em Mem. Amer. Math. Soc.}, 128(610):x+114, 1997.

\bibitem[HS99]{HovSad99}
M.~Hovey and H.~Sadofsky.
\newblock Invertible spectra in the {$E(n)$}-local stable homotopy category.
\newblock {\em J. London Math. Soc. (2)}, 60(1):284--302, 1999.

\bibitem[LMSM86]{LewMaySte86}
L.~G. Lewis, Jr., J.~P. May, M.~Steinberger, and J.~E. McClure.
\newblock {\em Equivariant stable homotopy theory}, volume 1213 of {\em Lecture
  Notes in Mathematics}.
\newblock Springer-Verlag, Berlin, 1986.
\newblock With contributions by J. E. McClure.

\bibitem[Rav92]{Rav92}
D.C. Ravenel.
\newblock {\em Nilpotence and periodicity in stable homotopy theory}, volume
  128 of {\em Annals of Mathematics Studies}.
\newblock Princeton University Press, Princeton, NJ, 1992.
\newblock Appendix C by Jeff Smith.

\bibitem[Roi07]{Roi07}
C.~Roitzheim.
\newblock Rigidity and exotic models for the {$K$}-local stable homotopy
  category.
\newblock {\em Geom. Topol.}, 11:1855--1886, 2007.

\bibitem[Roi08]{Roi08}
C.~Roitzheim.
\newblock On the algebraic classification of {$K$}-local spectra.
\newblock {\em Homology, Homotopy Appl.}, 10(1):389--412, 2008.

\bibitem[Sch07]{Sch07b}
S.~Schwede.
\newblock An untitled book project about symmetric spectra.
\newblock \url{www.math.uni-bonn.de/people/schwede/SymSpec.pdf}, 2007.

\bibitem[SS00]{SchShi00}
S.~Schwede and B.~Shipley.
\newblock Algebras and modules in monoidal model categories.
\newblock {\em Proc. London Math. Soc. (3)}, 80(2):491--511, 2000.

\bibitem[Ste76]{Ste76}
B.~Stenstr{\"o}m.
\newblock The maximal ring of quotients of a generalized matrix ring.
\newblock In {\em Universale {A}lgebren und {T}heorie der {R}adikale}, pages
  65--67. Studien zur Algebra und ihre Anwendungen, Band 1. Akademie-Verlag,
  Berlin, 1976.

\end{thebibliography}
\end{document}